\newtheorem{theorem}{Theorem}
\newtheorem{corollary}[theorem]{Corollary}
\newtheorem{lemma}[theorem]{Lemma}
\newtheorem{proposition}[theorem]{Proposition}
\newtheorem{remark}[theorem]{Remark}
\newenvironment{proof}[1][Proof]{\noindent\textbf{#1.} }{\ \rule{0.5em}{0.5em}}
\begin{document}

\title{Asymptotic behaviour of first passage time distributions for L\'{e}vy
processes}
\author{R. A. Doney\thanks{Department of Mathematics, University of Manchester, Manchester M13 9PL, United Kingdom, email: ron.doney@manchester.ac.uk}\ and V. Rivero\thanks{Centro de Investigacion en Matematicas A.C., Calle Jalisco s/n, 36240 Guanajuato, Mexico, email: rivero@cimat.mx}}
\maketitle
\abstract
Let $X$ be a real valued L\'{e}vy process that is in the domain of attraction of a stable law without centering with norming function $c.$ As an analogue of the random walk results in \cite{vw} and \cite{rad} we study the local behaviour of the distribution of the lifetime $\zeta$ under the characteristic measure $\underline{n}$ of excursions away from $0$ of the process $X$ reflected in its past infimum, and of the first passage time of $X$ below $0,$ $T_{0}=\inf \{t>0:X_{t}<0\},$ under $\mathbb{P}_{x}(\cdot ),$ for $x>0,$ in two different regimes for $x,$ viz. $x=o(c(\cdot))$ and $x>D c(\cdot),$ for some $D>0.$ We sharpen our estimates by distinguishing between two types of path behaviour, viz. continuous passage at $T_{0}$ and discontinuous passage. In the way to prove our main results we establish some sharp local estimates for the entrance law of the excursion process associated to $X$ reflected in its past infimum. 

~\\
\noindent{{\bf Keywords}: L\'evy processes, first passage time distribution,  local limit theorems, fluctuation theory.}\\

\noindent{{\bf Mathematics subject classification}: 60G51, 60 G52, 60F99                                                                                                                                                                                                                                                                                                                                        }

\section{Introduction and main results}

Let $X$ be a real valued L\'{e}vy process with law $\mathbb{P}$ and
characteristics $(a,\sigma ,\Pi ).$ We are interested in the \textbf{local}
behaviour of the distribution of the first passage time of $X$ below $0,$
i.e. $T_{0}=\inf \{t>0:X_{t}<0\},$ under $\mathbb{P}_{x}(\cdot ),$ for $x>0.$
We start by investigating the existence of a density for this distribution,
but our main focus is on the asymptotic behaviour of this density, or when
it fails to exist, other local-limit type results, all of which are
analogues of results for random walks in \cite{rad}. We will assume
throughout that under $\mathbb{P}$ neither $X$ nor $-X$ is a subordinator; in the first case the problem has no sense, and in the second case a different approach in needed as our methods rely on the possibility of excursions above the minimum. In addition, since the results for compound Poison processes can be deduced directly from the random walk results in \cite{rad}, we will also assume that $\Pi (\mathbb{R)=}\infty
.$ If additionally $0$ is regular for the half-line $(-\infty ,0)$ under $\mathbb{P%
}$, (we abbreviate this to "$X$ is regular downwards") then $T_{0}\equiv 0$
under $\mathbb{P}_{0},$ so as an analogue of the random walk results in \cite%
{vw} we study the distribution of the lifetime $\zeta =\inf \{t>0:\epsilon
_{t}=0\}$ under the characteristic measure $\underline{n}$ of excursions
away from $0$ of the process reflected in its infimum.

It turns out that we need to distinguish between two types of path
behaviour, viz continuous passage at $T_{0}$ and discontinuous passage. It
is known that the first only has positive probability if $X$ "creeps
downwards" under $\mathbb{P},$ or equivalently the drift $d^{\ast }$ of the
downgoing ladder height process is positive. We start by showing that, on
the event of discontinuous passage, $T_{0}$ admits a density under $\mathbb{P%
}_{x},x>0,$ and a similar result holds in the excursion case. In particular,
the first passage time distribution is absolutely continuous in the case $%
d^{\ast }=0$. However, when $d^{\ast }>0,$ it can happen that on the event
of continuous passage the distribution of $T_{0}$ is singular with respect
to Lebesgue measure. We therefore need to formulate our results differently
in these two situations, and the proofs are also somewhat different.

For the asymptotic results which are the main topic of this paper, we assume
that $X$ is in the domain of attraction of a stable distribution without
centering, that is there exists a deterministic function $c:(0,\infty
)\rightarrow (0,\infty )$ such that%
\begin{equation}
\frac{X_{t}}{c(t)}\xrightarrow[]{\mathcal{D}}Y(1),\quad \text{as}\
t\rightarrow \infty ,  \label{1.0}
\end{equation}%
with $Y(1)$ a strictly stable random variable of parameter $0<\alpha \leq 2,$
and positivity parameter $\rho =\mathbb{P}(Y_{1}>0).$ In this case we will
use the notation $X\in D(\alpha ,\rho ),$ and put $\overline{\rho }=1-\rho .$
Hereafter $(Y_{t},t\geq 0)$ will denote an $\alpha $-stable L\'{e}vy process
with positivity parameter $\rho =\mathbb{P}(Y_{1}>0).$

It is well known that in this case the function $c$ is regularly varying at
infinity with index $1/\alpha $. \textbf{Throughout this paper we will use
the notation }$\eta =1/\alpha .$\textbf{\ }

It is also known that the bivariate downgoing ladder process $(\tau ^{\ast
},H^{\ast })$ is in the domain of attraction of a bivariate $(\overline{\rho 
},\alpha \overline{\rho })$ stable law, and since $\overline{\rho }(t)=%
\mathbb{P}(X_{t}<0)\rightarrow \overline{\rho },$ it follows from Spitzer's
formula that%
\begin{equation}
\underline{n}(\zeta >\cdot )\in RV(-\overline{\rho }),  \label{1.1}
\end{equation}%
where $RV(\beta )$ denotes the class of functions which are regularly
varying with index $\beta $ at $\infty .$ Our first concern is to obtain a
local version of (\ref{1.1}), but we need to consider separately the
contributions coming from continuous and discontinuous passage. So let $\nu
(x)=\mathbb{P}_{x}(C_{0})$ where%
\begin{equation*}
C_{0}=\{X(T_{0}-)=0\}
\end{equation*}%
is the event of crossing level $0$ continuously. Then it is known that many
processes have $\nu (x)\equiv 0,$ e.g. any stable process which is not
spectrally positive, and spectrally positive processes have $\nu (x)\equiv
1, $ provided they do not drift to $\infty .$ The case $0<\nu (x)<1,$ $x>0$
arises if and only if $H^{\ast }$ has a positive drift $d^{\ast }$ and in
this case there is a renewal density $u^{\ast },$ and $\nu (x)=d^{\ast
}u^{\ast }(x),$ so that $\lim_{x\downarrow 0}\nu (x)=1,$ and%
\begin{equation*}
\lim_{x\rightarrow \infty }\nu (x)=\frac{d^{\ast }}{m^{\ast }}:=q\in \lbrack
0,1)
\end{equation*}%
where $m^{\ast }=\mathbb{E}H_{1}^{\ast }=d^{\ast }+\int_{0}^{\infty }%
\overline{\mu }^{\ast }(x)dx.$ (See \cite{jbx}.) However, when $X\in
D(\alpha ,\rho ),$ $\mathbb{E}H_{1}^{\ast }=\infty $ whenever $\alpha 
\overline{\rho }<1;$ when $\alpha \overline{\rho }=1,$ $\mathbb{E}%
H_{1}^{\ast }$ can be finite or infinite, even though the limiting stable
process is spectrally positive. So, except in this special case, $q=0$ and possibly $\mathbb{P}_{_{x}}(C_{0})$
should be negligible.

The following result verifies this intuition, since, except in this special
case, because $c(\cdot )\in RV(\eta )$ and $\underline{n}(\zeta >\cdot )\in
RV(-\overline{\rho }),$ we have $1/c(t)=o(\underline{n}(\zeta >t)).$ In it
we write $p=1-q,$ $f$ for the density of $Y_{1}$ and $\underline{n}%
^{c}(t,\Delta ]$ and $\underline{n}^{d}(t,\Delta ]$ for $\underline{n}(\zeta
\in (t,t+\Delta ],\epsilon (\zeta -)=0)$ and $\underline{n}(\zeta \in
(t,t+\Delta ],\epsilon (\zeta -)>0),$ respectively. The quantity $f(0)$ plays an important role in our estimates, known expressions for it can be found in \cite{zolotarev} equation (2.2.11) or in \cite{sato} equations (14.30-33).

\begin{theorem}
\label{thm1} Suppose $X\in D(\alpha ,\rho )$ and fix any $\Delta _{0}>0:$
then uniformly for $\Delta \in (0,\Delta _{0}]$%
\begin{equation}
\lim_{t\rightarrow \infty }\frac{t\underline{n}(\zeta \in (t,t+\Delta ])}{%
\Delta \underline{n}(\zeta >t)}=\overline{\rho }.  \label{1.2}
\end{equation}%
More precisely, we have,

(i) Whenever $\Pi ((-\infty ,0))>0,$ $\exists h_{0}$ such that $\underline{n}%
^{d}(t,\Delta ]:=\int_{t}^{t+\Delta }h_{0}(s)ds,$ and 
\begin{equation}
\lim_{t\rightarrow \infty }\frac{th_{0}(t)}{\overline{\rho }\underline{n}%
(\zeta >t)}=p.  \label{1.4}
\end{equation}

(ii) When $d^{\ast }>0,$ 
\begin{equation}
\lim_{t\rightarrow \infty }\frac{tc(t)\underline{n}^{c}(t,\Delta ]}{%
\overline{\rho }\Delta }=f(0)d^{\ast },\text{ uniformly for }\Delta \in
(0,\Delta _{0}],  \label{1.3}
\end{equation}%
and in particular, if also $\alpha \overline{\rho }=1,$%
\begin{equation}
\lim_{t\rightarrow \infty }\frac{\underline{n}^{c}(t,\Delta ]}{\overline{%
\rho }\Delta \underline{n}(\zeta >t)}=q.  \label{1.3x}
\end{equation}

(iii) If $\Pi ((-\infty ,0))=0$, then $\underline{n}^{d}(t,\Delta ]\equiv
0, $ $q=1,$ and%
\begin{equation}
\lim_{t\rightarrow \infty }\frac{t\underline{n}^{c}(t,\Delta]}{\Delta 
\overline{\rho }\underline{n}(\zeta >t )}=1,\text{ uniformly for }\Delta
\in (0,\Delta _{0}].  \label{1.5}
\end{equation}
\end{theorem}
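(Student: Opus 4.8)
The plan is to obtain the three refined statements (i)–(iii) and then deduce the pooled statement (\ref{1.2}) by addition, since $\underline{n}(\zeta\in(t,t+\Delta]) = \underline{n}^c(t,\Delta]+\underline{n}^d(t,\Delta]$ and, by (\ref{1.1}), $\underline{n}(\zeta>t)\in RV(-\overline\rho)$ together with the monotone-density type argument gives $\underline{n}(\zeta\in(t,t+\Delta])\sim \overline\rho\Delta\,\underline{n}(\zeta>t)/t$ once the two pieces are controlled; the $f(0)d^*$ term in (ii) is $O(1/c(t))=o(\underline{n}(\zeta>t))$ except when $\alpha\overline\rho=1$, which is exactly why (\ref{1.2}) holds with the clean constant $\overline\rho$. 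So the real work is in (i) and (ii), and (iii) will follow by combining them with $p=0$, $q=1$ in the spectrally-positive-downward case.

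For part (i), I would first produce the density $h_0$ of the discontinuous part. The natural route is the compensation formula / master formula for the excursion process of $X$ reflected at its infimum: a discontinuous passage of the excursion below its starting point $0$ is created by a jump of $X$, so conditioning on the last ladder epoch before $\zeta$ and using the entrance law $\underline{n}_t(dy)$ of the excursion together with the L\'evy measure $\Pi$ restricted to $(-\infty,0)$, one writes $\underline{n}^d(t,\Delta]$ as a convolution of the entrance law with $\overline\Pi^-$ over the interval $(t,t+\Delta]$. This simultaneously exhibits the density $h_0(t)=\int_{(0,\infty)} \underline{n}_t(dy)\,\overline\Pi((-\infty,-y])$-type expression and reduces (\ref{1.4}) to the sharp local estimates for the entrance law promised in the abstract (``sharp local estimates for the entrance law of the excursion process''), which I would invoke as an already-established ingredient. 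Feeding the regular variation of $c$ and of $\underline{n}(\zeta>t)$, plus the domain-of-attraction convergence (\ref{1.0}) applied to the ladder structure, into that convolution yields the ratio $p$; the appearance of $p=1-q$ rather than $1$ comes from the fact that the total mass of the entrance law at time $t$ splits asymptotically into a ``drift/creeping'' fraction $q$ and a ``jump'' fraction $p$.

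For part (ii), the event $C_0$ of continuous passage forces creeping, so $\underline{n}^c(t,\Delta]$ is governed by the ladder-height renewal density $u^*$ and its drift $d^*$: one expresses $\underline{n}^c(t,\Delta]$ via the joint law of $(\tau^*,H^*)$ near the creeping value $H^*=0$, picking up the factor $d^*$ from $\nu(x)=d^*u^*(x)$ as $x\downarrow0$. The local limit theorem for the stable subordinator-type object $\tau^*$ (whose norming is the inverse of $c$, so that the relevant density at time $t$ behaves like $f(0)/c(t)$ up to the correct regularly-varying factor) then delivers (\ref{1.3}) with constant $f(0)d^*$, uniformly in $\Delta$ by the standard equicontinuity of stable densities. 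The specialization (\ref{1.3x}) is then immediate: when $\alpha\overline\rho=1$ one has $\underline{n}(\zeta>t)\asymp 1/c(t)$ up to constants, and matching constants (using $m^*=d^*+\int\overline\mu^*$ and $q=d^*/m^*$) converts $f(0)d^*$ into $q$.

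The main obstacle I anticipate is establishing, and then correctly plugging in, the sharp local estimate for the excursion entrance law $\underline{n}_t(dy)$ — in particular getting the constant right and getting \emph{uniformity in $\Delta$} — because this is where the domain-of-attraction hypothesis does its real work and where the split into the $p$- and $q$-fractions is actually justified; everything downstream (the convolutions against $\Pi$ and against $u^*$, and the regular-variation bookkeeping) is comparatively routine Karamata/Potter-bound manipulation. A secondary delicate point is handling the boundary case $\alpha\overline\rho=1$ consistently, since there the ``negligible'' creeping term is in fact of the same order as $\underline{n}(\zeta>t)$, so one must track it rather than discard it; I would treat that case by a separate short argument comparing the two regularly-varying-of-index-$(-1)$-up-to-slowly-varying quantities directly.
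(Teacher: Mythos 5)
Your plan for the discontinuous part is essentially the paper's: Lemma \ref{lem1} obtains $h_0(t)=\underline{n}(\overline{\Pi}^{\ast}(\epsilon_t),t<\zeta)$ by the compensation formula, and the asymptotics (\ref{1.4}) then come from the sharp local entrance-law estimate near $0$ (Proposition \ref{prop2}) together with the weak convergence of $\underline{n}(\epsilon_t\in c(t)dy\,|\,\zeta>t)$ to the meander law for the bulk; likewise the reduction of (\ref{1.2}) to (i)--(iii) by addition is correct. But there are two genuine gaps. First, for part (ii) you assert that $\underline{n}^{c}(t,\Delta]$ can be read off from ``the joint law of $(\tau^{\ast},H^{\ast})$ near $H^{\ast}=0$'' plus ``a local limit theorem for $\tau^{\ast}$'', with the constant $f(0)d^{\ast}$ falling out. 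No such identity is written down, and the constant cannot come from $\tau^{\ast}$ alone: a local limit theorem for $\tau^{\ast}$ would produce a one-sided stable density of index $\overline{\rho}$, not $f(0)$, the value at $0$ of the density of the two-sided limit $Y_1$. The paper has to work considerably harder here: it uses the Griffin--Maller identity $\mathbb{P}_x^{c}(T\leq t)=d^{\ast}\,\partial_x W^{\ast}(t,x)$ to control creeping from $x>0$ (Theorem \ref{S}), then decomposes $\underline{n}^{c}(\zeta\in(2t,2t+\Delta])=\int\underline{n}_t(dy)\,\mathbb{P}_y^{c}(T\in(t,t+\Delta])$ by the Markov property at the excursion's midpoint, and the constant $f(0)$ only emerges from the identity $\int_0^{\infty}q_s(z)q^{\ast}_{t-s}(z)\,dz=t^{-(1+\eta)}f(0)$ of Lemma \ref{lem:29}, which rests on the uniform distribution of the time of the supremum of a stable bridge. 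None of this is recoverable from your sketch.

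Second, the identification of the constants $p$ and $q$ is more entangled than you indicate. When $\alpha\overline{\rho}<1$ one has $p=1$ and there is nothing to ``split''; when $\alpha\overline{\rho}=1$ the value $p=A^{\ast}(\infty)/(d^{\ast}+A^{\ast}(\infty))$ is not a property of the mass of the entrance law, but is obtained only after both tails $\underline{n}^{c}(\zeta>t)\sim f(0)d^{\ast}/(\overline{\rho}c(t))$ and $\underline{n}^{d}(\zeta>t)\sim f(0)A^{\ast}(\infty)/(\overline{\rho}c(t))$ have been computed and summed (Remark \ref{EH}); the second of these requires the comparison $B(x)\sim A^{\ast}(x)$ of Lemma \ref{L} and the slowly varying truncation $L(t)=B(\delta_t c(t))$ of Proposition \ref{K}. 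So parts (i) and (ii) cannot be proved independently and then combined: in the boundary case the proof of (i) needs the output of (ii). Your worry about $\alpha\overline{\rho}=1$ is well placed, but the fix is not a short separate comparison of two regularly varying quantities; it is the bulk of the work.
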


For the case $x>0,$ we state here only the analogue of (\ref{1.2}), but in
the sequel we will also state and prove results analogous to (\ref{1.4}), (%
\ref{1.3}), (\ref{1.3x}) and (\ref{1.5}).

We write $U^{\ast }$ for the renewal measure of $H^{\ast }$ and $\tilde{h}%
_{x}(\cdot )$ for the density of the first passage time to $(-\infty ,0)$ of 
$Y$ starting from $x>0.$

\begin{remark}
From now on, the phrase "uniformly in $\Delta "$ will be used as an
abbreviation for "uniformly in $\Delta \in (0,\Delta _{0}]$ for any fixed $%
\Delta _{0}>0"$.
\end{remark}

\begin{theorem}
\label{thm2}Uniformly in $\Delta $ and $x>0$ 
\begin{equation}
\frac{t\mathbb{P}_{x}(T_{0}\in (t,t+\Delta ])}{\Delta }=\tilde{h}%
_{x_{t}}(1)+o(1)\text{ as }t\rightarrow \infty .  \label{1.6}
\end{equation}%
Also, uniformly in $\Delta $ and $x>0$ such that $x_{t}:=x/c(t)\rightarrow
0, $%
\begin{equation}
\lim_{t\rightarrow \infty }\frac{t\mathbb{P}_{x}(T_{0}\in (t,t+\Delta ])}{%
\Delta U^{\ast }(x)\underline{n}(\zeta >t)}=\overline{\rho }.  \label{1.7}
\end{equation}
\end{theorem}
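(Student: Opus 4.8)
The plan is to prove \eqref{1.6} first and to read off \eqref{1.7} from the analysis of the regime $x_{t}\to 0$. For \eqref{1.6} I would reduce the uniform statement to the following assertion: \emph{for every $t_{n}\uparrow\infty$ and every $x_{n}>0$ with $x_{n}/c(t_{n})\to a\in[0,\infty]$, one has $t_{n}\mathbb{P}_{x_{n}}(T_{0}\in(t_{n},t_{n}+\Delta])/\Delta\to\tilde h_{a}(1)$ uniformly in $\Delta\in(0,\Delta_{0}]$}, where we set $\tilde h_{a}(1):=0$ for $a\in\{0,\infty\}$; under the standing hypotheses $a\mapsto\tilde h_{a}(1)$ is then bounded and continuous on the compactification $[0,\infty]$ (at the endpoints this uses that $0$ is regular downwards for $Y$ and that $Y$ does not drift to $+\infty$). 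Granting the assertion, a routine subsequence argument gives \eqref{1.6}: if it failed there would be $t_{n},x_{n},\Delta_{n}$ along which the difference stays bounded away from $0$; pass to a subsequence with $x_{n}/c(t_{n})\to a$ and $\Delta_{n}\to\Delta_{\ast}$, and invoke the assertion (which is uniform in $\Delta$) together with continuity of $a\mapsto\tilde h_{a}(1)$.

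The analytic backbone is the fluctuation decomposition of $\mathbb{P}_{x}(T_{0}\in dt)$ at the last ladder epoch preceding the excursion on which $0$ is crossed. Writing $(\tau^{\ast},H^{\ast})$ for the bivariate downgoing ladder process and $\mathcal{V}^{\ast}$ for its bivariate renewal measure, the compensation formula for the Poisson point process of excursions of $X-\underline{X}$ away from $0$ gives, for the discontinuous-passage part,
\[
\mathbb{P}_{x}(T_{0}\in(t,t+\Delta],\,C_{0}^{c})=\int_{[0,x)}\!\int_{[0,\infty)}\mathcal{V}^{\ast}(dh,ds)\;\underline{n}\big(s+\zeta\in(t,t+\Delta],\ |\epsilon(\zeta)|>x-h\big),
\]
the two constraints $h<x$ and $|\epsilon(\zeta)|>x-h$ forcing the displayed excursion to be precisely the fatal one; the continuous-passage part $\mathbb{P}_{x}(T_{0}\in(t,t+\Delta],C_{0})$ vanishes unless $d^{\ast}>0$, and when it does not it is expressed through the creeping density of $\mathcal{V}^{\ast}$ at height $x$. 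Applying the Markov property of an excursion at its lifetime,
\[
\underline{n}\big(\zeta\in dv,\ |\epsilon(\zeta)|>z\big)=dv\int_{(0,\infty)}n_{v}(d\xi)\,\Pi\big((-\infty,-\xi-z)\big)\qquad(z>0),
\]
with $n_{v}$ the entrance law of the excursion process; so the sharp local estimates for $n_{v}$ established earlier in the paper (and the resulting refinement of Theorem~\ref{thm1}) enter exactly here. The same decomposition applied to $Y$ yields the corresponding representation of $\tilde h_{a}$.

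To pass to the limit one rescales space by $c(t)$ and time by $t$; under $X\in D(\alpha,\rho)$ the rescaled ladder process converges to its stable counterpart and the rescaled excursion measure converges, suitably normalised, to the stable excursion measure, and then, with the entrance-law estimates, every ingredient of the decomposition converges. For $a\in(0,\infty)$ this is a local limit theorem: the pre-fatal factor $\mathcal{V}^{\ast}$ converges weakly, the fatal-excursion factor $\underline{n}(s+\zeta\in(t,t+\Delta],|\epsilon(\zeta)|>x-h)$ is, for $x-h>0$, absolutely continuous in the time variable with density dominated by the function $h_{0}$ of Theorem~\ref{thm1}(i), and that density converges; a dominated-convergence argument then gives $t\mathbb{P}_{x}(T_{0}\in(t,t+\Delta])/\Delta\to\tilde h_{a}(1)$ uniformly in $\Delta$, the creeping contribution being negligible unless $\alpha\overline{\rho}=1$, where it supplies the analogue of the term $q$ of \eqref{1.3x}. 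For $a=\infty$ both sides tend to $0$ uniformly: here one uses that the increment $\mathbb{P}_{x}(T_{0}\in(t,t+\Delta])=\mathbb{P}(-\underline{X}_{t+\Delta}>x)-\mathbb{P}(-\underline{X}_{t}>x)$ admits a one-big-jump bound $\lesssim\Delta\,\Pi((-\infty,-x))$ (and an even faster rate if $\Pi((-\infty,0))=0$), so that $t\mathbb{P}_{x}(T_{0}\in(t,t+\Delta])/\Delta\lesssim t\,\Pi((-\infty,-x))\to 0$ because $\Pi((-\infty,\cdot])$ and $c$ are regularly varying and $t\,\Pi((-\infty,-c(t)))$ stays bounded. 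For $a=0$ — the Lévy analogue of the random-walk regime of \cite{vw} — the ladder height need only reach level $x=o(c(t))$, costing a factor $U^{\ast}(x)$, whereas the fatal excursion must be atypically long, costing $\underline{n}(\zeta>t)$; making this precise through the decomposition and the entrance-law estimates gives $t\mathbb{P}_{x}(T_{0}\in(t,t+\Delta])/\Delta\sim\overline{\rho}\,U^{\ast}(x)\underline{n}(\zeta>t)$, which is \eqref{1.7}, and, the right-hand side being $o(1)$, also the instance $a=0$ of \eqref{1.6}.

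I expect the main obstacle to be this localisation step: upgrading the weak convergence of $T_{0}/t$ at the point $1$ to a density limit, uniformly over the vanishing windows $\Delta/t$ \emph{and} uniformly in the starting point through $a=\lim x_{n}/c(t_{n})$. The sharp entrance-law estimates are precisely what supplies the domination and convergence of the fatal-excursion density needed here, and these must hold uniformly as the relevant level — the remaining distance $x-h$ to $0$ — ranges over the whole scale set by $c(t)$. A second delicate point is the continuous-passage contribution when $d^{\ast}>0$: the law of $T_{0}$ may then carry a singular component, so that part must be handled through the density of the bivariate ladder renewal measure rather than through $\underline{n}$, and one must check it is asymptotically negligible except in the borderline case $\alpha\overline{\rho}=1$.
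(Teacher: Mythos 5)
Your outline follows essentially the same route as the paper: your decomposition of $\mathbb{P}_{x}(T_{0}\in dt,\,C_{0}^{c})$ at the last downgoing ladder epoch is precisely the identity $h_{x}(t)=\int_{0}^{t}\int_{0}^{x}W^{\ast}(ds,x-dy)\,\theta(t-s,y)+a^{\ast}\chi(t,x)$ used in the text, with $\theta(t,y)=\underline{n}(\overline{\Pi}^{\ast}(y+\epsilon_{t}),\zeta>t)$ playing the role of your fatal-excursion density; the creeping part is handled, as you propose, through $\mathbb{P}^{c}_{x}(T_{0}\leq t)=d^{\ast}\partial_{x}W^{\ast}(t,x)$ (Theorem \ref{S}); and the limits are extracted exactly as you describe, by rescaling by $c(t)$ and $t$ and feeding in the entrance-law estimates of Propositions \ref{prop2} and \ref{prop1} (via Propositions \ref{Z} and \ref{K} and Lemma \ref{CD}). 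The reduction to the regimes $x_{t}\to a\in[0,\infty]$ is also the paper's. One small omission: when $X$ is irregular upwards your bivariate renewal measure must be supplemented by the term $a^{\ast}\chi(t,x)$ (passage by a jump made while sitting at the running infimum); the paper needs Lemma \ref{drift} to control it.

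The one step that would fail as written is the regime $a=\infty$. The claimed bound $\mathbb{P}_{x}(T_{0}\in(t,t+\Delta])\lesssim\Delta\,\Pi((-\infty,-x))$ is not true: on $C_{0}^{c}$ the density of $T_{0}$ is $h_{x}(v)=\mathbb{E}_{x}\bigl(\overline{\Pi}^{\ast}(X_{v});v<T_{0}\bigr)$, and the jump that carries the path below $0$ need only exceed $X_{T_{0}-}$, which can be arbitrarily small even when $x\gg c(t)$ --- the path may first descend to a neighbourhood of $0$ and then cross with a small jump, a contribution governed by $\overline{\Pi}^{\ast}$ near $0$ and not by $\overline{\Pi}^{\ast}(x)$; moreover when $d^{\ast}>0$ and $\Pi((-\infty,0))>0$ the creeping contribution is not controlled by $\Pi((-\infty,-x))$ at all. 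So establishing $t\,\mathbb{P}_{x}(T_{0}\in(t,t+\Delta])/\Delta\to 0$ uniformly for $x/c(t)\to\infty$ cannot be done by a single deterministic tail bound; it requires running the same ladder decomposition in this regime, e.g.\ using that $\theta(t-s,y)$ is monotone in $y$ together with the uniform-in-$y$ asymptotics of Proposition \ref{Z} (where $\phi(y/c(t-s))\to 0$ for $y\gg c(t-s)$) and a separate control of the mass of $W^{\ast}(ds,x-dy)$ near $y=0$, plus the corresponding estimate for the creeping density. Everything else in your plan lines up with the paper's argument.
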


\begin{remark}
If $x_{t}\rightarrow 0$ or $\infty ,$ $\tilde{h}_{x_{t}}(1)\rightarrow 0$
and the RHS of (\ref{1.6}) is $o(1),$ so it is sufficient to show that for
any $D>1$ (\ref{1.6}) holds uniformly in $\Delta $ and $x$ such that $%
x_{t}\in \lbrack D^{-1},D].$
\end{remark}

\begin{remark}
In Lemma \ref{L3} in the next section we will see that if $x_{t}\rightarrow
0 $ then $U^{\ast }(x)\underline{n}(\zeta >t)\rightarrow 0$ and hence the
estimate (\ref{1.7}) is more precise than (\ref{1.6}) in the "small $x_{t}"$
situation.
\end{remark}

We finish this section by stating two propositions which play an important
part in the proof of each of the above results.

Write $U$ for the renewal measure in the upgoing ladder height process $H,$ $%
f$ \ for the density of $Y_{1},$ and $g$  for the probability density
function of the $(\alpha ,\rho )$-stable meander of length $1$ at time $1$.

\begin{proposition}
\label{prop2}Assume that $X\in D(\alpha ,\rho ).$ Then uniformly in $\Delta $
and $y\geq 0$ such that $y_{t}:=y/c(t)\rightarrow 0,$%
\begin{equation}
tc(t)\underline{n}\left( \epsilon _{t}\in (y,y+\Delta ),\zeta >t\right)
\backsim f(0)\int_{y}^{y+\Delta }U(z)dz\text{ as }t\rightarrow \infty .
\label{1.9}
\end{equation}
\end{proposition}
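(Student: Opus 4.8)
The plan is to represent the entrance law of $\underline n$ through the law of $X$ killed on first entering $(-\infty ,0)$, to split the killed path at a time $(1-\delta )t$ with $\delta $ small, to extract from the short terminal piece the factor $\int _y^{y+\Delta }U(z)\,dz$ by a duality argument, and to evaluate the remaining factor by means of the scaling limit (\ref{1.0}). I would begin from the excursion identity
\begin{equation*}
\underline n(\epsilon _t\in dy,\zeta >t)=\lim _{x\downarrow 0}\frac{1}{U^{\ast }(x)}\,\mathbb P_x\!\left( X_t\in dy,\ T_0>t\right) ,
\end{equation*}
where $U^{\ast }(x)$ denotes the renewal function of the downgoing ladder height $H^{\ast }$ (so that $U^{\ast }(y)\,\underline n(\epsilon _t\in dy,\zeta >t)$ is the time-$t$ entrance law of $X$ conditioned to stay positive), with the usual modification by the drift $d^{\ast }$ in the creeping case handled exactly as in Theorem \ref{thm1}; its integrated form $\int _0^{\infty }\underline n(\epsilon _t\in dy,\zeta >t)\,dt=U(dy)$ fixes the normalisation and already displays $U$.

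Applying the Markov property at time $(1-\delta )t$ gives
\begin{equation*}
\underline n\big( \epsilon _t\in (y,y+\Delta ),\zeta >t\big) =\int _{(0,\infty )}\underline n\big( \epsilon _{(1-\delta )t}\in dz,\zeta >(1-\delta )t\big) \,\mathbb P_z\!\left( X_{\delta t}\in (y,y+\Delta ),\ T_0>\delta t\right) .
\end{equation*}
Since the semigroups of $X$ and $\widehat X=-X$ killed below $0$ are in duality with respect to Lebesgue measure on $(0,\infty )$, the kernel $\mathbb P_z(X_{\delta t}\in dw,\,T_0>\delta t)$ is a killed transition density of $\widehat X$ read at $(w,z)$; and as $w\le y+\Delta $ with $y=o(c(t))$, the boundary expansion that the identity of the previous paragraph provides for $\widehat X$,
\begin{equation*}
\frac{\mathbb P_w(\widehat X_{\delta t}\in dz,\,\widehat T_0>\delta t)}{dz}=U(w)\,\frac{\widehat{\underline n}(\widehat\epsilon _{\delta t}\in dz,\,\widehat\zeta >\delta t)}{dz}+o\big( U(w)\big) \qquad (w\downarrow 0),
\end{equation*}
is applicable, $U$ now arising as the renewal function of the downgoing ladder of $\widehat X$, i.e.\ of the upgoing ladder of $X$. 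Integrating $w$ over $(y,y+\Delta )$ produces the factor $\int _y^{y+\Delta }U(z)\,dz$, whence, for $y=o(c(t))$,
\begin{equation*}
\underline n\big( \epsilon _t\in (y,y+\Delta ),\zeta >t\big) \sim \Big( \int _y^{y+\Delta }U(z)\,dz\Big) \,I_\delta (t),\qquad I_\delta (t)=\int _0^{\infty }\frac{\underline n(\epsilon _{(1-\delta )t}\in dz,\zeta >(1-\delta )t)}{dz}\cdot \frac{\widehat{\underline n}(\widehat\epsilon _{\delta t}\in dz,\widehat\zeta >\delta t)}{dz}\,dz .
\end{equation*}

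It then remains to show $tc(t)\,I_\delta (t)\to f(0)$ as $t\to\infty $ and $\delta \downarrow 0$. Both entrance densities appearing in $I_\delta (t)$ concentrate on scale $c(t)$, and by (\ref{1.0}), with the limiting laws recognised as the $(\alpha ,\rho )$- and $(\alpha ,\overline\rho )$-stable meanders, one has $\underline n(\epsilon _s\in dz,\zeta >s)\sim \underline n(\zeta >s)\,c(s)^{-1}g(z/c(s))\,dz$ and the analogue for $\widehat{\underline n}$ with the dual meander density $\widehat g$. Substituting this, and using that $c(\delta t)/c((1-\delta )t)\to (\delta /(1-\delta ))^{\eta }$, that $\underline n(\zeta >(1-\delta )t)\,\widehat{\underline n}(\widehat\zeta >\delta t)$ is, up to a constant, equal to $\delta ^{-\rho }t^{-1}(1+o(1))$ (here $\overline\rho +\rho =1$, and the slowly varying factors cancel by the Wiener--Hopf identity for the ladder-time subordinators), and that $g(u)\sim \mathrm{const}\cdot u^{\alpha \rho }$ as $u\downarrow 0$ while $\int _0^{\infty }u^{\alpha \rho }\widehat g(u)\,du<\infty $, one checks that $tc(t)\,I_\delta (t)$ tends to a $\delta $-free constant; identifying that constant with $f(0)$ is a direct computation for the stable process using the known expression for $f(0)$ recalled after Theorem \ref{thm1}, and this establishes (\ref{1.9}) pointwise. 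Uniformity in $\Delta \in (0,\Delta _0]$ and in $y$ with $y/c(t)\to 0$ then follows by a sandwich argument: monotonicity of $\Delta \mapsto \underline n(\epsilon _t\in (y,y+\Delta ),\zeta >t)$ and of $z\mapsto \int _0^{z}U$, reduction to a finite grid of values of $\Delta $, and Potter bounds for the regularly varying functions $U$ and $c$.

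The hard part will be making the two middle steps quantitative and uniform rather than merely pointwise: controlling the $o(U(w))$ boundary error uniformly for $w=o(c(t))$ and uniformly in $z$ over all of $(0,\infty )$ (not only where the main mass lies), bounding the error in replacing the two excursion entrance laws by their meander approximations inside $I_\delta (t)$, interchanging the limits $t\to\infty $ and $\delta \downarrow 0$ with summable errors, and checking that the constants collapse to exactly $f(0)$. The continuous-versus-discontinuous passage dichotomy enters here through whether the atom $U(\{0\})$ vanishes --- equivalently whether the killed density of $\widehat X$ has a positive or a null boundary limit --- and is dealt with along the lines of Theorem \ref{thm1}; obtaining the sharp, uniform control of the entrance laws that all of this needs is the principal preliminary work alluded to in the Introduction.
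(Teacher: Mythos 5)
Your outline gets the constants right---the bookkeeping with $c(\delta t)$, $c((1-\delta)t)$, the tails of $\zeta$ under $\underline{n}$ and $\overline{n}$, and the identity $\int_0^\infty q_{1-\delta}(z)q^{\ast}_{\delta}(z)\,dz=f(0)$ of Lemma \ref{lem:29} do collapse $tc(t)I_\delta(t)$ to $f(0)$---but the argument hinges on an ingredient that is not proved and is of at least the same depth as the proposition itself: the boundary expansion $\mathbb{P}^{\ast}_w(X^{\ast}_{\delta t}\in dz,\,T^{\ast}_0>\delta t)=U(w)\,\overline{n}_{\delta t}(dz)+o(U(w))$, needed with a multiplicative error, uniformly in $z$ over all of $(0,\infty)$ and uniformly for $w$ up to $o(c(\delta t))$, not merely $w\downarrow 0$ at fixed time. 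In the paper's logical architecture, statements of exactly this type (the entrance-law estimates of Propositions \ref{prop2} and \ref{prop1}, and the first-passage asymptotics of Theorems \ref{E}, \ref{G}, \ref{H}) are \emph{consequences} of Proposition \ref{prop2}, not inputs to it; unless you give an independent proof of the expansion, the argument is circular, and any such proof would have to carry out essentially the computation the paper performs anyway. The paper sidesteps this entirely: it starts from the exact Alili--Chaumont identity (\ref{DA}), splits the time integral at $\delta t$, extracts the factor $U$ from the exact relation $\int_0^\infty\underline{n}_u(dz)\,du=U(dz)-a^{\ast}\delta_0(dz)$ of Lemma \ref{LC}, obtains $f(0)$ from Stone's local limit theorem (Proposition \ref{local}) applied to the \emph{free} process at the large time $t-u$, and controls all remainders with the a priori bound of Lemma \ref{A}, which is proved first by elementary means. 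Your scheme is in fact the one the paper uses later for Theorem \ref{R}, where it is legitimate precisely because Propositions \ref{prop2} and \ref{prop1} are by then available.

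A second, independent defect is that you manipulate $\underline{n}_s(dz)/dz$ and $\overline{n}_s(dz)/dz$ as genuine densities. For a general $X\in D(\alpha,\rho)$ neither the one-dimensional laws of $X$ nor the entrance laws under $\underline{n},\overline{n}$ need be absolutely continuous (the paper recalls Orey's examples and the compound-Poisson-minus-drift example for exactly this reason), so the integral $I_\delta(t)$ is not well defined as written; this is why every estimate in the paper is stated for intervals $(y,y+\Delta]$ via a Stone-type local limit theorem, and why continuous and discontinuous passage must be treated separately. Relatedly, your normalisation $\int_0^\infty\underline{n}_t(dy)\,dt=U(dy)$ drops the atom $a^{\ast}\delta_0$ present when $X$ is irregular upwards; the corresponding term $a^{\ast}\mathbb{P}(X_t\in(x,x+\Delta])$ contributes at the same order $f(0)\Delta/c(t)$ and has to be carried through, as in (\ref{*}).
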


\begin{proposition}
\label{prop1}Assume that $X\in D(\alpha ,\rho ).$ Then uniformly in $\Delta $
and $y\geq 0,$ 
\begin{equation}
c(t)\underline{n}\left( \epsilon _{t}\in (y,y+\Delta )|\zeta >t\right)
=\Delta \left( g\left( y_{t}\right) +o(1)\right) \text{ as }t\rightarrow
\infty .  \label{1.8}
\end{equation}
\end{proposition}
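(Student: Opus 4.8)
The plan is to adapt to the L\'evy setting the random-walk strategy of \cite{vw}, whose first move is to trade the excursion measure for ladder variables. By a standard fluctuation identity (a Wiener--Hopf, or Maisonneuve exit formula, computation), the entrance law of $\underline{n}$ is, up to a constant $k_{0}$, the bivariate renewal measure of the ascending ladder process $(\tau ,H)$ of $X$:
\begin{equation*}
\underline{n}(\epsilon _{t}\in dy,\zeta >t)\,dt\;=\;k_{0}\,\mathcal{U}(dt,dy),\qquad \mathcal{U}(dt,dy):=\int_{0}^{\infty }\mathbb{P}\big(\tau _{s}\in dt,\,H_{s}\in dy\big)\,ds
\end{equation*}
(equivalently, in Laplace-transformed form, $\int_{0}^{\infty }e^{-qt}\underline{n}(\epsilon _{t}\in dy,\zeta >t)\,dt=k_{0}\int_{0}^{\infty }\mathbb{E}[e^{-q\tau _{s}};H_{s}\in dy]\,ds$). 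The $y$-marginal of $\mathcal{U}$ is the renewal measure $U$ of Proposition \ref{prop2}, and its $t$-marginal is $k_{0}^{-1}\underline{n}(\zeta >t)\,dt$, so that (\ref{1.8}) becomes the statement that the conditional law of the ladder height $H$, given that the ladder time $\tau $ equals $t$, rescaled by $c(t)$, has a density converging to $g$, uniformly.

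Since $X\in D(\alpha ,\rho )$, the bivariate subordinator $(\tau ,H)$ lies in the domain of attraction of a bivariate $(\rho ,\alpha \rho )$-stable law, with norming functions $a\in RV(1/\rho )$ for $\tau $ and $b=c\circ a\in RV(\eta /\rho )$ for $H$ (the height built up by ladder time $\approx a(s)$ is of the order of $X$ at real time $a(s)$, namely $c(a(s))$). The heart of the proof is then a bivariate local renewal theorem for $\mathcal{U}$: combining the local limit theorem for $(\tau ,H)$ with the renewal and Tauberian estimates that are the continuous-time counterparts of those in \cite{vw}, one gets
\begin{equation*}
c(t)\,\frac{\mathcal{U}\big(dt,(y,y+\Delta )\big)}{\mathcal{U}\big(dt,(0,\infty )\big)}\;\longrightarrow \;\Delta \,\psi (y_{t})\qquad (t\rightarrow \infty ),
\end{equation*}
uniformly for $\Delta \in (0,\Delta _{0}]$ and $y_{t}$ in compact subsets of $(0,\infty )$, for some probability density $\psi $. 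To identify $\psi =g$ one applies this to the stable process $Y$ itself: there $c(t)=t^{\eta }$ and, by scaling and the very definition of the stable meander, $\underline{n}_{Y}(\epsilon _{t}\in \cdot |\zeta >t)$ is the law of $t^{\eta }$ times the $(\alpha ,\rho )$-stable meander of length $1$ at time $1$, whose density is $g$. (Alternatively $\psi =g$ follows from the weak convergence of $\epsilon _{t}/c(t)$ under $\underline{n}(\cdot |\zeta >t)$ to that meander --- the invariance principle for L\'evy meanders --- together with the tightness supplied by the tail bound below.)

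It remains to promote this to uniformity over all $y\geq 0$. For $y_{t}$ bounded away from $0$ and $\infty $ this is the previous step, and uniformity in $\Delta \in (0,\Delta _{0}]$ is built into the local limit theorem, whose spatial uniformity is over intervals of bounded length. For $y_{t}\rightarrow 0$ one reconciles the estimate with Proposition \ref{prop2} via the boundary behaviour $g(v)\sim c_{\#}\,v^{\alpha \rho }$ as $v\downarrow 0$, itself consistent with $U\in RV(\alpha \rho )$ and the renewal estimate $U(c(t))\asymp t\,\underline{n}(\zeta >t)$. For $y_{t}\rightarrow \infty $ both sides are small, and it is enough to have a uniform tail bound $c(t)\,\underline{n}(\epsilon _{t}>Mc(t)|\zeta >t)\leq \varepsilon (M)$ with $\varepsilon (M)\rightarrow 0$, coming from the tail of $H$ and the local limit theorem, together with the polynomial tail of $g$.

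The step I expect to be the main obstacle is the bivariate local renewal theorem with spatial uniformity: carrying it out for a bivariate subordinator whose two coordinates have different indices and are dependent, passing from a local limit theorem for $(\tau ,H)$ to one for its potential measure $\mathcal{U}$ uniformly in the spatial variable, and tracking normalisations carefully enough to obtain exactly the meander density $g$ --- not an unidentified constant multiple of it --- in the limit.
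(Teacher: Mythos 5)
Your reduction of (\ref{1.8}) to a statement about the bivariate renewal measure of the ascending ladder process is legitimate --- it is exactly the duality (\ref{W*}) of Lemma \ref{LC} (modulo the atom $a^{\ast}\delta_{\{(0,0)\}}$, which you should not drop, since it is what produces the extra term $a^{\ast}\mathbb{P}(X_{t}\in(x,x+\Delta])$ that has to be estimated separately). But the proof then rests entirely on a ``bivariate local renewal theorem'' for $\mathcal{U}$ that you assert rather than prove, and which you yourself flag as the main obstacle. That is not a gap one can wave past in this setting: the statement you need is \emph{local in the time variable} (you need the density in $t$ of $\mathcal{U}(dt,(y,y+\Delta))$, since the left side of (\ref{1.8}) is $\underline{n}_{t}((y,y+\Delta))$ for a fixed $t$, not an average over a time window), and in continuous time there is no a priori regularity of the law of $\tau_{s}$ to support a local limit theorem in that coordinate --- the very existence of the density in $t$ comes from excursion theory, not from the ladder process. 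Even granting such a joint local limit theorem for $(\tau_{s},H_{s})$, one must integrate it over $s$ and control the contributions from small, intermediate and large $s$ uniformly in $y\geq 0$; that is where essentially all of the work lies, and none of it is done. Finally, the identification of the limit density as exactly $g$ (with constant $1$) is deferred to ``apply the result to $Y$'', which presupposes that the limit in your renewal theorem is universal in $(\alpha,\rho)$ with the correct normalisation --- again something that would have to be extracted from the unproved step.

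For comparison, the paper circumvents precisely this difficulty by using the Alili--Chaumont identity (\ref{DA}), which writes $t\underline{n}_{t}(dx)$ as a time-convolution of the entrance law $\underline{n}_{s}$ with the \emph{unrestricted} semigroup $\mathbb{P}_{z}(X_{t-s}\in dx)$. The only local limit theorem then required is Stone's theorem for $X$ itself (Proposition \ref{local}), applied to the factor $\mathbb{P}(X_{t-u}\in\cdot)$ on the bulk range $u\in(\delta t,(1-\delta)t)$; the entrance law only needs to be controlled \emph{weakly} after rescaling (Lemma \ref{CD}), together with the a priori bounds of Lemma \ref{A} and Corollary \ref{M} to kill the ranges $u<\delta t$ and $u>(1-\delta)t$ uniformly in $y$. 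The resulting limit is the explicit convolution $\int_{0}^{1}ds\int_{0}^{x}s^{-\eta-\overline{\rho}}g(s^{-\eta}y)f_{1-s}(x-y)\,dy$, which is identified with $g(x)$ by the self-consistency identity of Lemma \ref{mea} (itself the stable case of (\ref{DA})), so no unidentified constant arises. If you want to salvage your route, you would have to either prove the joint local renewal theorem from scratch (including local regularity of $\tau$) or convert it into the paper's form; as it stands the proposal is a plan whose central step is missing.
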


\begin{remark}
If we made the simplifying assumptions that $\alpha \overline{\rho }<1,$
that $X$ is regular upwards and downwards and does not creep downwards, the
following proofs would be considerably simplified. However we believe the
additional work is justified because it would be unnatural to exclude the
case $\alpha \overline{\rho }=1,$ or to make any assumptions about the local
behaviour of $X.$
\end{remark}

\section{Preliminaries}

We recall a few customary notations in fluctuation theory. For background
about fluctuation theory for L\'{e}vy processes the reader is referred to
the books \cite{bertoinbook}, \cite{doneybook}, and \cite{kypbook}.

The process $X_{t}-I_{t}=X_{t}-\inf_{0\leq s\leq t}X_{s},$ $t\geq 0$ is a
strong Markov process, the point process of its excursions out of zero forms
a Poisson point process with intensity or excursion measure $\underline{n}$.
We will denote by $\{\epsilon _{t},t>0\}$ the generic excursion process and
by $\zeta $ its lifetime. It is known that under $\underline{n}$ the
excursion process is Markovian with semigroup given by $\mathbb{P}%
_{x}(X_{t}\in dy,t<T_{0}).$ We will denote by $L^{\ast }$ the local time at $%
0$ for $X-\underline{X},$ and we will assume WLOG that it is normalized so
that 
\begin{equation}
\mathbb{E}\left( \int_{0}^{\infty }e^{-s}dL_{s}^{\ast }\right) =1.
\label{norm}
\end{equation}%
We will denote by $\tau ^{\ast }$ the right continuous inverse of the local
time $L^{\ast },$ and refer to it as the downward ladder time process, and
call $\{H_{t}^{\ast }=-X_{\tau _{t}^{\ast }},t\geq 0\}$ the downward ladder
height process. The potential measure of the bivariate process $(\tau ^{\ast
},H^{\ast })$ will be denoted by 
\begin{equation*}
W^{\ast }(dt,dx)=\int_{0}^{\infty }ds\mathbb{P}(\tau _{s}^{\ast }\in
dt,H_{s}^{\ast }\in dx),\qquad t\geq 0,x\geq 0.
\end{equation*}%
The marginal in space of $W^{\ast }$ is the potential measure of the
downward ladder height process $H^{\ast },$ and we will denote by $U^{\ast }$
its associated renewal function 
\begin{equation*}
U^{\ast }(a):=W^{\ast }([0,\infty )\times \lbrack 0,a])=\int_{0}^{\infty }ds%
\mathbb{P}(H_{s}^{\ast }\leq a),\qquad a\geq 0.
\end{equation*}%
Analogously, the function $V^{\ast }$ will denote the renewal function of
the downward ladder time process, $\tau ^{\ast }.$ We will use a similar
notation for the analogous objects defined in terms of $X^{\ast }$ but we
will remove the symbol\ $\ast $ \ from them, and the excursion measure will
be denoted by $\overline{n}.$

An important duality relation, which we will use extensively, connects $%
W^{\ast }$ and $W$ with the characteristic measures $\underline{n}$ and $%
\overline{n}$: see Lemma 1 in \cite{chaumontsup2010}.

\begin{lemma}
\label{LC}Let $a,a^{\ast }$ denote the drifts in the ladder time processes $%
\tau $ and $\tau ^{\ast }:$ then on $[0,\infty )\times \lbrack 0,\infty )$
we have the identities 
\begin{eqnarray}
W(dt,dx) &=&a^{\ast }\delta _{\{(0,0)\}}(dt,dx)+\underline{n}(\epsilon
_{t}\in dx,\zeta >t)dt,  \label{W*} \\
W^{\ast }(dt,dx) &=&a\delta _{\{(0,0)\}}(dt,dx)+\overline{n}(\epsilon
_{t}\in dx,\zeta >t)dt,  \label{W}
\end{eqnarray}%
so that, in particular%
\begin{eqnarray}
U(x) &=&a^{\ast }+\int_{0}^{\infty }\int_{0}^{x}\underline{n}(\epsilon
_{t}\in dy,\zeta >t)dt,  \label{U} \\
U^{\ast }(x) &=&a+\int_{0}^{\infty }\int_{0}^{x}\overline{n}(\epsilon
_{t}\in dy,\zeta >t)dt.  \label{U*}
\end{eqnarray}
\end{lemma}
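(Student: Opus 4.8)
This is Lemma~1 of \cite{chaumontsup2010}; here I indicate the strategy I would follow. Since the map $X\mapsto -X$ interchanges the ascending and descending ladder data --- in particular $(\tau,H,\overline n,a)\longleftrightarrow(\tau^{\ast},H^{\ast},\underline n,a^{\ast})$ --- the two bivariate identities \eqref{W*} and \eqref{W} are equivalent, so it is enough to prove \eqref{W*}. Moreover \eqref{U} and \eqref{U*} follow from \eqref{W*} and \eqref{W} by integrating the spatial variable over $[0,x]$ and the time variable over $[0,\infty)$: the Dirac mass at $(0,0)$ then contributes the drift term ($a^{\ast}$ in \eqref{U}, $a$ in \eqref{U*}) while the absolutely continuous part yields the displayed double integral. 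Thus everything reduces to the space--time identity \eqref{W*} for the potential measure $W$ of the ascending ladder process $(\tau,H)$.

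First I would settle the time marginal, where the mechanism is transparent. The Laplace transform of $t\mapsto W([0,t]\times[0,\infty))$ equals $1/\kappa(\lambda,0)$, where $\kappa$ is the Laplace exponent of $(\tau,H)$; with the normalisation of local times fixed as in \eqref{norm} the Wiener--Hopf factorisation gives $\kappa(\lambda,0)\kappa^{\ast}(\lambda,0)=\lambda$, so the transform is $\kappa^{\ast}(\lambda,0)/\lambda$; and It\^o's excursion theory identifies $\tau^{\ast}$ as a subordinator with drift $a^{\ast}$ and L\'evy measure $\underline n(\zeta\in\cdot)$, so $\kappa^{\ast}(\lambda,0)=a^{\ast}\lambda+\int_{0}^{\infty}(1-e^{-\lambda s})\,\underline n(\zeta\in ds)$. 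Substituting and inverting the transform yields $W([0,t]\times[0,\infty))=a^{\ast}+\int_{0}^{t}\underline n(\zeta>u)\,du$, which is \eqref{W*} integrated over the spatial variable.

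The genuine work is the space--time refinement, which I would obtain by path reversal. Fix $t>0$ and use the classical identity that the reversed path $\widehat X_{s}:=X_{t}-X_{(t-s)-}$, $0\le s\le t$, has the same law as $(X_{s})_{0\le s\le t}$; combined with a path decomposition at the infimum, this relates the ascending ladder structure of $X$ on $[0,t]$ --- whose expected ladder-local-time accumulated before time $t$ and before the maximum exceeds $x$ equals $W([0,t]\times[0,x])$ --- to the excursion structure of the infimum-reflected process $X-\underline X$. Decomposing the latter by It\^o's excursion theory, $W([0,t]\times[0,x])$ splits into the Lebesgue amount of time the reflected path spends at $0$, which through the drift of $\tau^{\ast}$ and the normalisation \eqref{norm} produces the atom $a^{\ast}\,\delta_{\{(0,0)\}}$, and the contribution of the genuine excursions, whose $\underline n$-expected occupation of $[0,x]$ over their first $t$ units of lifetime is $\int_{0}^{t}\int_{0}^{x}\underline n(\epsilon_{u}\in dy,\zeta>u)\,du$. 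Letting $t$ and $x$ vary then identifies $W$ and yields \eqref{W*}.

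The main obstacle is exactly this last bookkeeping: one must check that, under the reversal, the excursions are governed by $\underline n$ \emph{jointly} in lifetime and spatial occupation, and that the ``time at the extremum'' accounts for the atom and nothing more --- it is here that the normalisation \eqref{norm} is needed to exclude a spurious multiplicative constant. An equivalent route that avoids the reversal keeps a Laplace-transform computation throughout: the double Laplace transform of $W$ is $1/\kappa(\lambda,\mu)$, the space--time Wiener--Hopf factorisation relates $\kappa$ to $\kappa^{\ast}$, and the excursion-measure identity $\int_{0}^{\infty}e^{-\lambda t}\,\underline n(e^{-\mu\epsilon_{t}};\zeta>t)\,dt=\underline n\bigl(\int_{0}^{\zeta}e^{-\lambda t-\mu\epsilon_{t}}\,dt\bigr)$ is evaluated via the resolvent of $X$ killed at its first passage below $0$; there the delicate point is controlling the complex arguments in the space--time factorisation.
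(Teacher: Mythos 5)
The paper does not prove this lemma at all --- it is quoted verbatim from Lemma 1 of \cite{chaumontsup2010} --- so there is no internal proof to compare yours against. Your sketch is nonetheless a correct outline of the standard argument. The reductions (duality $X\mapsto -X$ to pass between (\ref{W*}) and (\ref{W}); integration over $[0,\infty)\times[0,x]$ to obtain (\ref{U}) and (\ref{U*})) are exactly right, as is the Laplace-transform identification of the time marginal. For the joint refinement, the gap you flag closes most cleanly along your second route, and without any complex-variable or ``space--time'' factorisation: write
\[
\frac{1}{\kappa(\lambda,\mu)}=\frac{1}{\kappa(\lambda,0)}\cdot\frac{\kappa(\lambda,0)}{\kappa(\lambda,\mu)}=\frac{1}{\kappa(\lambda,0)}\,\mathbb{E}\bigl(e^{-\mu\overline{X}_{e_{\lambda}}}\bigr),
\]
where $e_{\lambda}$ is an independent exponential time and $\overline{X}$ is the running supremum; replace $\overline{X}_{e_{\lambda}}$ by $X_{e_{\lambda}}-\underline{X}_{e_{\lambda}}$ by duality, and decompose the occupation measure of $X-\underline{X}$ by It\^o's master formula to get $\mathbb{E}\bigl(e^{-\mu(X-\underline{X})_{e_{\lambda}}}\bigr)=\lambda\,\kappa^{\ast}(\lambda,0)^{-1}\bigl(a^{\ast}+\int_{0}^{\infty}e^{-\lambda t}\,\underline{n}(e^{-\mu\epsilon_{t}},\zeta>t)\,dt\bigr)$. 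The only place the Wiener--Hopf factorisation enters is the elementary real identity $\kappa(\lambda,0)\kappa^{\ast}(\lambda,0)=c\lambda$, with the normalisation (\ref{norm}) (together with the implicit normalisation of $L$) forcing $c=1$; the resulting double transform $1/\kappa(\lambda,\mu)=a^{\ast}+\int_{0}^{\infty}e^{-\lambda t}\,\underline{n}(e^{-\mu\epsilon_{t}},\zeta>t)\,dt$ inverts to precisely (\ref{W*}). So your last sentence overstates the difficulty of that route --- no control of complex arguments is needed --- while your path-reversal route is correct in outline but is the harder bookkeeping exercise you yourself identify.
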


\begin{remark}
Note that $a=0$ (respectively $a^{\ast }=0)$ is equivalent to $X$ being regular
downwards (respectively \ upwards$)$, and since we exclude the Compound
Poisson case, at most one of $a$ and $a^{\ast }$ is positive.
\end{remark}

We write $\mathbb{P}^{\ast }$ for the law of the dual L\'{e}vy process $%
X^{\ast }=-X,$ and define

\begin{equation*}
\overline{\Pi}(y)=\Pi(y,\infty ),\quad \overline{\Pi }^{\ast }(y):=\Pi
(-\infty ,-y),\quad y\geq 0.
\end{equation*}%
Put 
\begin{eqnarray*}
h_{x}(t) &=&\mathbb{E}_{x}\left( \overline{\Pi }^{\ast
}(X_{t}),t<T_{0}\right) =\int_{0}^{\infty }\mathbb{P}_{x}(X_{t}\in
dy,t<T_{0})\overline{\Pi }^{\ast }(y),\text{ }x,t>0, \\
h_{0}(t) &=&\underline{n}(\overline{\Pi }^{\ast }(\epsilon _{t}),t<\zeta
)=\int_{0}^{\infty }\underline{n}(\epsilon _{t}\in dy,t<\zeta )\overline{\Pi 
}^{\ast }(y),\text{ }t>0,
\end{eqnarray*}

\begin{lemma}
\label{lem1}We have the following formulae 
\begin{eqnarray}
\mathbb{P}_{x}\left( T_{0}\in dt,X_{T_{0}-}>0\right) &=&h_{x}(t)dt,\text{ }%
x,t>0;  \label{eqf2} \\
\underline{n}(\zeta \in dt,\epsilon _{\zeta -}>0) &=&h_{0}(t)dt,\text{ }t>0.
\end{eqnarray}%
In particular, for $x>0,$ we have that if $X$ does not creep downward then
the law of $T_{0}$ under $\mathbb{P}_{x}$ is absolutely continuous w.r.t
Lebesgue measure.
\end{lemma}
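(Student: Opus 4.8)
The plan is to derive $(\ref{eqf2})$ from the compensation formula for the jumps of $X$, then to obtain the excursion identity from $(\ref{eqf2})$ via the Markov property under $\underline{n}$, and finally to read off the absolute continuity statement. On the event $\{X_{T_{0}-}>0\}$, first passage below $0$ is effected by a single downward jump: there is a (unique) jump time $s$ at which $X$ has remained nonnegative on $(0,s)$, satisfies $X_{s-}>0$, and has $X_{s}<0$; then $T_{0}=s$ and $X_{T_{0}-}=X_{s-}$, and conversely any jump with these properties produces such a passage. Hence, summing over the jump times of $X$ against the predictable functional $F(s,\omega,y)=\mathbf 1_{\{s<T_{0}\}}\mathbf 1_{\{y<-X_{s-}\}}$ (which is left-continuous and adapted in $s$), the compensation formula, followed by taking $\mathbb{P}_{x}$-expectations, gives
\begin{equation*}
\mathbb{P}_{x}(T_{0}\in dt,\ X_{T_{0}-}>0)=\mathbb{E}_{x}\big[\,\mathbf 1_{\{t<T_{0}\}}\,\Pi\big((-\infty,-X_{t-})\big)\,\big]\,dt=\mathbb{E}_{x}\big[\,\overline{\Pi}^{\ast}(X_{t});\,t<T_{0}\,\big]\,dt,
\end{equation*}
where in the last step $X_{t-}$ is replaced by $X_{t}$, valid for Lebesgue-a.e.\ $t$, a.s. This is $(\ref{eqf2})$; note also that $\int_{0}^{\cdot}h_{x}(s)\,ds\le 1$, so $h_{x}$ is a.e.\ finite.

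For the excursion identity I would use that under $\underline{n}$ the process $(\epsilon_{t})_{t<\zeta}$ is Markov with the sub-probability semigroup $P_{r}(x,dy)=\mathbb{P}_{x}(X_{r}\in dy,\,r<T_{0})$, and that $\mu_{s}(dx):=\underline{n}(\epsilon_{s}\in dx,\,s<\zeta)$, carried by $(0,\infty)$, is the associated entrance law. Fix $0<s<t$. Conditioning at time $s$ and applying $(\ref{eqf2})$ to the portion of the excursion after $s$ (which, started from a level $x>0$, runs like $X$ killed at $T_{0}$, the event $\{\epsilon_{\zeta-}>0\}$ corresponding to $\{X_{T_{0}-}>0\}$) gives
\begin{equation*}
\underline{n}\big(\zeta\in dt,\ \epsilon_{\zeta-}>0,\ \zeta>s\big)=\Big(\int_{(0,\infty)}\mu_{s}(dx)\,h_{x}(t-s)\Big)dt=\Big(\int_{(0,\infty)}\overline{\Pi}^{\ast}(y)\int_{(0,\infty)}\mu_{s}(dx)\,P_{t-s}(x,dy)\Big)dt.
\end{equation*}
By the entrance-law (Chapman--Kolmogorov) identity $\int_{(0,\infty)}\mu_{s}(dx)P_{t-s}(x,dy)=\mu_{t}(dy)$ the bracketed quantity equals $\int_{(0,\infty)}\overline{\Pi}^{\ast}(y)\,\underline{n}(\epsilon_{t}\in dy,\,t<\zeta)=h_{0}(t)$, independently of $s\in(0,t)$. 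Letting $s\downarrow 0$ and using that $\zeta>0$ holds $\underline{n}$-a.e.\ then yields $\underline{n}(\zeta\in dt,\,\epsilon_{\zeta-}>0)=h_{0}(t)\,dt$.

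For the last assertion, since $X_{T_{0}-}\ge 0$ on $\{T_{0}<\infty\}$ we have $\mathbb{P}_{x}(T_{0}\in dt)=h_{x}(t)\,dt+\mathbb{P}_{x}(T_{0}\in dt,\ X_{T_{0}-}=0)$, and the second term lives on the event $C_{0}=\{X_{T_{0}-}=0\}$ of continuous passage, whose probability is $\nu(x)$. Since continuous passage has positive probability only when $d^{\ast}>0$, if $X$ does not creep downwards then $\nu(x)=0$ for every $x>0$, so $\mathbb{P}_{x}(T_{0}\in dt)=h_{x}(t)\,dt$ on $(0,\infty)$.

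The step needing most care is the compensation-formula computation: pinning down the predictable integrand that encodes $\{T_{0}\in dt,\ X_{T_{0}-}>0\}$, and checking that the configurations one discards --- the left limit being $0$, a jump landing exactly at $0$, and the substitution of $X_{t}$ for $X_{t-}$ --- are genuinely negligible (for Lebesgue-a.e.\ $t$, respectively $\Pi$-a.e.). In the excursion identity the only subtlety is legitimising the conditioning at time $s$ and the entrance-law manipulation under the $\sigma$-finite measure $\underline{n}$, which follow from the Markov property of the excursion process recalled in the preliminaries.
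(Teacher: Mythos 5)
Your proposal is correct and follows essentially the same route as the paper: the first identity via the compensation formula applied to the jump causing a discontinuous passage (with the same a.e.\ replacement of $X_{s-}$ by $X_{s}$), and the second via the Markov property under $\underline{n}$ at a time $s$ combined with the entrance-law identity, which is just the differential form of the paper's computation of the tail $\underline{n}(\zeta>t+s,\epsilon_{\zeta-}>0)$. The concluding observation that non-creeping forces $\mathbb{P}_{x}(X_{T_{0}-}=0)=0$ is exactly what the paper leaves implicit.
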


\begin{proof}
Let $f$ \ be a measurable and bounded function. Using the fact that the
jumps of $X$ form a Poisson point process in $\mathbb{R}^{+}\times \mathbb{R}$
with intensity measure $dt\Pi (dz),$ and the compensation formula, we get%
\begin{equation*}
\begin{split}
\mathbb{E}_{x}(f(T_{0})1_{\{X_{T_{0}-}>0\}})& =\mathbb{E}\left(
\sum_{s>0}f(s)1_{\{I_{s-}>-x,\Delta X_{s}<-(x+X_{s-})\}}\right) \\
& =\mathbb{E}\left( \int_{0}^{\infty }dsf(s)1_{\{I_{s-}>-x\}}\Pi \left(
-\infty ,-(x+X_{s-})\right) \right) \\
& =\mathbb{E}\left( \int_{0}^{\infty }dsf(s)1_{\{I_{s}>-x\}}\Pi \left(
-\infty ,-(x+X_{s})\right) \right) \\
& =\mathbb{E}\left( \int_{0}^{\infty }dsf(s)1_{\{s<T_{-x}\}}\Pi \left(
-\infty ,-(x+X_{s})\right) \right) \\
& =\int_{0}^{\infty }dsf(s)\mathbb{E}\left( \Pi \left( -\infty
,-(x+X_{s})\right) 1_{\{s<T_{-x}\}}\right) \\
& =\int_{0}^{\infty }dsf(s)\mathbb{E}_{x}\left( \Pi \left( -\infty
,-X_{s}\right) 1_{\{s<T_{0}\}}\right) .
\end{split}%
\end{equation*}%
{We next prove the identity under $\underline{n}.$ For $t,s\geq 0,$ we have
from the Markov property under $\underline{n}$ that 
\begin{equation*}
\begin{split}
\underline{n}(\zeta >t+s,\epsilon _{\zeta -}>0)& =\underline{n}\left( 
\mathbb{P}_{\epsilon _{s}}(T_{0}>t,X_{T_{0}-}>0),s<\zeta \right) \\
& =\int_{[0,\infty )}\underline{n}(\epsilon _{s}\in dy,s<\zeta
)\int_{t}^{\infty }du\mathbb{E}_{y}(\overline{\Pi }^{-}(X_{u}),u<T_{0}) \\
& =\int_{t}^{\infty }du\int_{[0,\infty )}\underline{n}(\epsilon _{s}\in
dy,s<\zeta )\mathbb{E}_{y}(\overline{\Pi }^{-}(X_{u}),u<T_{0}) \\
& =\int_{t}^{\infty }du\underline{n}\left( \overline{\Pi }^{-}(\epsilon
_{u+s}),u+s<\zeta \right) \\
& =\int_{t+s}^{\infty }dv\underline{n}\left( \overline{\Pi }^{-}(\epsilon
_{v}),v<\zeta \right) .
\end{split}%
\end{equation*}%
}
\end{proof}

In the case where the process creeps downward there is no general result
about the absolute continuity of the law of $T_{0}$ under $\mathbb{P}%
_{x}(\cdot |X_{T_{0}}=0).$ However, if $X$ is spectrally positive then the
downward ladder time process is in fact the first passage time process $%
(T_{-x},x>0)$, where%
\begin{equation*}
T_{-x}=\inf \{t>0:X_{t}<-x\},
\end{equation*}%
and $T_{0}$ under $\mathbb{P}_{x}$ has the same law as $T_{-x}$ under $%
\mathbb{P}.$ By the continuous time version of the "Ballot Theorem" (see
Corollary 3, p190 of \cite{bertoinbook}) it follows that the law of $T_{0}$
is absolutely continuous under $\mathbb{P}_{x}$ for all $x>0$ iff the $%
\mathbb{P}_{x}$ distribution of $X_{t}$ is absolutely continuous for all $%
t>0.$ But Orey \cite{Orey} gave examples where this fails. In these
examples, $X$ has infinite variation, but it is also easy to see that for instance if $%
X_{t}=N_{t}-t,$ $t\geq 0,$ where $N_{t}$ is a Poisson process with parameter 
$\lambda >0,$ the law of $T_{0}$ under $\mathbb{P}_{x}$ is atomic with
support in $\{x+n,n\in \mathbb{Z}^{+}\}.$ (This example is not really
relevant to the sequel, because such a process cannot be in the domain of
attraction of any stable law.)

In order to shorten the notation throughout the rest of the paper we will
understand the following terms as equal, for $s>0,$ 
\begin{equation*}
\underline{n}_{s}(dy)=\underline{n}(\epsilon _{s}\in dy)=\underline{n}%
(\epsilon _{s}\in dy,s<\zeta ),\qquad y>0.
\end{equation*}%
Since in any case we will be integrating over $(0,\infty )$ there will not
be any risk of confusion. Analogous notation will be used under the
excursion measure $\overline{n}$. 

Throughout this paper we will make systematic use of the identities in the
following Lemma \ref{lemma:1} as well as the estimates in the Lemma~\ref{L3}.

\begin{lemma}
\label{lemma:1}
\begin{itemize}
\item[(i)] The semigroup of $X$ can be expressed as: for $x,y\in\mathbb{R}$ 
\begin{equation}\label{lastexit0}
\begin{split}
\mathbb{P}_{x}\left( X_{t}\in dy\right) &=\int_{s=0}^{t}ds\int_{z>(x-y)^{+}}%
\overline{n}_{s}(dz)\underline{n}_{t-s}(dy+z-x)\\ 
 &\ +a\underline{n}_{t}(dy-x)\boldsymbol{1}_{\{y\geq x\}}+a^{\ast }\overline{n%
}_{t}(x-dy)\boldsymbol{1}_{\{y\leq x\}}.
\end{split}
\end{equation}

\item[(ii)] The semigroup of $X$ killed at its first entrance into $(-\infty
,0)$ can be expressed as: for $x,y\in \mathbb{R}^{+}$
\begin{equation}\label{lastexit}
\begin{split}
\mathbb{P}_{x}\left( X_{t}\in dy,t<T_{0}\right)
&=\int_{s=0}^{t}ds\int_{z\in ((x-y)^{+},x]}\overline{n}_{s}(dz)\underline{n}%
_{t-s}\left( dy+z-x\right) \\
&+a\underline{n}_{t}(dy-x)\boldsymbol{1}_{\{y\geq x\}}+a^{\ast }\overline{n%
}_{t}(x-dy)\boldsymbol{1}_{\{y\leq x\}}.
\end{split}
\end{equation}

\item[(iii)] The one-dimensional distribution of the excursion process under 
$\underline{n}$ can be decomposed as: for $x>0$ 
\begin{equation}
t\underline{n}(\epsilon _{t}\in dx)=\int_{0}^{t}ds\int_{z\in \lbrack 0,x]}%
\underline{n}_{s}(dz)\mathbb{P}_{z}(X_{t-s}\in dx)+a^{\ast }\mathbb{P}%
(X_{t}\in dx).  \label{DA}
\end{equation}
\end{itemize}
\end{lemma}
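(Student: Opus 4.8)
The three identities are all consequences of the master decomposition of a Lévy path via its past infimum together with the Poissonian structure of the excursions away from $0$ of $X-\underline X$ and, dually, of $X^{\ast}-\underline{X^{\ast}}$, encoded in the duality relations \eqref{W*}--\eqref{W} of Lemma \ref{LC}. The plan is to prove (i) first, since (ii) is obtained by the same argument with the excursion truncated so as to never fall below the level at which the infimum is attained, and (iii) follows by combining (i) with a last-exit decomposition at $0$ for the reflected process. Throughout I would fix $x\in\mathbb R$ and, by spatial homogeneity, reduce to $x=0$ where convenient.

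For (i), decompose the event $\{X_t\in dy\}$ according to the value $s\le t$ of the last time before $t$ at which $X$ attains a new infimum, equivalently the last time the reflected process $X-\underline X$ is at $0$ before time $t$. On $\{s<t\}$, by the Markov property under $\mathbb P$ and the description of the pre-$s$ part of the path: the infimum process up to $s$ contributes via the potential measure $W^{\ast}$ of the \emph{dual} ladder process $(\tau^{\ast},H^{\ast})$, which by \eqref{W} equals $\overline n(\epsilon_s\in dz,\zeta>s)\,ds$ (plus the atom $a\,\delta_{(0,0)}$ when $X$ is not regular upwards), where $z=x-\underline X_s\ge 0$ is the depth of the infimum; and the post-$s$ part is an excursion of $X-\underline X$ away from $0$ of length $>t-s$, which under $\underline n$ has $\epsilon_{t-s}\in d(\cdot)$, started from height $-z$ relative to $x$, so that $X_t\in dy$ translates into $\epsilon_{t-s}\in dy+z-x$, with the constraint $z>(x-y)^+$ ensuring $\epsilon_{t-s}>0$. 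Integrating $s$ over $(0,t)$ and $z$ over $z>(x-y)^+$ gives the first term. The two boundary terms come from the atoms: $a\underline n_t(dy-x)\mathbf 1_{\{y\ge x\}}$ is the contribution of paths whose infimum is still $0$ at time $t$ (so $X_t\ge x$), weighted by the drift $a$ of $\tau$; and $a^{\ast}\overline n_t(x-dy)\mathbf 1_{\{y\le x\}}$ is the contribution of the event that $s=t$, i.e. $X$ is at a new infimum at time $t$, which carries the drift $a^{\ast}$ of $\tau^{\ast}$ via \eqref{W}. Making this bookkeeping rigorous is the main obstacle: one must justify the disintegration at the last zero of the reflected process and check that the drift atoms are accounted for exactly once, for which I would appeal to the compensation/Master formula for Poisson point processes of excursions (as in \cite{bertoinbook}) rather than an informal path argument.

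For (ii), I would repeat the derivation of (i) but now kill the path at $T_0$; the only change is that the excursion straddling time $t$ must stay above $0$, i.e. its minimum relative to $x$ must be $\ge -x$, which forces $z\le x$; combined with $z>(x-y)^+$ this yields the range $z\in((x-y)^+,x]$, and the boundary terms are unchanged since on them $T_0>t$ automatically. For (iii), multiply through: start from \eqref{W*}, which gives $t\,\underline n(\epsilon_t\in dx)$ after one notes $a^{\ast}$ is the drift of $\tau$ in the dual picture; then perform a last-exit decomposition of the excursion $\epsilon$ at its last visit to $0$ before $t$ — under $\underline n$ the post-last-zero piece is governed by the killed semigroup $\mathbb P_z(X_{t-s}\in dx,\,t-s<T_0)$ — and recognize that integrating the killed semigroup against $\underline n_s(dz)$ over $s\in(0,t)$, $z\in[0,x]$, plus the drift term $a^{\ast}\mathbb P(X_t\in dx)$ coming from the atom in \eqref{W*}, reconstitutes exactly \eqref{lastexit0} integrated once in time; alternatively, and more cleanly, substitute \eqref{lastexit0} directly into \eqref{W*} and simplify. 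In all three parts the computations after the structural step are routine Fubini manipulations, so I expect no difficulty there; the delicate point everywhere is the correct handling of the drift atoms $a$, $a^{\ast}$, for which the Remark after Lemma \ref{LC} (at most one is positive) is a useful sanity check.
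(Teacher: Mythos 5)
The paper does not prove this lemma at all: it cites Alili--Chaumont \cite{alilichaumont} for (\ref{DA}) and Chaumont \cite{chaumontsup2010} for (\ref{lastexit0})--(\ref{lastexit}). Your sketch for (i) and (ii) is a faithful reconstruction of the argument behind the cited result for those two identities: decompose at the time the infimum over $[0,t]$ is attained, feed the pre-infimum part through $W^{\ast}$ and Lemma \ref{LC}, let the straddling excursion supply $\underline{n}_{t-s}$, and observe that killing at $T_{0}$ only adds the constraint $z\leq x$ on the depth of the infimum. That part is sound (granting the excursion-theoretic disintegration you defer to the Master formula, which is where all the real work is).

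Part (iii) has a genuine gap. An excursion under $\underline{n}$ satisfies $\epsilon_{s}>0$ for all $0<s<\zeta$, so there is no ``last visit to $0$ before $t$'' at which to decompose; what you are actually invoking is the Markov property under $\underline{n}$, and that produces the \emph{killed} semigroup $\mathbb{P}_{z}(X_{t-s}\in dx,\,t-s<T_{0})$ at a single fixed $s$, whereas (\ref{DA}) involves the \emph{unkilled} semigroup, an integral over all $s\in(0,t)$, and a factor of $t$ on the left-hand side that your argument never generates. Nor is the alternative of ``substituting (\ref{lastexit0}) into (\ref{W*}) and simplifying'' a routine Fubini computation: rewriting the right side of (\ref{DA}) via (\ref{W*}) and the strong Markov property at the ladder times $\tau_{u}$ turns it into $\mathbb{E}\bigl[\int_{0}^{\infty}\boldsymbol{1}_{\{\tau_{u}\leq t,\,H_{u}\leq x\}}du;\,X_{t}\in dx\bigr]$, i.e.\ a local-time functional of the path, and identifying this with $t\,\underline{n}(\epsilon_{t}\in dx)$ is precisely the content of the Alili--Chaumont identity. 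Its proof goes through cyclic exchangeability of the increments (a Vervaat-type time-reversal argument, already present in the random-walk version of Alili--Doney), which is where the factor $t$ comes from; it is not a corollary of (i). You should either supply that argument or, as the paper does, cite \cite{alilichaumont} (or \cite{doneysavov}) for this identity.
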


\begin{proof}
The identity (\ref{DA}) is due to Alili and Chaumont \cite{alilichaumont},
and it is a generalization of a result for random walks due to Alili and
Doney \cite{alilidoney}, see also \cite{doneysavov} for further details
about the proof of this result. The proof of the identities (\ref{lastexit0}) and (\ref{lastexit}), together with other useful fluctuation identities
can be found in \cite{chaumontsup2010}. \end{proof}



In what follows, $k,k_{1},k_{2},\cdots $will denote fixed positive constants
whereas $C$ will denote a generic constant whose value can change from line
to line. As previously remarked, the norming function $c(\cdot )\in RV(\eta
),$where $\eta =1/\alpha .$ More precisely we will assume, with no loss of
generality, that $Y$ is a standard stable process, and $c$ can be taken to
be a continuous, monotone increasing inverse of the quantity $x^{2}/m(x);$ where $%
m(x)=\int_{-x}^{x}y^{2}\Pi (dy)$ and necessarily $m(\cdot )\in RV(2-\alpha
) $.  It follows from this that, when $\alpha <2,$ we have $t\overline{\Pi }%
(c(t))\rightarrow k$ and $t\overline{\Pi }^{\ast }(c(t))\rightarrow k^{\ast
},$ with $k^{\ast }>0$ if $\alpha \overline{\rho }<1,$ and $k^{\ast }=0$ if $%
\alpha \overline{\rho }=1,$ when necessarily $k>0.$ Finally when $\alpha =2,$
we have $t(\overline{\Pi }(c(t))+\overline{\Pi }^{\ast }(c(t)))\rightarrow
0, $ so we can take $k=k^{\ast }=0.$

The following local limit theorem is a crucial tool.

\begin{proposition}
\label{local}Assume that $X\in D(\alpha ,\rho ),$ with $\alpha \overline{%
\rho }\leq 1.$ Then uniformly in $\Delta $ and $x\in \mathbb{R},$%
\begin{equation}
c(t)\mathbb{P(}X_{t}\in (x,x+\Delta ])=\Delta (f(\frac{x}{c(t)})+o(1))\text{
as }t\rightarrow \infty .  \label{llt1}
\end{equation}%
Consequently given any $\Delta _{0}>0$ there are constants $k_{0}$ and $%
t_{0} $ such that%
\begin{equation}
c(t)\mathbb{P(}X_{t}\in (x,x+\Delta ])\leq k_{0}\Delta \text{ for all }%
t\geq t_{0}\text{ and }\Delta \in (0,\Delta _{0}].  \label{llt2}
\end{equation}
\end{proposition}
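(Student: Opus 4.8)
The plan is to reduce (\ref{llt1}) to the corresponding statement for the random walk obtained by sampling $X$ at integer times, and then to pass from integer to continuous time by conditioning. Put $S_n=X_n$; this is a random walk whose step law is that of $X_1$, and by (\ref{1.0}) it satisfies $S_n/c(n)\xrightarrow[]{\mathcal{D}}Y_1$ without centering. First I would record that $X_1$ is non-lattice: since $|\mathbb{E}e^{i\theta X_1}|=\exp\{-\tfrac12\sigma^2\theta^2-\int(1-\cos\theta x)\,\Pi(dx)\}$, for $X_1$ to be carried on a coset of $h\mathbb{Z}$ one would need $\sigma=0$ and $\Pi$ carried by $h\mathbb{Z}$; but a L\'evy measure carried by a lattice is finite, contradicting $\Pi(\mathbb{R})=\infty$. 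For the same reason $X_t$ has no atoms for any $t>0$ (an atom would force $X$ to be a compound Poisson process plus a drift). Hence the classical (Stone-type) local limit theorem for non-lattice random walks in the domain of attraction of a stable law (cf.\ the discussion in \cite{rad}) applies to $S_n$ and gives, uniformly in $u\in\mathbb{R}$ and $\Delta$,
\begin{equation*}
c(n)\,\mathbb{P}\bigl(S_n\in(u,u+\Delta]\bigr)=\Delta\bigl(f(u/c(n))+o(1)\bigr),\qquad n\to\infty .
\end{equation*}

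Next I would interpolate. For $t\geq1$ set $n=\lfloor t\rfloor$ and $s=t-n\in[0,1)$; since $X_t-X_n$ is independent of $X_n$ and has the law of $X_s$,
\begin{equation*}
\mathbb{P}\bigl(X_t\in(x,x+\Delta]\bigr)=\int_{\mathbb{R}}\mathbb{P}\bigl(X_n\in(x-z,x-z+\Delta]\bigr)\,\mathbb{P}(X_s\in dz).
\end{equation*}
Inserting the displayed estimate (uniformly in the location $x-z$), pulling the uniform $o(1)$ outside the integral, and replacing $c(n)$ by $c(t)$ via $c\in RV(\eta)$ and $n/t\to1$, one is reduced to checking that $\int_{\mathbb{R}}f\bigl((x-z)/c(t)\bigr)\,\mathbb{P}(X_s\in dz)=f(x/c(t))+o(1)$ uniformly in $x\in\mathbb{R}$ and $s\in[0,1)$. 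This follows from the tightness of $\{\mathcal{L}(X_s):s\in[0,1]\}$ --- a consequence of $\sup_{r\le1}|X_r|<\infty$ a.s.\ --- which makes $X_s/c(t)\to0$ in probability uniformly in $s$, together with the uniform continuity of $f$; the only slightly delicate point is that absorbing the factor $c(t)/c(n)=1+o(1)$ into the argument of $f$ does no harm because the maps $v\mapsto f(\pm e^{v})$ are continuous on $\mathbb{R}$ with finite limits at $\pm\infty$, hence uniformly continuous. This gives (\ref{llt1}), and (\ref{llt2}) then follows at once with $k_0=1+\sup_{y\in\mathbb{R}}f(y)$, since by (\ref{llt1}) the quantity $c(t)\mathbb{P}(X_t\in(x,x+\Delta])/\Delta$ is eventually at most $\sup_y f(y)+1$ uniformly in $x$ and $\Delta$.

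The substantive input is the random-walk local limit theorem used above, in the form uniform \emph{both} in location and in the interval length $\Delta$ down to $0$; in the non-lattice case this last uniformity is the point that needs care, and it is legitimate here precisely because $X_1$ is non-lattice and, being atomless, has $\sup_x\mathbb{P}(X_n\in(x,x+\Delta])\to0$ as $\Delta\to0$. (Alternatively one could cite a L\'evy-process version of Stone's theorem directly and dispense with the interpolation.) I expect the main obstacle to be not a single deep step but the bookkeeping required to carry all error terms uniformly in $x$, $\Delta$ and $s$ through the conditioning --- in particular reconciling $c(\lfloor t\rfloor)$ with $c(t)$ inside $f$ --- and pinning down a clean reference or argument for the uniform-in-$\Delta$ random-walk statement.
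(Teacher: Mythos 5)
There is a genuine gap, and it sits exactly at the step you yourself flag as the ``substantive input''. Your interpolation from integer to continuous time is fine, and so is the passage from (\ref{llt1}) to (\ref{llt2}); but the random-walk statement you feed into it --- Stone's theorem for $S_n=X_n$ with error $\Delta\cdot o(1)$ uniformly in $\Delta\in(0,\Delta_0]$ --- is not what Stone's theorem says, and your justification of the upgrade is incorrect. Stone's theorem \cite{stonebkly} for a non-lattice walk in a stable domain of attraction gives $\sup_x\bigl|c(n)\mathbb{P}(S_n\in(x,x+\Delta])-\Delta f(x/c(n))\bigr|=o(1)$, i.e.\ an \emph{additive} error $o(1)$, uniformly for $\Delta$ in bounded sets; after dividing by $\Delta$ this says nothing as $\Delta\downarrow 0$. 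The form you need, $c(n)\mathbb{P}(S_n\in(x,x+\Delta])\leq k_0\Delta$ for \emph{all} $\Delta\leq\Delta_0$, forces $S_n$ to have a density bounded by $k_0/c(n)$, and that does not follow from $X_1$ being non-lattice and atomless: atomlessness only gives $\sup_x\mathbb{P}(S_n\in(x,x+\Delta])\to0$ as $\Delta\to0$ for each fixed $n$, with no rate. Indeed no random-walk theorem of the required strength exists at the level of generality you invoke: the classical Cantor measure is atomless, non-lattice, has finite variance (hence lies in $D(2,1/2)$), and all of its convolution powers are singular, so that $\sup_{x,\,\Delta\leq\Delta_0}\mathbb{P}(S_n\in(x,x+\Delta])/\Delta=\infty$ for every $n$.

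This is why reducing to the embedded walk is the wrong move in principle, not just a bookkeeping issue. Whatever makes (\ref{llt1})--(\ref{llt2}) true must be extracted from the continuous-time structure, namely the decay as $t\to\infty$ of $|\mathbb{E}e^{i\theta X_t}|=\exp\{-t(\tfrac12\sigma^2\theta^2+\int(1-\cos\theta y)\,\Pi(dy))\}$, which is what controls $\mathbb{P}(X_t\in(x,x+\Delta])$ for small $\Delta$; by sampling at integer times and treating $X_1$ as a black-box non-lattice step law you discard exactly this information. The paper's (admittedly telegraphic) proof is to rerun Stone's Fourier-analytic argument directly on $X_t$ with that exponential in hand, not to quote the random-walk theorem and interpolate. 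To salvage your route you would have to prove the $o(\Delta)$-uniform local limit theorem for $S_n$ using the fact that $X_1$ is infinitely divisible with $\Pi(\mathbb{R})=\infty$ --- which amounts to redoing the continuous-time Fourier argument anyway --- so you may as well work with $X_t$ directly.
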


We have not been able to locate this result in the literature, but it can
easily be proved by repeating the argument used for non-lattice random walks
in \cite{stonebkly}, with very minor changes.

Other useful facts are in:

\begin{lemma}
\label{L3}Assume that $X\in D(\alpha ,\rho ),$ with $\alpha \overline{\rho }%
\leq 1.$ We have that 
\begin{equation}
U^{\ast }(c(t))\sim \frac{k_{1}}{\underline{n}(\zeta >t)},\qquad U(c(t))\sim 
\frac{k_{2}}{\overline{n}(\zeta >t)}\qquad t\rightarrow \infty ,  \label{rt}
\end{equation}%
Also 
\begin{equation}
t\underline{n}(\zeta >t)\overline{n}(\zeta >t)\xrightarrow[t\to\infty]{}%
k_{3},  \label{equivtails}
\end{equation}%
where $k_{3}=(\Gamma (\rho )\Gamma (\overline{\rho }))^{-1}.$
\end{lemma}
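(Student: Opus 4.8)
The plan is to derive both displays from classical renewal-theory / Tauberian arguments applied to the bivariate ladder processes, using the known domain-of-attraction membership of $(\tau^{\ast},H^{\ast})$ and $(\tau,H)$ together with Spitzer-type identities. First I would recall that $X\in D(\alpha,\rho)$ implies the downgoing ladder height process $H^{\ast}$ is in the domain of attraction of a stable law of index $\alpha\overline{\rho}$ (this is stated in the Introduction, citing the standard fluctuation-theory results), so $U^{\ast}(\cdot)\in RV(\alpha\overline{\rho})$; likewise $U(\cdot)\in RV(\alpha\rho)$. To pin down the precise asymptotic constant relating $U^{\ast}(c(t))$ to $\underline{n}(\zeta>t)$, I would use the identity (\ref{U*}), $U^{\ast}(x)=a+\int_0^\infty\int_0^x\overline{n}(\epsilon_s\in dy,s<\zeta)\,ds$, in conjunction with the analogous formula for $U$ and the fact that $\underline{n}(\zeta>t)\in RV(-\overline{\rho})$ from (\ref{1.1}). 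The cleanest route is: by the duality Lemma \ref{LC}, $\int_0^\infty\underline{n}(\epsilon_s\in\mathbb{R}^+, s<\zeta)\mathbf{1}_{\{s\le t\}}\,ds$ is the marginal $W([0,t]\times[0,\infty))=V(t)$, the renewal function of the upgoing ladder time $\tau$, and a standard Tauberian/Karamata argument gives $V(t)\sim k/\underline{n}(\zeta>t)$ for an explicit $k$ — but what is actually needed is the spatial version, so I would instead pass through the bivariate renewal theorem for $(\tau^{\ast},H^{\ast})$ being jointly stable, which yields $U^{\ast}(c(t))\asymp \mathbb{P}(\tau^{\ast}_{L^{\ast}}\dots)$-type asymptotics; equivalently one uses that $L^{\ast}$, the local time, evaluated at time $t$ behaves like $t\underline{n}(\zeta>t)$ up to constants (this is the usual excursion-theoretic identity $n(\zeta>t)\sim$ const$/t\cdot$ (renewal function of $\tau$)), and $H^{\ast}_{L^{\ast}_t}=-I_t\approx c(t)\cdot(\text{stable})$, so $U^{\ast}(c(t))$, being the expected value of the ladder-height clock to reach level $c(t)$, is $\asymp \mathbb{E}L^{\ast}_t\asymp 1/\underline{n}(\zeta>t)$. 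The constant $k_1$ comes out of the normalization (\ref{norm}) and the stable scaling; I would not compute it explicitly unless it is used later.

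For (\ref{equivtails}), the key input is Spitzer's identity / the Wiener–Hopf factorization at the level of tails: $\underline{n}(\zeta>t)$ and $\overline{n}(\zeta>t)$ are (up to the normalization of local times) the renewal functions of the inverse local times $\tau$ and $\tau^{\ast}$ through $\underline{n}(\zeta>t)\cdot V(t)\sim$ const and $\overline{n}(\zeta>t)\cdot V^{\ast}(t)\sim$ const, while $V(t)\in RV(\rho)$, $V^{\ast}(t)\in RV(\overline{\rho})$ because $\mathbb{P}(X_t>0)\to\rho$ and the ascending/descending ladder-time indices are $\rho,\overline{\rho}$ respectively. Combining, $t\,\underline{n}(\zeta>t)\,\overline{n}(\zeta>t)$ is asymptotically $t/(V(t)V^{\ast}(t))$ times an explicit constant, and since $V(t)V^{\ast}(t)\in RV(1)$ one gets convergence to a finite positive limit. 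The identification of the limit as $(\Gamma(\rho)\Gamma(\overline{\rho}))^{-1}$ is the delicate part: it follows from the fact that under $X\in D(\alpha,\rho)$ the renewal functions satisfy $V(t)\sim t^{\rho}/\Gamma(1+\rho)\cdot(\text{slowly varying})$-type refinements together with the exact Spitzer constant $\exp\{\int_0^\infty t^{-1}(\mathbb{P}(X_t>0)-\rho)\,dt\}$ cancelling between the two factors; alternatively one can read it off from the known density formulas for the stable meander or from the arcsine-type law $\mathbb{P}(\sup Y \text{ time}\le 1)$. I would cite the relevant Tauberian theorem (Bingham–Goldie–Teugels) and the Spitzer/arcsine computation rather than reproduce it.

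The main obstacle I anticipate is getting the exact constant $k_3=(\Gamma(\rho)\Gamma(\overline{\rho}))^{-1}$ rather than merely showing the product converges to something finite and positive; this requires tracking the Spitzer exponential factors carefully and knowing that they cancel, plus the precise Karamata constants $\Gamma(1+\rho)$, $\Gamma(1+\overline{\rho})$ coming from the regularly-varying asymptotics of the two ladder-time renewal functions, and using $\rho\Gamma(\rho)=\Gamma(1+\rho)$. By contrast the constants $k_1,k_2$ in (\ref{rt}) are harmless since they are not identified explicitly. A secondary technical point is making sure the normalization (\ref{norm}) of $L^{\ast}$ is consistently propagated through all the excursion-measure identities, since changing it rescales $\underline{n}$ and $\overline{n}$ reciprocally (via (\ref{W*})–(\ref{W})) — but this rescaling manifestly cancels in the product $\underline{n}(\zeta>t)\,\overline{n}(\zeta>t)$, which is reassuring and is presumably why $k_3$ is normalization-independent whereas $k_1,k_2$ are not.
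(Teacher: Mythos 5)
Your plan sits in the right circle of ideas (Tauberian theorems for the ladder--time subordinators, the duality identities of Lemma \ref{LC}, and the convergence of the rescaled ladder processes), which is the same toolkit the paper uses; but as written it has two genuine gaps. First, for (\ref{equivtails}) the decisive step is missing. The two Tauberian relations are $V(t)\,\overline{n}(\zeta>t)\to(\Gamma(1+\rho)\Gamma(1-\rho))^{-1}$ and $V^{\ast}(t)\,\underline{n}(\zeta>t)\to(\Gamma(1+\overline{\rho})\Gamma(1-\overline{\rho}))^{-1}$; you have stated them with $\underline{n}$ and $\overline{n}$ interchanged, since the L\'evy measure of the \emph{upgoing} ladder time $\tau$ has tail $\overline{n}(\zeta>t)$, not $\underline{n}(\zeta>t)$, and relatedly your ``$V(t)\sim k/\underline{n}(\zeta>t)$'' cannot hold because the left side is in $RV(\rho)$ while the right side is in $RV(\overline{\rho})$. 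More seriously, ``$V(t)V^{\ast}(t)\in RV(1)$, hence $t/(V(t)V^{\ast}(t))$ converges to a finite positive limit'' is a non sequitur: a slowly varying function need not converge. To obtain convergence, and the constant, one must combine the Tauberian relation for $V$ with the \emph{exact} identity $V[0,t)=\mathrm{const}+\int_{0}^{t}\underline{n}(\zeta>s)\,ds$ coming from (\ref{W*}), which by Karamata gives $V[0,t)\sim\rho^{-1}t\,\underline{n}(\zeta>t)$; equating the two expressions for $V$ yields $t\,\underline{n}(\zeta>t)\,\overline{n}(\zeta>t)\to\rho/(\Gamma(1+\rho)\Gamma(1-\rho))=(\Gamma(\rho)\Gamma(\overline{\rho}))^{-1}$. (Equivalently one can use the Wiener--Hopf product $\kappa(q,0)\,\kappa^{\ast}(q,0)=\mathrm{const}\cdot q$; your allusion to ``Spitzer constants cancelling'' points in that direction but is not an argument.) This combination is exactly what the paper's proof does.

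Second, for (\ref{rt}) your argument only delivers $U^{\ast}(c(t))\asymp 1/\underline{n}(\zeta>t)$, i.e.\ two--sided bounds, whereas the lemma asserts an asymptotic equivalence with a (possibly unidentified) constant $k_{1}$. The obstacle is the precise translation between the spatial scale $c(t)$ and the temporal scale $t$. The paper obtains it from Fristedt's formula: the Laplace exponent of the upward ladder height of $Y^{t}_{s}=X_{ts}/c(t)$ satisfies $\kappa^{(t)}(0,\lambda)=\kappa(0,\lambda/c(t))/\kappa(1/t,0)$, and the convergence of the rescaled ladder processes forces $\kappa^{(t)}(0,1)\to 1$, hence $\kappa(0,1/c(t))\sim\kappa(1/t,0)$; combining this with Proposition III.1 of \cite{bertoinbook} applied to $H$ and to $\tau$ gives $U(c(t))\sim(\Gamma(1+\alpha\rho)\Gamma(1-\rho)\overline{n}(\zeta>t))^{-1}$, and duality gives the statement for $U^{\ast}$. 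Your heuristic via $-I_{t}=H^{\ast}_{L^{\ast}_{t}}\approx c(t)$ and the bivariate stable convergence of $(\tau^{\ast},H^{\ast})$ captures the same phenomenon, but to upgrade $\asymp$ to $\sim$ you would still need to extract the exact asymptotic relation between $\kappa(0,1/c(t))$ and $\kappa(1/t,0)$ (or an equivalent statement), which your write--up leaves unproved.
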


\begin{proof}
Let $Y^{t}$ the L\'{e}vy process defined by $Y_{s}^{t}:=\frac{X_{ts}}{c(t)}%
,s\geq 0,$ so that $Y^{t}$ converges weakly to $Y$. By a recent result by
Chaumont and Doney~\cite{chaumontanddoney}, see also \cite{vigonthesis}
chapter 3, Lemma 3.4.2, we know that the ladder processes associated to $%
Y^{t} $ also converge weakly towards those associated to $Y.$ Hence the
upward ladder height subordinator associated to $Y^{t}$ converges to a
stable subordinator of parameter $\alpha \rho ,$ where if $\alpha \rho =1$
we interpret the limit as a pure drift. On the one hand, when we write this
in terms of Laplace exponents we get 
\begin{equation*}
\lim_{t\rightarrow \infty }{\kappa }^{(t)}(0,\lambda )=B\lambda ^{\alpha
\rho },\qquad \lambda \geq 0,
\end{equation*}%
where ${\kappa }^{(t)}(\cdot ,\cdot )$ denotes the Laplace exponent of the
upward ladder process associated to $Y^{t},$ and $B$ is a constant depending
on the normalization of the local time, which because of the normalization
chosen here equals $1$. On the other hand, when we write Fristedt's formula
for ${\kappa }^{(t)}$ we get the identities 
\begin{equation*}
\begin{split}
{\kappa }^{(t)}(0,\lambda )& =\exp \left\{ \int_{0}^{\infty }\frac{ds}{s}%
\int_{[0,\infty )}\left( e^{-s}-e^{-\lambda x}\right) \mathbb{P}%
(Y_{s}^{t}\in dx)\right\} \\
& =\exp \left\{ \int_{0}^{\infty }\frac{ds}{s}\int_{[0,\infty )}\left(
e^{-s/t}-e^{-\lambda x/c(t)}\right) \mathbb{P}(X_{s}\in dx)\right\} \\
& =\exp \left\{ \int_{0}^{\infty }\frac{ds}{s}\int_{[0,\infty )}\left(
e^{-s}-e^{-\lambda x/c(t)}\right) \mathbb{P}(X_{s}\in dx)\right\} \\
& \qquad \times \exp \left\{ -\int_{0}^{\infty }\frac{ds}{s}\left(
e^{-s}-e^{-s/t}\right) \mathbb{P}(X_{s}\geq 0)\right\} \\
& =\frac{{\kappa }(0,\lambda /c(t))}{{\kappa }(1/t,0)},
\end{split}%
\end{equation*}%
for $\lambda \geq 0,t>0;$ where $\kappa (\cdot ,\cdot )$ denotes the Laplace
exponent of the upward ladder process $(\tau ,H).$ In particular, since $%
\kappa (0,1)=1,$ 
\begin{equation*}
{\kappa }(0,1/c(t))\sim {\kappa }(1/t,0),\qquad \text{as}\ t\rightarrow
\infty .
\end{equation*}%
By the hypothesis of the Lemma we have ${\kappa }(\cdot ,0)\in RV(\rho )$
and ${\kappa }(0,\cdot )\in RV(\alpha \rho )$. To conclude we use
Proposition~{III.1} in \cite{bertoinbook} to deduce that 
\begin{equation*}
{\kappa }(1/t,0)\sim {\Gamma (1-\rho )}\overline{n}(\zeta >t),\qquad {\kappa 
}(0,1/t)\sim \frac{1}{\Gamma (1+\alpha \rho ){U}(t)},\qquad \text{as}\
t\rightarrow \infty .
\end{equation*}%
It follows that 
\begin{equation*}
U(c(t))\sim \frac{1}{\Gamma (1+\alpha \rho )\Gamma (1-\rho )\overline{n}%
(\zeta >t)},\qquad t\rightarrow \infty .
\end{equation*}%
By applying this result to the dual L\'{e}vy process $-X$ we get the first
asymptotic.

To prove (\ref{equivtails}) we observe that from Lemma \ref{LC} 
\begin{equation*}
V[0,t)=a+\int_{0}^{t}\underline{n}(\zeta >s)ds,\qquad t\geq 0.
\end{equation*}%
Applying again Proposition~{III.1} in \cite{bertoinbook} but this time to the
upward ladder time subordinator we get that 
\begin{equation*}
V[0,t)\sim \frac{1}{\Gamma (1+\rho )\Gamma (1-\rho )\overline{n}(\zeta >t)}%
,\qquad t\rightarrow \infty .
\end{equation*}%
Then by Karamata's theorem we have also that 
\begin{equation*}
\int_{0}^{t}\underline{n}(\zeta >s)ds\sim \frac{1}{1-\overline{\rho }}t%
\underline{n}(\zeta >t),\qquad t\rightarrow \infty .
\end{equation*}%
The result follows by equating the terms.
\end{proof}

A consequence of the fact that $(X(ts)/c(t),s\geq 0)$ converges in law to $%
(Y(s),s\geq 0)$, is that

\begin{lemma}
\label{CD}Assume that $X\in D(\alpha ,\rho ),$ with $\alpha \overline{\rho }%
\leq 1.$ Then as $t\rightarrow \infty $%
\begin{equation*}
\underline{n}(\epsilon _{t}\in c(t)dx|\zeta >t)\overset{D}{\rightarrow }%
\mathbb{P}(Z_{1}\in dx), 
\end{equation*}%
where $Z_{1}$ denotes the stable meander of length 1 at time 1 based on $Y.$
\end{lemma}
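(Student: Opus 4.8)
The plan is to deduce the convergence of the excursion law from the weak convergence of the rescaled process $Y^t_\cdot := X_{t\cdot}/c(t)$ to $Y$ together with the known convergence of the associated ladder/excursion objects, which was already invoked in the proof of Lemma~\ref{L3} via \cite{chaumontanddoney}. First I would record the scaling identity for excursion measures: if $\underline{n}^{(t)}$ denotes the excursion measure of $Y^t$ reflected at its infimum, then $\underline{n}^{(t)}$ is obtained from $\underline{n}$ by the deterministic time/space change $(\epsilon_s)_{s\ge 0}\mapsto (\epsilon_{ts}/c(t))_{s\ge 0}$, up to the multiplicative normalization of local time. In particular, for the one–dimensional distribution at time $1$,
\begin{equation*}
\underline{n}^{(t)}\bigl(\epsilon_1\in dx,\zeta>1\bigr)=\text{const}(t)\cdot\underline{n}\bigl(\epsilon_t\in c(t)dx,\zeta>t\bigr),
\end{equation*}
and dividing by the total mass $\underline{n}^{(t)}(\zeta>1)$ kills the normalizing constant, yielding exactly $\underline{n}(\epsilon_t\in c(t)dx\mid\zeta>t)$ on the left of the claimed display. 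So the lemma is equivalent to the statement that the time-$1$ marginal of the normalized excursion measure of $Y^t$, conditioned on $\{\zeta>1\}$, converges weakly to the law of $Z_1$, the time-$1$ marginal of the $Y$-meander of length $1$.

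Next I would argue that this last convergence follows from the convergence of the excursion \emph{processes}. By \cite{chaumontanddoney} (as cited above), the ladder processes of $Y^t$ converge to those of $Y$; combined with standard excursion theory this gives that, under the normalized conditional laws $\underline{n}^{(t)}(\,\cdot\mid\zeta>1)$, the whole path $(\epsilon_s)_{0\le s\le 1}$ converges in distribution (in the Skorokhod sense) to the normalized excursion of $Y$ of length exceeding $1$, killed at time $1$ — which by definition of the stable meander is precisely the meander of length $1$. Indeed, conditioning a stable excursion to have length bigger than $1$ and looking at it on $[0,1]$ is one of the classical constructions of the meander (see e.g. Chaumont's work on stable excursions and meanders). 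Evaluating the path at time $1$ and using the continuous mapping theorem then gives weak convergence of the one–dimensional distributions, i.e. the assertion of the lemma.

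The main obstacle is the passage from weak convergence of $Y^t\Rightarrow Y$ to weak convergence of the associated \emph{excursion measures} in a form strong enough to control the marginal at a fixed deterministic time $1$: excursion measures are infinite, so one must work with the conditioned (probability) laws $\underline{n}^{(t)}(\,\cdot\mid\zeta>1)$ and check that no mass escapes — e.g. that $\underline{n}^{(t)}(\zeta>1)$ stays bounded away from $0$ and $\infty$ (which follows from $\underline{n}(\zeta>\cdot)\in RV(-\overline\rho)$ and the regular variation of $c$, as in Lemma~\ref{L3}), and that $1$ is a.s. not a discontinuity of $\zeta$ under the limit law. Both points are routine given the regular variation already established, so I would state the reduction carefully, cite \cite{chaumontanddoney} and the meander construction for the process-level convergence, and finish with the continuous mapping theorem applied to evaluation at time $1$.
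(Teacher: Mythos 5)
Your reduction via scaling is fine: the excursion measure of $Y^t_\cdot=X_{t\cdot}/c(t)$ reflected at its infimum is indeed the image of $\underline{n}$ under the deterministic time--space change, up to a multiplicative constant that cancels upon conditioning on $\{\zeta>1\}$, so the lemma is equivalent to weak convergence of the time-$1$ marginals of $\underline{n}^{(t)}(\,\cdot\mid\zeta>1)$ to that of the $Y$-meander. The problem is the next step. You assert that the convergence of the ladder processes of $Y^t$ to those of $Y$ (from \cite{chaumontanddoney}) ``combined with standard excursion theory'' yields Skorokhod convergence of the whole conditioned excursion path to the meander. That implication is not standard: convergence of the ladder (time, height) subordinators controls the zero set and the values at ladder epochs of the reflected process, but it does not by itself control the law of the excursion straddling a fixed time, conditioned on its length. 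What you are invoking is precisely an invariance principle for the L\'evy meander, which is a statement strictly stronger than the lemma (process-level rather than one-dimensional) and itself requires proof or a precise citation; as written, you have reduced the claim to a stronger unproved claim. The tightness and ``no mass escapes'' issues you flag at the end are indeed routine, but they are not where the difficulty lies.

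The paper takes a different and more economical route that sidesteps the process-level statement entirely: it uses the absolute-continuity ($h$-transform) relation between $\underline{n}$ and the law $\mathbb{P}^{\uparrow}$ of $X$ conditioned to stay positive, namely $\underline{n}(\epsilon_t\in dy\,|\,\zeta>t)=C\,\mathbb{P}^{\uparrow}(X_t\in dy)/(\underline{n}(\zeta>t)U^{\ast}(y))$. With $y=c(t)x$, the regular variation of $U^{\ast}$ and the estimate $U^{\ast}(c(t))\underline{n}(\zeta>t)\to k_1$ from Lemma \ref{L3} turn the prefactor into $Cx^{-\alpha\overline{\rho}}$, and the only limit theorem needed is the one-dimensional convergence $\mathbb{P}^{\uparrow}(X_t\in c(t)dx)\to\mathbb{P}^{\uparrow}(Y_1\in dx)$, which is what \cite{chaumontanddoney} actually supplies; the identification $Cx^{-\alpha\overline{\rho}}\mathbb{P}^{\uparrow}(Y_1\in dx)=C\,\mathbb{P}(Z_1\in dx)$ and a mass-conservation argument finish the proof. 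If you want to salvage your route, you must either prove the meander invariance principle for $Y^t$ or replace the appeal to ``standard excursion theory'' by this $h$-transform argument.
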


\begin{proof}
Let $\mathbb{P}^{\uparrow }$ denote the law of "$X$ conditioned to stay
positive, starting from zero". (For a proper definition of this see e.g.
Chapter 8 of \cite{doneybook}.) Then, using the absolute continuity between $%
\underline{n}$ and $\mathbb{P}^{\uparrow },$ and Lemma~\ref{L3}, we have that over compact sets in $(0,\infty)$
\begin{eqnarray*}
\underline{n}(\epsilon _{t} \in c(t)dx|\zeta >t)&=&\frac{C\mathbb{P}%
^{\uparrow }(X_{t}\in c(t)dx)}{\underline{n}(\zeta >t)U^{\ast }(c(t)x)} \\
&\backsim &\frac{C\mathbb{P}^{\uparrow }(X_{t}\in c(t)dx)}{x^{\alpha 
\overline{\rho }}\underline{n}(\zeta >t)U^{\ast }(c(t))}\backsim \frac{C%
\mathbb{P}^{\uparrow }(X_{t}\in c(t)dx)}{x^{\alpha \overline{\rho }}} \\
&\rightarrow &Cx^{-\alpha \overline{\rho }}\mathbb{P}^{\uparrow }(Y_{1}\in
dx)=C\underline{n}^{Y}(\epsilon (1)\in dx|\zeta >1) \\
&=&C\mathbb{P}(Z_{1}\in dx).
\end{eqnarray*}%
Here the convergence of $\mathbb{P}^{\uparrow }(X_{t}\in
c(t)dx)$ to $\mathbb{P}^{\uparrow }(Y_{1}\in dx)$ is a consequence of
results in \cite{chaumontanddoney}. The above argument is valid over compact sets of $(0,\infty)$, thus proving the vague convergence. To get the convergence in distribution we should also verify that the mass is preserved, but this is straightforward from the fact that $\underline{n}(\epsilon_{t}\in(0,\infty)| \zeta>t)=1=\mathbb{P}(Z_{1}\in(0,\infty))$. This would finish the proof if we can guarantee that $C=1$, but this is a consequence of the normalization chosen.
\end{proof}

\section{Proof of Propositions \protect\ref{prop2} and \protect\ref{prop1}}

We start by proving the following Lemmas.

\begin{lemma}
\label{A}Put $\kappa _{t}^{\Delta }(x)=\underline{n}^{c}(t,\Delta ]+%
\underline{n}^{d}(t,\Delta ]=\underline{n}(\epsilon _{t}\in (x,x+\Delta ])$
and fix $\Delta _{0}>0.$ Then, for all values of $\alpha \overline{\rho }$,
for some constants $k_{4}$ and $t_{0}$ we have, uniformly for $0<\Delta \leq
\Delta _{0}$ and $0\leq x\leq c(t),$%
\begin{equation}
tc(t)\kappa _{t}^{\Delta }(x)\leq k_{4}\Delta U(x+\Delta )\text{ for }t\geq
t_{0}.  \label{3}
\end{equation}
\end{lemma}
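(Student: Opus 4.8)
\noindent\emph{Proof strategy.} Since for $x>0$ the event $\{\epsilon_t\in(x,x+\Delta]\}$ already forces $\zeta>t$, we have $\kappa_t^\Delta(x)=\underline n(\epsilon_t\in(x,x+\Delta],\zeta>t)$, and the plan is to estimate this from the Alili--Chaumont identity \eqref{DA}, which gives
\[
t\,\kappa_t^\Delta(x)=\int_0^t ds\int_{z\in[0,x]}\underline n_s(dz)\,\mathbb P_z\!\bigl(X_{t-s}\in(x,x+\Delta]\bigr)+a^\ast\,\mathbb P\!\bigl(X_t\in(x,x+\Delta]\bigr).
\]
Multiplying by $c(t)$, the last term is harmless: for $t\ge t_0$ it is at most $a^\ast k_0\Delta$ by \eqref{llt2}, and since \eqref{U} yields $U(x+\Delta)\ge U(0)=a^\ast$ this is $\le k_0\Delta\,U(x+\Delta)$. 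So I would split the remaining double integral at $s=t/2$ and bound $c(t)$ times each piece.

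On $0\le s\le t/2$ one has $t-s\ge t/2\ge t_0$ for $t$ large, so by translation invariance and \eqref{llt2}, $\mathbb P_z(X_{t-s}\in(x,x+\Delta])=\mathbb P(X_{t-s}\in(x-z,x-z+\Delta])\le k_0\Delta/c(t-s)$ uniformly in $z$; and since $c\in RV(\eta)$ with $\eta>0$, $c(t)/c(t-s)\le c(t)/c(t/2)$ is bounded above by a constant for all large $t$. Hence, using \eqref{U} once more,
\[
c(t)\int_0^{t/2}\!\! ds\int_{[0,x]}\underline n_s(dz)\,\mathbb P_z\!\bigl(X_{t-s}\in(x,x+\Delta]\bigr)\ \le\ C\Delta\int_0^{\infty}\!\! ds\int_{[0,x]}\underline n_s(dz)=C\Delta\,(U(x)-a^\ast)\ \le\ C\Delta\,U(x+\Delta),
\]
which is of the required form.

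The contribution of $s\in[t/2,t]$, i.e.\ of $u:=t-s\in[0,t/2]$, is the delicate part: there $c(t)/c(t-s)$ is unbounded, and for $u\downarrow 0$ one cannot even rely on \eqref{llt2} (the density of $X_u$ may blow up faster than $1/c(u)$ when $X$ has few small jumps). The key is that on this range the constraint $z\le x\le c(t)$ is genuinely restrictive, because $\underline n(\epsilon_s\in[0,x],\zeta>s)$ is small relative to $\underline n(\zeta>s)$: by Lemma \ref{CD} it is asymptotically $\underline n(\zeta>s)\,\mathbb P\!\left(Z_1\le x/c(s)\right)$, and $\mathbb P(Z_1\le\varepsilon)$ is of order $\varepsilon^{\,\alpha\rho+1}$ as $\varepsilon\downarrow0$; since $c(s)\asymp c(t)$ throughout, this is a gain of order $(x/c(t))^{\,\alpha\rho+1}$. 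The plan is to carry out the $z$-integration against the profile supplied by \eqref{llt1} (so that the singular factor $1/c(u)$ at small $u$ is absorbed by integrating $f$ over a $z$-window of width of order $c(u)$, while the large-$z$ part is controlled by the stable tail of $X_u$ combined with the power decay of $\underline n(\epsilon_s\in[0,\cdot\,],\zeta>s)$), and then feed in the regular variation of $U$ with index $\alpha\rho$ and the relations $U(c(t))\sim k_2/\overline n(\zeta>t)$ and $t\,\underline n(\zeta>t)\,\overline n(\zeta>t)\to k_3$ from Lemma \ref{L3} and \eqref{equivtails}; the net effect is a bound of the form $C\Delta\,t\,\underline n(\zeta>t)\,(x/c(t))^{\alpha\rho}\asymp C\Delta\,U(x)\le C\Delta\,U(x+\Delta)$ for $x\le c(t)$. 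Collecting the three estimates yields \eqref{3} with $k_4$ the sum of the constants produced.

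The hard part is clearly this last step, and it is exactly where the hypothesis $x\le c(t)$ is needed; for $x\gtrsim c(t)$ everything is easy, since applying the Markov property under $\underline n$ at the fixed time $t/2$ together with \eqref{llt2} already gives $\kappa_t^\Delta(x)\le (k_0\Delta/c(t/2))\,\underline n(\zeta>t/2)$, whence $t\,c(t)\,\kappa_t^\Delta(x)\le C\Delta\,t\,\underline n(\zeta>t)\asymp C\Delta\,U(c(t))\le C\Delta\,U(x+\Delta)$ by \eqref{equivtails} and \eqref{rt}. The two technical points requiring care are the uniformity in $x$ of the estimate $\underline n(\epsilon_s\in[0,x],\zeta>s)\lesssim\underline n(\zeta>s)(x/c(s))^{\alpha\rho+1}$, which has to be upgraded from the weak convergence in Lemma \ref{CD} (e.g.\ via \eqref{U} and regular variation), and the window $u\downarrow 0$, where the $z$-integration must be kept coupled to the local estimate for $\mathbb P(X_u\in(x-z,x-z+\Delta])$ rather than bounding the two factors separately.
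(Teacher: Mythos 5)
Your architecture is the right one (the Alili--Chaumont identity \eqref{DA}, a split of the time integral into a regime where the free-process time $t-s$ is large and one where it is small, the local limit theorem \eqref{llt2} plus \eqref{U} for the first regime, and the easy argument via the Markov property at $t/2$ when $x\gtrsim c(t)$); the first and last of these pieces are correct and coincide with what the paper does. But the treatment of the hard regime $s\in[t/2,t]$ is only a plan, and as a plan it has two genuine gaps. First, the uniform entrance-law estimate $\underline n(\epsilon_s\in[0,x],\zeta>s)\lesssim\underline n(\zeta>s)\,\mathbb P(Z_1\le x/c(s))$ that your argument hinges on is essentially Proposition \ref{prop2}/Corollary \ref{Q}, and in the paper those are \emph{consequences} of this lemma; upgrading the weak convergence of Lemma \ref{CD} to a uniform bound near $0$ is not routine and you give no independent route to it, so the argument is circular as it stands. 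Second, in the window $u=t-s\downarrow 0$ you have no control whatsoever on $\mathbb P(X_u\in(x-z,x-z+\Delta])$ — it can be of order $1$ — so "integrating $f$ over a $z$-window of width $c(u)$" and "the stable tail of $X_u$" are simply not available there, and a total-mass bound on $\underline n_s([0,x])$ alone cannot absorb a $u$-integral of an $O(1)$ integrand.

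The idea you are missing is to swap which factor is controlled pointwise in this regime. The crude bound you derive for $x\ge c(t)$ (Markov property at $t/2$ plus \eqref{llt2}), namely $c(t)\kappa_t^{\Delta}(x)\le k_5\Delta\,\underline n(\zeta>t)$ uniformly in $x\ge 0$, is exactly the key: for $s\ge \delta t$ the \emph{excursion} factor satisfies $\underline n_s\bigl([(x-y)^+,x-y+\Delta)\bigr)=\kappa_s^{\Delta}((x-y)^+)\le k_5\Delta\,\underline n(\zeta>\delta t)/c(\delta t)$ pointwise, and the free process then enters only through the occupation integral $\int_0^{(1-\delta)t}\mathbb P(0<X_v\le x+\Delta)\,dv$. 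That integral is bounded by $U(x+\Delta)\int_0^{(1-\delta)t}\overline n(\zeta>s)\,ds\asymp U(x+\Delta)\,t\,\overline n(\zeta>t)$ using the identity \eqref{lastexit0} together with the subadditivity of $U$ — no local estimate on $X_v$ for small $v$ is needed at all — and \eqref{equivtails} then converts $t\,\overline n(\zeta>t)\cdot\underline n(\zeta>\delta t)$ into a constant, yielding the factor $C\Delta\,U(x+\Delta)/c(t)$ directly. Until the $s\in[t/2,t]$ piece is handled by some such device, your proof is incomplete.
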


\begin{proof}
This is similar to the proof of Lemma 20 in \cite{vw}. First we use the
bound (\ref{llt2}) from Proposition \ref{local} to get 
\begin{eqnarray}
c(t)\kappa _{t}^{\Delta }(x) &=&c(t)\int_{y>0}\underline{n}(\epsilon
(t/2)\in dy)\mathbb{P}_{y}(X_{t/2}\in (x,x+\Delta ],T_{0}>t/2)  \notag \\
&\leq &\frac{k_{0}\Delta c(t)}{c(t/2)}\int_{y>0}\underline{n}(\epsilon
(t/2)\in dy)=\frac{k_{0}\Delta c(t)\underline{n}(\zeta >t/2)}{c(t/2)}  \notag
\\
&\leq &k_{5}\Delta \underline{n}(\zeta >t).  \label{4}
\end{eqnarray}%
Next, it is immediate from equation (\ref{DA}) that%
\begin{equation}
t\kappa _{t}^{\Delta }(x)=\int_{0}^{t}{du}\int_{z=x}^{x+\Delta }\int_{y=0}^{z}%
\mathbb{P(}X_{t-u}\in dy)\underline{n}_{u}(dz-y)+a^{\ast }\mathbb{P}%
(X_{t}\in (x,x+\Delta ]).  \label{ac2}
\end{equation}%
It is useful to note that we can write the inner double integral either as%
\begin{equation*}
\int_{y=0}^{x+\Delta }\mathbb{P(}X_{t-u}\in dy)\underline{n}%
_{u}([(x-y)^{+},x-y+\Delta )),
\end{equation*}%
or as%
\begin{eqnarray*}
&&\int_{w=0}^{x+\Delta }\underline{n}_{u}(dw)\int_{y=(x-w)^{+}}^{x-w+\Delta }%
\mathbb{P(}X_{t-u}\in dy) \\
&=&\int_{w=0}^{x+\Delta }\underline{n}_{u}(dw)\mathbb{P(}X_{t-u}\in \lbrack
(x-w)^{+},x-w+\Delta )).
\end{eqnarray*}%
So we take $\delta \in (0,1)$ and write $t\kappa _{t}^{\Delta
}(x)=J_{1}^{\delta }+J_{2}^{\delta }+a^{\ast }\mathbb{P}(X_{t}\in
(x,x+\Delta ]),$ where 
\begin{eqnarray}
J_{1}^{\delta } &=&\int_{0}^{\delta t}du\int_{w=0}^{x+\Delta }\underline{n}%
_{u}(dw)\mathbb{P(}X_{t-u}\in \lbrack (x-w)^{+},x-w+\Delta )){\color{red},}  \notag \\
J_{2}^{\delta } &=&\int_{\delta t}^{t}du\int_{y=0}^{x+\Delta }\mathbb{P(}%
X_{t-u}\in dy)\underline{n}_{u}([(x-y)^{+},x-y+\Delta )),\text{ and}  \notag
\\
&&a^{\ast }\mathbb{P}(X_{t}\in (x,x+\Delta ])=\frac{a^{\ast }\Delta }{c(t)}%
\{f(x/c(t))+o(1)\}\text{ },  \label{*}
\end{eqnarray}%
where we have used (\ref{llt1}). We see immediately from (\ref{4}) that 
\begin{equation*}
J_{2}^{\delta }\leq \frac{k_{5}\Delta \underline{n}(\zeta >\delta t)}{%
c(\delta t)}\int_{0}^{(1-\delta )t}\mathbb{P(}0<X_{u}\leq x+\Delta )du
\end{equation*}%
From (\ref{lastexit0}) and the subadditivity of $U$ we have, for $y>0$ 
\begin{eqnarray*}
\int_{0}^{(1-\delta )t}\mathbb{P(}0 <X_{u}\leq y)du&=&\int_{0}^{(1-\delta
)t}du\int_{s=0}^{u}ds\int_{z=0}^{\infty }\overline{n}_{s}(dz)\underline{n}%
_{u-s}([(z-y)^{+},z]) \\
&=&\int_{s=0}^{(1-\delta )t}ds\int_{v=0}^{(1-\delta
)t-s}dv\int_{z=0}^{\infty }\overline{n}_{s}(dz)\underline{n}%
_{v}([(z-y)^{+},z]) \\
&\leq &\int_{0}^{(1-\delta )t}ds\int_{z=0}^{\infty }\overline{n}%
_{s}(dz)[U(z)-U((z-y)^{+})] \\
&\leq &U(y)\int_{0}^{(1-\delta )t}ds\overline{n}(\zeta >s)\backsim \frac{%
U(y)(1-\delta )^{\overline{\rho }}t\overline{n}(\zeta >t)}{\overline{\rho }},
\end{eqnarray*}%
and using this with $y=x+\Delta $ and (\ref{equivtails}) gives%
\begin{equation}
\lim \sup_{t\rightarrow \infty }\frac{c(t)J_{2}}{\Delta U(x+\Delta )}\leq 
\frac{k_{5}k_{3}(1-\delta )^{\overline{\rho }}}{\overline{\rho }\delta ^{%
\overline{\rho }+\eta }}.  \label{3.1}
\end{equation}%
For the other term, we again use the bound (\ref{llt2}) to get%
\begin{eqnarray}
J_{1}^{\delta } &\leq &k_{0}\Delta \int_{0}^{t\delta }\frac{du}{c(t-u)}%
\int_{w=0}^{x+\Delta }\underline{n}_{u}(dw)  \notag \\
&\leq &\frac{k_{0}\Delta (U(x+\Delta )-a^{\ast })}{c((1-\delta )t)}\backsim 
\frac{k_{0}\Delta (U(x+\Delta )-a^{\ast })}{(1-\delta )^{\eta }c(t)}.
\label{3.2}
\end{eqnarray}%
Choosing $\delta =1/2,$ the result follows from (\ref{*}), (\ref{3.1}) and (%
\ref{3.2}).
\end{proof}

\begin{corollary}
\label{M}The bound (\ref{3}), with a suitable $k,$ holds uniformly in $x\geq
0.$
\end{corollary}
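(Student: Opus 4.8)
The plan is short: Lemma \ref{A} already delivers the bound (\ref{3}) for $0\leq x\leq c(t)$, so the only thing left to do is treat the range $x>c(t)$. The key observation is that the intermediate estimate (\ref{4}) obtained inside the proof of Lemma \ref{A}, namely $c(t)\kappa_t^\Delta(x)\leq k_5\Delta\,\underline{n}(\zeta>t)$, was derived purely by conditioning at time $t/2$ via the Markov property under $\underline{n}$ and applying the uniform local bound (\ref{llt2}) from Proposition \ref{local}; neither step uses the restriction $x\leq c(t)$. Hence (\ref{4}) in fact holds uniformly in \emph{all} $x\geq 0$ (and $\Delta\in(0,\Delta_0]$) once $t$ is large enough, and multiplying by $t$ gives $tc(t)\kappa_t^\Delta(x)\leq k_5\Delta\, t\underline{n}(\zeta>t)$ for every $x\geq 0$.

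It then remains to absorb the factor $t\underline{n}(\zeta>t)$ into $U(x+\Delta)$ on the range $x>c(t)$. For this I would combine the two assertions of Lemma \ref{L3}: from (\ref{equivtails}) one has $t\underline{n}(\zeta>t)\sim k_3/\overline{n}(\zeta>t)$, while from (\ref{rt}) one has $U(c(t))\sim k_2/\overline{n}(\zeta>t)$; dividing, $t\underline{n}(\zeta>t)\sim (k_3/k_2)\,U(c(t))$, so there are a constant $C$ and a time $t_1$ with $t\underline{n}(\zeta>t)\leq C\,U(c(t))$ for all $t\geq t_1$. Since $U$ is non-decreasing and $x>c(t)$ forces $U(c(t))\leq U(x)\leq U(x+\Delta)$, we get $tc(t)\kappa_t^\Delta(x)\leq k_5 C\,\Delta\, U(x+\Delta)$ on this range.

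Combining the two ranges, with $k=\max\{k_4,\,k_5 C\}$ and the threshold time taken as the larger of the one in Lemma \ref{A} and $t_1$, the bound (\ref{3}) holds uniformly in $x\geq 0$, which is the claim. I do not expect any genuine obstacle here; the one point that deserves a careful word is that the passage to (\ref{4}) is honestly uniform in $x$ — but this is immediate from inspecting its derivation, since the only $x$-dependent quantity there, $\mathbb{P}_y(X_{t/2}\in(x,x+\Delta],\,T_0>t/2)$, is bounded above by $\mathbb{P}(X_{t/2}\in(x-y,x-y+\Delta])$ and then controlled uniformly by (\ref{llt2}).
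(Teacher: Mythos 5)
Your argument is correct and is essentially the paper's own proof: the authors likewise note that the bound (\ref{4}) holds for all $x\geq 0$, giving $tc(t)\kappa_t^\Delta(x)\leq k_5\Delta\, t\underline{n}(\zeta>t)\sim k_3k_5\Delta/\overline{n}(\zeta>t)$ by (\ref{equivtails}), and then for $x\geq c(t)$ use $U(x+\Delta)\geq U(c(t))\sim k_2/\overline{n}(\zeta>t)$ from (\ref{rt}). Your extra remark checking the $x$-uniformity of (\ref{4}) is a small but welcome clarification that the paper leaves implicit.
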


\begin{proof}
Just note that, by (\ref{4}) $tc(t)\kappa _{t}^{\Delta }(x)\leq k_{5}\Delta t%
\underline{n}(\zeta >t)\backsim k_{3}k_{5}\Delta /\overline{n}(\zeta >t)$
and if $x\geq c(t)$ we have $U(x+\Delta )\geq U(c(t))k_{2}/\overline{n}%
(\zeta >t).$
\end{proof}

We can now prove Proposition \ref{prop2}, which we restate as

\begin{proposition}
\label{prop21}Uniformly in $\Delta $ and uniformly as $x/c(t)\rightarrow 0,$%
\begin{equation*}
tc(t)\kappa _{t}^{\Delta }(x)\backsim f(0)\int_{x}^{x+\Delta
}U(y)dy=:f(0)U^{\Delta }(x)\text{ as }t\rightarrow \infty .
\end{equation*}
\end{proposition}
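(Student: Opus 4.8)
The proof starts from the Alili--Chaumont identity (\ref{DA}), rewritten as in (\ref{ac2}):
\[
t\kappa_t^\Delta(x)=\int_0^t du\int_{w=0}^{x+\Delta}\underline n_u(dw)\,\mathbb P\bigl(X_{t-u}\in[(x-w)^+,x-w+\Delta)\bigr)+a^\ast\,\mathbb P\bigl(X_t\in(x,x+\Delta]\bigr).
\]
The plan is to multiply by $c(t)$ and show that the double integral tends to $f(0)\bigl(U^\Delta(x)-a^\ast\Delta\bigr)$, while, since $x/c(t)\to0$ and $f$ is continuous at $0$, the local limit theorem (\ref{llt1}) makes the last term equal to $a^\ast\Delta f(0)+o(1)$; adding the two gives $f(0)U^\Delta(x)$. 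The observation that organises the double integral is that the interval $[(x-w)^+,x-w+\Delta)$ has length $\Delta$ when $w\le x$ and length $x+\Delta-w$ when $x<w\le x+\Delta$ (and is empty for $w>x+\Delta$), so that, for every $u$,
\[
\Delta\,\underline n_u([0,x])+\int_x^{x+\Delta}(x+\Delta-w)\,\underline n_u(dw)=\int_x^{x+\Delta}\underline n_u([0,\xi])\,d\xi .
\]

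For the principal contribution I would fix a small $\varepsilon>0$ and restrict the $u$-integral to $[0,\varepsilon t]$. There $t-u\ge(1-\varepsilon)t\to\infty$, and since $(x-w)^+\le x+\Delta=o(c(t))$, Proposition \ref{local} yields, uniformly in $w\in[0,x+\Delta]$, that $\mathbb P(X_{t-u}\in[(x-w)^+,x-w+\Delta))$ equals $c(t-u)^{-1}$ times the interval length times $(f(0)+o(1))$. Using this, the displayed identity, Fubini, and $\int_0^\infty\underline n_u([0,\xi])\,du=U(\xi)-a^\ast$ from (\ref{U}), the $[0,\varepsilon t]$ part of $c(t)t\kappa_t^\Delta(x)$ becomes
\[
(f(0)+o(1))\,c(t)\int_0^{\varepsilon t}\frac{du}{c(t-u)}\int_x^{x+\Delta}\underline n_u([0,\xi])\,d\xi ,
\]
where $1\le c(t)/c(t-u)\le c(t)/c((1-\varepsilon)t)=(1-\varepsilon)^{-\eta}(1+o(1))$ and $\int_0^{\varepsilon t}\underline n_u([0,\xi])\,du=(U(\xi)-a^\ast)-R_t(\xi)$ with $R_t(\xi):=\int_{\varepsilon t}^\infty\underline n_u([0,\xi])\,du$.

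The new estimate the argument needs is that $R_t$ is negligible. Writing $\underline n_u([0,\xi])$ as a sum of $\kappa_u$-masses over a fine partition of $(0,\xi]$ and applying Lemma \ref{A} and Corollary \ref{M} gives $\underline n_u([0,\xi])\le k\,\xi\,U(\xi)/(u c(u))$ for $u\ge u_0$ and all $\xi\ge0$; since $u c(u)\in RV(1+\eta)$, Karamata's theorem gives $R_t(\xi)\le k\xi U(\xi)\int_{\varepsilon t}^\infty du/(u c(u))\le C_\varepsilon\,\xi U(\xi)/c(t)$ for $t$ large, hence $\int_x^{x+\Delta}R_t(\xi)\,d\xi\le C_\varepsilon\frac{x+\Delta}{c(t)}U^\Delta(x)=o\bigl(U^\Delta(x)\bigr)$ because $x/c(t)\to0$. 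The same argument handles $u\in[\varepsilon t,\delta t]$ (which, after the LLT substitution, is $\le(1-\delta)^{-\eta}(f(0)+o(1))\int_x^{x+\Delta}R_t(\xi)\,d\xi=o(U^\Delta(x))$), while $u\in[\delta t,t]$ is precisely $c(t)J_2^\delta$, estimated in the proof of Lemma \ref{A} by $c(t)J_2^\delta\le C(1-\delta)^{\overline\rho}\,\Delta\,U(x+\Delta)(1+o(1))$. For a lower bound one keeps only the $[0,\varepsilon t]$ part (with $c(t)/c(t-u)\ge1$) and the $a^\ast$-term; for an upper bound one sums the four pieces and uses $(1-\varepsilon)^{-\eta}(U^\Delta(x)-a^\ast\Delta)+a^\ast\Delta\le(1-\varepsilon)^{-\eta}U^\Delta(x)$. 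This gives $\liminf_t c(t)t\kappa_t^\Delta(x)/U^\Delta(x)\ge f(0)$ and $\limsup_t c(t)t\kappa_t^\Delta(x)/U^\Delta(x)\le(1-\varepsilon)^{-\eta}f(0)+C(1-\delta)^{\overline\rho}\,\Delta U(x+\Delta)/U^\Delta(x)$; letting $\delta\uparrow1$ and $\varepsilon\downarrow0$ finishes the proof, provided the ratio $\Delta U(x+\Delta)/U^\Delta(x)$ stays bounded.

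The hard part is exactly this last boundedness, together with the requirement that all the estimates be uniform in $\Delta\in(0,\Delta_0]$ and in $x$ with $x/c(t)\to0$. Because the target $f(0)U^\Delta(x)$ can be arbitrarily small (when $\Delta\downarrow0$, and $x$ near $0$ if $a^\ast=0$), every error term must be compared with $U^\Delta(x)$ itself; in particular one needs $\Delta\,U(x+\Delta)\le C\,U^\Delta(x)$ uniformly, which follows from monotonicity and subadditivity of $U$ when $x$ is bounded away from $0$ but requires the regular variation of the renewal function $U$ near $0$ otherwise, and one needs uniform versions of the $o(1)$'s supplied by Proposition \ref{local} and of Karamata's estimate. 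Everything else is routine bookkeeping of the kind already done in Lemma \ref{A}.
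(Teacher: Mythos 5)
Your argument is essentially the paper's own proof: the same starting identity (\ref{ac2}), the same reorganisation of the inner integral into $\int_x^{x+\Delta}\underline n_u([0,\xi])\,d\xi$, the local limit theorem on the range where $t-u$ is of order $t$, and Lemma \ref{A}/Corollary \ref{M} plus Karamata to kill the tail of the $u$-integral; the only cosmetic difference is that you dispose of the $[\delta t,t]$ piece by letting $\delta\uparrow1$ in the bound (\ref{3.1}), whereas the paper keeps $\delta$ small and shows $J_2^\delta=o(\cdot)$ directly via $\int_0^{(1-\delta)t}\mathbb P(X_u\in(0,x+\Delta])\,du=o(t)$. One correction to your closing worry: the uniform bound $\Delta\,U(x+\Delta)\le C\,U^\Delta(x)$ does \emph{not} require any regular variation of $U$ near $0$ (which is neither assumed nor generally available); it holds for every renewal function, uniformly in $x\ge0$ and $\Delta>0$, by subadditivity and Erickson's doubling bound $U(2y)\le CU(y)$ --- split into the cases $\Delta\le x$, where $U^\Delta(x)\ge\Delta U(x)$ and $U(x+\Delta)\le U(2x)\le CU(x)$, and $x\le\Delta$, where $U^\Delta(x)\ge(\Delta/2)U(\Delta/2)$ and $U(x+\Delta)\le U(2\Delta)\le CU(\Delta/2)$. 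This is exactly how the paper closes that step, so your proof is complete once that clause is replaced.
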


\begin{proof}
We again use the representation $t\kappa _{t}^{\Delta }(x)=J_{1}^{\delta
}+J_{2}^{\delta }+a^{\ast }\mathbb{P}(X_{t}\in (x,x+\Delta ]),$ but this
time we will be choosing $\delta $ small. Recall that the behaviour of the
final term here is given by (\ref{*}). Using Proposition \ref{local}, we get
that as $t\rightarrow \infty ,$ uniformly in $\Delta $ and $\delta ,$%
\begin{eqnarray*}
J_{1}^{\delta } &\sim &\int_{0}^{\delta t}\frac{f(0)}{c(t-u)}%
du\int_{z=0}^{x+\Delta }\underline{n}_{u}(dz)(x-z+\Delta -(x-z)^{+}) \\
&=&\int_{0}^{\delta t}\frac{f(0)}{c(t-u)}du\int_{z>0}^{x+\Delta }(x-z+\Delta
-(x-z)^{+})W(du,dz).
\end{eqnarray*}%
A simple calculation gives 
\begin{eqnarray*}
&&\int_{z>0}^{x+\Delta }(x-z+\Delta -(x-z)^{+})W(du,dz) \\
&=&\Delta W(du,(0,x])+\int_{y=0}^{\Delta }(\Delta -y)W(du,x+dy) \\
&=&\int_{y=0}^{\Delta }W(du,(0,x+\Delta -y))dy,
\end{eqnarray*}%
and we see that\ $J_{1}^{\delta }$ is asymptotically bounded below by 
\begin{equation*}
\frac{f(0)}{c(t)}\left( U^{\Delta }(x)-\Delta a^{\ast }-\int_{\delta
t}^{\infty }\kappa _{u}^{\Delta }(x)du\right) .
\end{equation*}%
The same argument gives the asymptotic upper bound of 
\begin{equation*}
\frac{f(0)}{c(t(1-\delta ))}\left( U^{\Delta }(x)-\Delta a^{\ast
}-\int_{\delta t}^{\infty }\kappa _{u}^{\Delta }(x)du\right) .
\end{equation*}%
By Corollary \ref{M}, for each fixed $\delta >0$ we have the asymptotic
bound 
\begin{eqnarray*}
\int_{\delta t}^{\infty }\kappa _{u}^{\Delta }(x)du &\leq &k_{4}\Delta
U(x+\Delta )\int_{\delta t}^{\infty }du/(uc(u)) \\
&\backsim &C(\delta )\Delta U(x+\Delta )/c(t) \\
&=&o(1)\Delta U(x+\Delta )=o(1)U^{\Delta }(x),
\end{eqnarray*}%
where we observe that Erickson's \cite{erickson} bounds give
\begin{equation*}
\frac{\Delta U(x+\Delta )}{U^{\Delta }(x)}\leq \left( 
\begin{array}{cc}
\frac{\Delta U(x+\Delta )}{\Delta U(x)}\leq C & \text{for }\Delta \leq x, \\ 
\frac{\Delta U(2\Delta )}{\Delta /2U(\Delta /2)}\leq C & \text{for }x\leq
\Delta .%
\end{array}%
\right.
\end{equation*}%
Hence 
\begin{equation*}
c(t)J_{1}^{\delta }+a^{\ast }\mathbb{P}(X_{t}\in (x,x+\Delta ])\overset{%
t,\delta }{\backsim }f(0)U^{\Delta }(x),
\end{equation*}%
where the notation $A\overset{t,\delta }{\backsim }B$ is shorthand for%
\begin{equation*}
\lim_{\delta \downarrow 0}\limsup_{t\rightarrow \infty }\frac{A}{B}%
=\lim_{\delta \downarrow 0}\liminf_{t\rightarrow \infty }\frac{A}{B}=1.
\end{equation*}%
Also, for each fixed $\delta >0,$ 
\begin{eqnarray*}
J_{2}^{\delta } &\leq &\int_{0}^{(1-\delta )t}\int_{y=0}^{x+\Delta }\mathbb{%
P(}X_{u}\in dy)\kappa _{t-u}^{\Delta }((x-y)^{+})du \\
&\leq &\frac{C(\delta )\Delta }{tc(t)}\int_{0}^{(1-\delta
)t}\int_{y=0}^{x+\Delta }\mathbb{P(}X_{u}\in dy)U((x-y)^{+}+\Delta )du \\
&\leq &\frac{C(\delta )\Delta U(x+\Delta )}{tc(t)}\int_{0}^{(1-\delta )t}%
\mathbb{P(}X_{u}\in (0,x+\Delta ])du.
\end{eqnarray*}%
Since 
\begin{eqnarray*}
\int_{0}^{(1-\delta )t}\mathbb{P(}X_{u} \in (0,x+\Delta ])du&\leq&
t_{0}+\int_{t_{0}}^{(1-\delta )t}\mathbb{P(}X_{u}\in (0,x+\Delta ])du \\
&\leq &t_{0}+C\int_{t_{0}}^{(1-\delta )t}\frac{x+\Delta }{c(u)}du\\ &\leq& t_{0}+%
\frac{C(\delta )(x+\Delta )t}{c(t)}=o(t),
\end{eqnarray*}%
the result follows.
\end{proof}

\begin{corollary}
\label{Q}Uniformly for $0\leq x\leq y=o(c(t))$ we have 
\begin{equation*}
tc(t)\underline{n}_{t}((x,y])\backsim f(0)\int_{x}^{y}U(y)dy\text{ as }%
t\rightarrow \infty .
\end{equation*}
\end{corollary}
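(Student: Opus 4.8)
The plan is to obtain Corollary~\ref{Q} from Proposition~\ref{prop21}, which restates Proposition~\ref{prop2}, by a simple subdivision of the interval $(x,y]$. The only delicate point is that, since $y$ may grow with $t$, the subdivision will contain a number of pieces tending to infinity, so we must exploit the fact that Proposition~\ref{prop21} provides a \emph{relative} error bound that is uniform both in the mesh parameter and in the left endpoint.

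First I would record the form of Proposition~\ref{prop21} to be used: taking $\Delta_0=1$ there, for every $\varepsilon>0$ there are $\delta>0$ and $t_1<\infty$ with
\[
(1-\varepsilon)\,f(0)\,U^{\Delta}(w)\;\le\; tc(t)\,\kappa_t^{\Delta}(w)\;\le\;(1+\varepsilon)\,f(0)\,U^{\Delta}(w)
\]
for all $t\ge t_1$, all $\Delta\in(0,1]$ and all $w$ with $0\le w\le \delta c(t)$; here $U^{\Delta}(w)=\int_w^{w+\Delta}U(z)\,dz>0$ since $U$ is a nondecreasing, strictly positive renewal function. Since in Corollary~\ref{Q} we have $x\le y=o(c(t))$, for $t$ large enough $0\le x\le y\le \delta c(t)$, so the displayed bounds are available with $w$ ranging over any subset of $[x,y]$.

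Next, assuming without loss of generality $x<y$, I would split $(x,y]$ into consecutive intervals $(x_{j-1},x_j]$, $1\le j\le N$, with $x_0=x$, $x_N=y$ and $\Delta_j:=x_j-x_{j-1}\le 1$ (so $N\le y-x+1$, in particular finite). Since $\underline{n}_t$ is a measure and $\kappa_t^{\Delta_j}(x_{j-1})=\underline{n}_t((x_{j-1},x_j])$,
\[
tc(t)\,\underline{n}_t((x,y])\;=\;\sum_{j=1}^{N} tc(t)\,\kappa_t^{\Delta_j}(x_{j-1});
\]
each left endpoint $x_{j-1}$ lies in $[x,y]\subseteq[0,\delta c(t)]$, so each summand is squeezed between $(1-\varepsilon)f(0)U^{\Delta_j}(x_{j-1})$ and $(1+\varepsilon)f(0)U^{\Delta_j}(x_{j-1})$. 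Summing and using $\sum_{j} U^{\Delta_j}(x_{j-1})=\sum_j\int_{x_{j-1}}^{x_j}U(z)\,dz=\int_x^y U(z)\,dz$ gives
\[
(1-\varepsilon)\,f(0)\!\int_x^y\! U(z)\,dz\;\le\; tc(t)\,\underline{n}_t((x,y])\;\le\;(1+\varepsilon)\,f(0)\!\int_x^y\! U(z)\,dz
\]
for all $t\ge t_1$ and all $0\le x\le y\le\delta c(t)$. Letting $\varepsilon\downarrow 0$ yields the asserted uniform equivalence.

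The only genuine obstacle here is conceptual: one must resist applying Proposition~\ref{prop21} in an additive form with an $o(1)$ error per interval, since such errors would accumulate uncontrollably as $N=N(t)\to\infty$; the multiplicative form recorded above is exactly what legitimises the summation. Everything else is routine, using only the finiteness of $\underline{n}_t$ on the relevant range (Corollary~\ref{M}) and additivity of measures and of the integral of $U$ over the partition.
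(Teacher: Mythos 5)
Your proof is correct and follows essentially the same route as the paper, which also splits $(x,y]$ into disjoint intervals of length at most $1$ and applies Proposition \ref{prop21} to each piece. Your explicit remark that one must use the uniform \emph{multiplicative} error bound (rather than summing additive $o(1)$ errors over a growing number of subintervals) is exactly the point that makes the paper's one-line argument legitimate.
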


\begin{proof}
If $y\leq x=1$ this is immediate from Proposition \ref{prop21}, and
otherwise we split $(x,y]$ into disjoint intervals of length $\leq 1$ and
apply the same proposition to each interval$.$
\end{proof}

In preparation for the next proof, we have:

\begin{lemma}
\label{mea}The density function ${g}$ of the stable meander $Z_{1}$ satisfies
the identity%
\begin{equation}
{g}(x)=\int_{0}^{1}ds\int_{y=0}^{x}s^{-\eta -\overline{\rho }}{g}(s^{-\eta
}y)f_{1-s}((x-y))dy,  \label{id}
\end{equation}%
where $f_{t}$ denotes the density function of $Y_{t}.$
\end{lemma}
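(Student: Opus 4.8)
The plan is to derive the identity (\ref{id}) by applying the decomposition (\ref{DA}) from Lemma~\ref{lemma:1}(iii) to the excursion process and then passing to the scaling limit, using Lemma~\ref{CD} and Proposition~\ref{local} to identify all the terms. Concretely, fix $x>0$ and $\Delta>0$ small, and start from
\[
t\,\underline{n}(\epsilon_{t}\in(x c(t),(x+\Delta)c(t)])
=\int_{0}^{t}du\int_{z\in[0,\,(x+\Delta)c(t)]}\underline{n}_{u}(dz)\,\mathbb{P}_{z}\big(X_{t-u}\in(xc(t),(x+\Delta)c(t)]\big)+a^{\ast}\mathbb{P}\big(X_{t}\in(xc(t),(x+\Delta)c(t)]\big).
\]
Divide both sides by $\Delta\,\underline{n}(\zeta>t)$ and let $t\to\infty$. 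By Lemma~\ref{CD} the left-hand side converges (after using $t\,\underline{n}(\zeta>t)$ regularly varying and the uniform local estimate of Proposition~\ref{prop1}) to a constant multiple of $g(x)$; the final $a^{\ast}$ term is $O(1/c(t))$ relative to $\underline{n}(\zeta>t)$ and hence vanishes since $1/c(t)=o(\underline{n}(\zeta>t))$ when $\alpha\overline\rho\le 1$ — actually one must be slightly careful here, but the term is handled exactly as the $a^{\ast}\mathbb P(X_t\in\cdot)$ terms are handled in the proof of Proposition~\ref{prop21}.

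The core of the argument is the change of variables $u=ts$, $z=c(t)y'$ in the double integral. Writing $\underline n_u(dz)=\underline n(\zeta>u)\,\underline n(\epsilon_u\in dz\mid\zeta>u)$ and using Proposition~\ref{prop1} in the form $c(u)\underline n(\epsilon_u\in (c(u)y',\cdot)\mid\zeta>u)\approx g(y')\,dy'$, together with the regular variation $\underline n(\zeta>ts)\sim s^{-\overline\rho}\underline n(\zeta>t)$ and $c(ts)\sim s^{\eta}c(t)$, the factor $\underline n_u(dz)$ contributes $s^{-\overline\rho-\eta}g(s^{-\eta}y)\,dy$ after one rewrites $y'=s^{-\eta}y$. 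For the transition kernel, Proposition~\ref{local} gives $c(t-u)\mathbb{P}_z(X_{t-u}\in(xc(t),(x+\Delta)c(t)],\ T_0>t-u)\approx \Delta c(t) f_{1-s}(x-y)$ — here one needs that starting from $z=yc(t)$ with $y<x+\Delta$ the killing at $T_0$ is asymptotically negligible on the relevant scale, which follows by the same domain-of-attraction scaling that gives convergence of $X$ to $Y$; this is where the condition $z\le(x+\Delta)c(t)$, i.e. $y\le x+\Delta$, together with the limiting density $f_{1-s}$ being evaluated at $x-y\ge -\Delta$, enters, and in the limit $\Delta\downarrow0$ only $y\le x$ survives. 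Collecting the powers of $c(t)$ and $t$ and cancelling $\Delta$, and using $tc(t)\underline n(\zeta>t)/c(t)=t\underline n(\zeta>t)\to$ a nonzero constant is \emph{not} quite what appears — rather, after dividing by $\Delta c(t)\underline n(\zeta>t)^{-1}$... one tracks the exponents so that the constant prefactors match, which they must since the identity is homogeneous, to obtain
\[
g(x)=\int_{0}^{1}ds\int_{y=0}^{x}s^{-\eta-\overline\rho}g(s^{-\eta}y)f_{1-s}(x-y)\,dy.
\]

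The main obstacle I expect is making the interchange of limit and integral rigorous, i.e. producing a dominated-convergence argument for the $ds\,dy$ integral uniformly near the endpoints $s=0$, $s=1$ and $y=x$. Near $s=0$ one uses the bound from Lemma~\ref{A}/Corollary~\ref{M} ($tc(t)\kappa_t^\Delta(z)\le k\Delta U(z+\Delta)$) to control the small-time part of the $u$-integral, exactly as the term $\int_{\delta t}^{\infty}\kappa_u^\Delta du$ was controlled in the proof of Proposition~\ref{prop21}; near $s=1$ one uses the local-limit bound (\ref{llt2}) for the transition kernel, as in the estimate of $J_1^{\delta}$. The integrability of $s^{-\eta-\overline\rho}$ near $0$ is exactly $\eta+\overline\rho<1$, which is not automatic — but the singularity of $g(s^{-\eta}y)$, which decays like a stable-meander tail as its argument $\to\infty$, compensates, precisely mirroring how the factor $x^{-\alpha\overline\rho}\mathbb P^{\uparrow}(Y_1\in dx)$ has a finite total mass in Lemma~\ref{CD}. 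A cleaner route, which I would adopt if the direct estimate becomes unwieldy, is to prove (\ref{id}) first for the stable process $Y$ itself (where $c(t)=t^{\eta}$ exactly and the killed-semigroup identity (\ref{lastexit}) specializes), obtaining the identity with no error terms by exact scaling, and then note that since $g$ and $f_t$ in the statement are those of the limiting stable objects, (\ref{id}) is purely a statement about the stable meander and needs no limiting argument at all — one just applies the Markov property of the meander at time $s\in(0,1)$, decomposing $Z_1$ via its value $Z_s$ and the subsequent unconditioned increment, using the known fact that $Z_s\overset{d}{=}s^{\eta}Z_1'$ in law and that conditioning on $\{\zeta>s\}$ under $\underline n^{Y}$ reweights by the factor proportional to $(\cdot)^{-\alpha\overline\rho}$, i.e. $s^{-\overline\rho}$ after scaling.
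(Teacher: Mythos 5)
Your second, ``cleaner'' route is the one the paper actually takes, and it is the right instinct: since $g$ and $f_{t}$ are both stable objects, (\ref{id}) is a statement about $Y$ alone and no limiting argument from $X$ is needed. The paper's proof is exactly: write $g(z)dz=\underline{n}^{Y}(\epsilon _{1}\in dz|\zeta >1)$, use the scaling property of $\underline{n}^{Y}$ to obtain $s^{-\eta -\overline{\rho }}g(s^{-\eta }y)dy=\underline{n}^{Y}(\epsilon _{s}\in dy)/\underline{n}^{Y}(\zeta >1)$, and observe that after multiplying (\ref{id}) through by $\underline{n}^{Y}(\zeta >1)$ it becomes precisely the identity (\ref{DA}) for $Y$ at $t=1$ (the $a^{\ast }$ term vanishes because a strictly stable process is regular upwards).

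However, the justification you offer for the stable-case identity is a genuine gap. You claim one ``just applies the Markov property of the meander at time $s$, decomposing $Z_{1}$ via its value $Z_{s}$ and the subsequent unconditioned increment.'' Under the meander law, i.e.\ under $\underline{n}^{Y}(\cdot |\zeta >1)$, the post-$s$ increment is \emph{not} unconditioned: given $\epsilon _{s}=y$ the process evolves as $Y$ started at $y$, killed at $T_{0}$, and further conditioned on $T_{0}>1-s$, so the Markov property produces the killed, conditioned semigroup, not $f_{1-s}(x-y)$. The appearance of the \emph{free} density $f_{1-s}$, together with the integration over all $s\in (0,1)$ and the crucial factor $t$ (here $=1$) multiplying the left-hand side of (\ref{DA}), is exactly the content of the Alili--Chaumont identity, which is a nontrivial fluctuation identity and not a consequence of the Markov property. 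You also point to (\ref{lastexit}) where the identity you need is (\ref{DA}), item (iii) of Lemma \ref{lemma:1}. Since you already invoke (\ref{DA}) in your primary route, the repair is small: apply (\ref{DA}) to $Y$ at $t=1$ and combine it with the scaling computation above. Your primary route (a scaling limit of (\ref{DA}) for the general $X$) is unnecessary, much harder, and as written incomplete: the matching of the constant prefactors is left to ``homogeneity,'' which is not an argument, and the dominated-convergence issues near $s=0,1$ that you flag would require real work.
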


\begin{proof}
We recall from \cite{bertoinbook} VIII.4 that the one dimensional law of the
stable meander of length one can be written in terms of the excursion
measure, $\underline{n}^{Y},$ of the stable process $Y$ reflected in its
past infimum by the formula 
\begin{equation}
{g}(z)dz=\mathbb{P}(Z_{1}\in dz)=\underline{n}^{Y}(\epsilon _{1}\in dz|\zeta
>1),\qquad z\geq 0.  \label{stablemeander}
\end{equation}%
But the measure $\underline{n}^{Y}$ inherits the scaling property of the
stable process in the form: for any $c>0,$ and $s>0,$ 
\begin{equation}
\underline{n}^{Y}(\epsilon _{s}\in dy,s<\zeta )=c^{-\overline{\rho }}%
\underline{n}^{Y}(\epsilon _{s/c}\in c^{-\eta }dy,s<c\zeta ),\qquad y>0,
\label{ss}
\end{equation}%
see \cite{bertoinbook} Lemma VIII.14 or \cite{rivero2005a} for a proof of
this fact. Thus 
\begin{eqnarray}
s^{-\eta -\overline{\rho }}{g}(s^{-\eta }y)dy &=&s^{-\overline{\rho }}%
\underline{n}^{Y}(\epsilon _{1}\in s^{-\eta }dy|\zeta >1)  \notag \\
&=&s^{-\overline{\rho }}\underline{n}^{Y}(\epsilon _{s}\in dz|\zeta >s)=%
\frac{\underline{n}^{Y}(\epsilon _{s}\in dy)}{\underline{n}^{Y}(\zeta >1)},
\label{23}
\end{eqnarray}%
and multiplying (\ref{id}) by $\underline{n}^{Y}(\zeta >1)dx$ we see that it
reads%
\begin{equation*}
\underline{n}^{Y}(\epsilon _{1}\in dx)=\int_{0}^{1}ds\int_{y=0}^{x}%
\underline{n}^{Y}(\epsilon _{s}\in dy)f_{1-s}((x-y))dx,
\end{equation*}%
and this is equation (\ref{DA}) specialised to the stable case and $t=1.$
\end{proof}

We can now prove Proposition \ref{prop1}, which we restate;

\begin{proposition}
\label{F}For all values of $\alpha \overline{\rho }$, uniformly for $%
x_{t}\geq 0$ and uniformly in $\Delta ,$%
\begin{equation*}
\frac{c(t)\kappa _{t}^{\Delta }(x)}{\underline{n}(\zeta >t)}=\Delta
({g}(x_{t})+o(1))\text{ as }t\rightarrow \infty .
\end{equation*}
\end{proposition}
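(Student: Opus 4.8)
The plan is to start from the Alili--Chaumont identity \eqref{DA}, which after multiplying by $t$ reads
\[
t\kappa_{t}^{\Delta}(x)=\int_{0}^{t}du\int_{z=x}^{x+\Delta}\int_{y=0}^{z}\mathbb{P}(X_{t-u}\in dy)\,\underline{n}_{u}(dz-y)+a^{\ast}\mathbb{P}(X_{t}\in(x,x+\Delta]).
\]
As in the proof of Proposition~\ref{prop21} I would split this as $J_{1}^{\delta}+J_{2}^{\delta}+a^{\ast}\mathbb{P}(X_{t}\in(x,x+\Delta])$, but now the point is that the dominant contribution comes from the \emph{late} excursion-time range, i.e.\ from $J_{2}^{\delta}$ with $u$ close to $t$, since it is there that the excursion has run long enough to look like the stable meander. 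The strategy is: (1) show the first-term contribution $J_{1}^{\delta}$ and the creeping term are negligible compared with $\underline{n}(\zeta>t)/c(t)$; (2) in $J_{2}^{\delta}$, change variables $u=t s$, use Lemma~\ref{CD} (convergence of $\underline{n}(\epsilon_{ts}\in c(ts)\,dw\mid\zeta>ts)$ to the meander law $\mathbb{P}(Z_{1}\in dw)$) together with the local limit theorem \eqref{llt1} for $\mathbb{P}(X_{t(1-s)}\in dy)$ and the tail asymptotics $\underline{n}(\zeta>ts)\sim s^{-\overline\rho}\underline{n}(\zeta>t)$ from \eqref{1.1}; and (3) recognise the resulting limit integral as exactly the right-hand side of the meander identity \eqref{id} in Lemma~\ref{mea}, which equals $\Delta\,{g}(x_{t})$ up to the $o(1)$ error.

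In more detail, for $J_{2}^{\delta}$ write it (using the second form of the inner double integral from the proof of Lemma~\ref{A}) as
\[
J_{2}^{\delta}=\int_{\delta t}^{t}du\int_{y=0}^{x+\Delta}\mathbb{P}(X_{t-u}\in dy)\,\underline{n}_{u}\big([(x-y)^{+},x-y+\Delta)\big),
\]
substitute $u=ts$, and insert the factor $\underline{n}(\zeta>ts)/\underline{n}(\zeta>ts)$ so that $\underline{n}_{ts}$ becomes $\underline{n}(\zeta>ts)\,\underline{n}(\epsilon_{ts}\in\cdot\mid\zeta>ts)$. On the interval $y/c(t)\to0$ the conditioned law converges, after rescaling by $c(ts)$, to the meander density ${g}$ by Lemma~\ref{CD}; the factor $\underline{n}(\zeta>ts)$ contributes $s^{-\overline\rho}\underline{n}(\zeta>t)(1+o(1))$ by regular variation; $c(t)/c(ts)\to s^{-\eta}$; and $c(t(1-s))\mathbb{P}(X_{t(1-s)}\in dy)\to f_{1-s}(y_{t})\,dy$ by \eqref{llt1} after the appropriate rescaling. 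Collecting the powers of $s$ one gets, after dividing by $\underline{n}(\zeta>t)/c(t)$, the expression
\[
\Delta\int_{0}^{1}ds\int_{w=0}^{x_{t}}s^{-\eta-\overline\rho}\,{g}(s^{-\eta}w)\,f_{1-s}(x_{t}-w)\,dw\;(1+o(1)),
\]
which by Lemma~\ref{mea} is $\Delta({g}(x_{t})+o(1))$. The negligibility of $J_{1}^{\delta}$ is handled as in Lemma~\ref{A}/Proposition~\ref{prop21}: by \eqref{llt2} it is bounded by $k_{0}\Delta\,c(t)^{-1}\int_{0}^{\delta t}c(t-u)^{-1}\underline{n}_{u}(dw)$-type terms of order $\Delta U(x+\Delta)/((1-\delta)^{\eta}c(t))$, which divided by $\underline{n}(\zeta>t)/c(t)$ tends to $0$ provided $x_{t}\to0$ forces $U(x)\underline{n}(\zeta>t)\to0$ — and when $x_{t}$ stays bounded away from $0$ one uses Lemma~\ref{L3} ($U(c(t))\underline{n}(\zeta>t)$ bounded) plus subadditivity of $U$, handling the small-$\delta$ limit with the $\overset{t,\delta}{\backsim}$ device from Proposition~\ref{prop21}.

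Two points need care, and the second is the real obstacle. First, uniformity: the statement is uniform in $\Delta\in(0,\Delta_{0}]$ and in \emph{all} $x_{t}\ge0$, including $x_{t}$ large where ${g}(x_{t})\to0$ and one only needs the left side to be $o(\underline{n}(\zeta>t)/c(t))$; here Corollary~\ref{M} (the bound \eqref{3} uniform in $x\ge0$), the exponential-type decay of ${g}$ and the meander tails, and \eqref{llt1}'s uniformity in $x\in\mathbb{R}$ must be combined, and one should probably treat $x_{t}\in[0,D]$ and $x_{t}>D$ separately as in the Remark following Theorem~\ref{thm2}. Second — and this is the main difficulty — the interchange of limit and integral over $s\in(0,1)$: near $s=1$ the factor $c(t(1-s))^{-1}$ and the local limit theorem's $o(1)$ are not uniform (the time $t(1-s)$ does not go to infinity), and near $s=0$ the conditioned-law convergence in Lemma~\ref{CD} degrades; so one has to cut off $s\in(\delta,1-\delta)$, control the two end pieces by the a priori bounds (Corollary~\ref{M} on the excursion side near $s=1$, and the tail estimate $\int_{0}^{\delta}s^{-\overline\rho-\cdots}ds<\infty$ near $s=0$ together with a crude bound on the meander density near $0$), and only then send $\delta\downarrow0$. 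Making these cut-off estimates genuinely uniform in $x_{t}$ and $\Delta$, and checking that the dominated-convergence majorant over $s$ is integrable (which is where $\eta+\overline\rho<1$ versus $=1$ distinctions from the remark about $\alpha\overline\rho$ would matter if one were not careful), is the crux of the argument.
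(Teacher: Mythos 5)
Your plan follows the paper's proof of Proposition \ref{F} almost exactly: the same starting identity (\ref{DA})/(\ref{ac2}); the same main term, namely the range where both the excursion time $u=ts$ and the free-process time $t(1-s)$ are macroscopic, handled by Lemma \ref{CD}, the local limit theorem (\ref{llt1}) and the regular variation of $\underline{n}(\zeta>\cdot)$; the same limiting integral; and the same closing appeal to the meander identity (\ref{id}) of Lemma \ref{mea}. The paper's only structural difference is that it first peels off the boundary term $I_{2}=\int_{0}^{t}\int_{y=x}^{x+\Delta }\mathbb{P}(X_{u}\in dy)\underline{n}_{t-u}((0,x-y+\Delta ])\,du$ and then splits the remaining integral into three time ranges; that is bookkeeping.

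The one step that would fail as written is your treatment of $J_{1}^{\delta }$ (small excursion time). The bound $J_{1}^{\delta }\leq k_{0}\Delta \,U(x+\Delta )/c((1-\delta )t)$ is true, but after dividing by $t\underline{n}(\zeta >t)/c(t)$ it gives, via Lemma \ref{L3}, something of order $\Delta \,U(x+\Delta )/U(c(t))$: for $x_{t}$ bounded away from $0$ this is comparable to the main term and carries no $\delta $-dependence, so neither subadditivity of $U$ nor the $\overset{t,\delta }{\backsim }$ device can make it vanish, and for $x_{t}\rightarrow \infty $ it blows up. The paper instead bounds the spatial excursion integral by its total mass, $\int_{(0,x]}\underline{n}_{u}(dz)\leq \underline{n}(\zeta >u)$, so that Karamata's theorem gives $\int_{0}^{\delta t}\underline{n}(\zeta >u)\,du\sim \rho ^{-1}\delta ^{\rho }t\,\underline{n}(\zeta >t)$ and this piece is $O(\delta ^{\rho })$ relative to the target, uniformly in $x\geq 0$. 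This is precisely the mechanism you gesture at in your final paragraph (the integrability of $s^{-\overline{\rho }}$ near $0$), so the plan is repairable; but the $U(x+\Delta )$ bound, which is the right tool for Proposition \ref{prop21} where the answer is $f(0)U^{\Delta }(x)/tc(t)$, should not be carried over to this proposition, where $x$ may be of order $c(t)$ or larger.
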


\begin{proof}
This time we write
\begin{eqnarray*}
t\kappa _{t}^{\Delta }(x) &=&\int_{0}^{t}\int_{y=0}^{x+\Delta }\int_{z=x\vee
y}^{x+\Delta }\mathbb{P(}X_{u}\in dy)\underline{n}_{t-u}(dz-y)du+a^{\ast }%
\mathbb{P}(X_{t}\in (x,x+\Delta ]) \\
&=&\int_{0}^{t}\int_{y=0}^{x}\mathbb{P(}X_{u}\in dy)\kappa _{t-u}^{\Delta
}(x-y)du+a^{\ast }\mathbb{P}(X_{t}\in (x,x+\Delta ]) \\
 &&+\int_{0}^{t}\int_{y=x}^{x+\Delta }\mathbb{P(}X_{u}\in dy)\underline{n}%
_{t-u}((0,x-y+\Delta ])du \\
&:=&I_{1}+a^{\ast }\mathbb{P}(X_{t}\in (x,x+\Delta ])+I_{2}.
\end{eqnarray*}%
It is immediate from (\ref{llt2}) that $t^{-1}\Delta ^{-1}\mathbb{P}%
(X_{t}\in (x,x+\Delta ])=o(\underline{n}(\zeta >t)/c(t)).$ Also 
\begin{eqnarray*}
I_{2} &=&\int_{0}^{t}\int_{0}^{\Delta }\mathbb{P(}X_{u}\in x+dz)\underline{n}%
_{t-u}((0,\Delta -z])du \\
&\leq &\int_{0}^{t}\mathbb{P(}X_{u}\in (x,x+\Delta ])\kappa _{t-u}^{\Delta
}(0)du \\
&=&\int_{0}^{(1-\delta )t}+\int_{(1-\delta )t}^{t}\mathbb{P(}X_{u}\in
(x,x+\Delta ])\kappa _{t-u}^{\Delta }(0)du \\
&\leq &\frac{k_{4}(1-\delta )t\Delta U(\Delta )}{\delta tc(\delta t)}+\frac{%
k_{0}\Delta }{c((1-\delta )t)}\int_{0}^{\delta t}\underline{n}(\zeta >u)du \\
&\backsim &\frac{k_{4}(1-\delta )\Delta U(\Delta _{0})}{\delta ^{1+^{\eta
}}c(t)}+\frac{k_{0}\Delta \delta ^{1-\overline{\rho }}t\underline{n}(\zeta
>t)}{(1-\delta )^{\eta }c(t)},
\end{eqnarray*}%
so we see that $\lim_{t\rightarrow \infty }\frac{c(t)I_{2}}{t\Delta 
\underline{n}(\zeta >t)}\leq k_{0}\delta ^{\rho }(1-\delta )^{-\eta },$
uniformly in $x,$ and since $\delta $ is arbitrary, $\lim_{t\rightarrow
\infty }\frac{c(t)I_{2}}{t\Delta \underline{n}(\zeta >t)}=0.$ Next put $%
I_{1}=I_{1}^{1}+I_{1}^{2}+I_{1}^{3}$, where by the bound (\ref{4}), for
large enough $t$
\begin{eqnarray*}
I_{1}^{1} &:=&\int_{0}^{\delta t}\int_{y=0}^{x}\mathbb{P(}X_{u}\in dy)\kappa
_{t-u}^{\Delta }(x-y)du \\
&\leq &\frac{k_{5}\Delta \underline{n}(\zeta >(1-\delta )t)}{c((1-\delta )t)}%
\int_{0}^{\delta t}\mathbb{P(}0<X_{u}\leq x)du \\
&\leq &\frac{k_{5}\Delta \delta t\underline{n}(\zeta >(1-\delta )t)}{%
c((1-\delta )t)}\backsim \frac{k_{5}\delta \Delta \underline{n}(\zeta >t)t}{%
(1-\delta )^{\overline{\rho }+\eta }c(t)}.
\end{eqnarray*}%
Also
\begin{eqnarray*}
I_{1}^{3} &:=&\int_{0}^{\delta t}\int_{z{>}0}^{x}\underline{n}_{u}(dz)\mathbb{%
P(}X_{t-u}\in ((x-z)^{+},x-z+\Delta ])du \\
&\leq &\frac{k_{0}\Delta }{c((1-\delta )t)}\int_{0}^{\delta t}\underline{n}%
(\zeta >u)du\backsim \frac{k_{0}\Delta \delta ^{\rho }t\underline{n}(\zeta
>t)}{\rho (1-\delta )^{\eta }c(t)}.
\end{eqnarray*}%
So $\lim_{\delta \rightarrow 0}\lim \sup_{t\rightarrow \infty }\frac{%
c(t)(I_{1}^{1}+I_{1}^{3})}{t\Delta \underline{n}(\zeta >t)}=0.$ For the
other term, using Proposition \ref{local}, we have%
\begin{eqnarray*}
I_{1}^{2} &=&\int_{\delta t}^{(1-\delta )t}\int_{z{>}0}^{x}\underline{n}%
_{u}(dz)\mathbb{P(}X_{t-u}\in ((x-z)^{+},x-z+\Delta ])du \\
&=&\Delta \left( \int_{\delta t}^{(1-\delta )t}\int_{z{>}0}^{x}\underline{n}%
_{u}(dz)f((x-z)/c(t-u))/c(t-u)\right) \\
&&+o\left( \Delta \int_{\delta t}^{(1-\delta )t}\int_{z{>}0}^{x}\frac{%
\underline{n}_{u}(dz)}{c(t-u)}du\right) .
\end{eqnarray*}%
It is easily seen that, for any fixed $\delta >0,$ the error term is $%
o(t\Delta \underline{n}(\zeta >t)/c(t)),$ and in the remaining term we write 
$x=c(t)x_{t},$ $z=c(t)y$ and $u=st$ to see that $\frac{c(t)I_{2}}{\Delta t%
\underline{n}(\zeta >t)}$ can be written as 
\begin{eqnarray*}
&&\frac{1}{t\underline{n}(\zeta >t)}\int_{\delta t}^{(1-\delta )t}\frac{%
c(t)du}{c(t-u)}\int_{y=0}^{x_{t}}\underline{n}(\epsilon _{u}\in c(t)dy)f(%
\frac{c(t)(x_{t}-y)}{c(t-u)})+o(1) \\
&=&\frac{1}{\underline{n}(\zeta >t)}\int_{\delta }^{(1-\delta )}\frac{c(t)ds%
}{c(t(1-s))}\int_{y=0}^{x_{t}}\underline{n}(\epsilon _{ts}\in c(t)dy)f(\frac{%
c(t)(x_{t}-y)}{c(t(1-s)})+o(1) \\
&=&\int_{\delta }^{(1-\delta )}\frac{\underline{n}(\zeta >ts)c(t)ds}{%
\underline{n}(\zeta >t)c(t(1-s))}\int_{y=0}^{x_{t}}\underline{n}(\epsilon
_{ts}\in c(t)dy|\underline{n}(\zeta >ts))f(\frac{c(t)(x_{t}-y)}{c(t(1-s)}%
)+o(1).
\end{eqnarray*}%
It then follows from Lemma \ref{CD}, the regular variation of $\underline{n}%
(\zeta >t)$ and the fact that $f$ is uniformly continuous that this last
expression can be written as%
\begin{eqnarray*}
&&\int_{\delta }^{(1-\delta )}\frac{ds}{s^{\overline{\rho }}(1-s)^{\eta }}%
\int_{y=0}^{x_{t}}{\mathbb{P}}(Z_{1}\in s^{-\eta }dy)f((x_{t}-y)(1-s)^{-\eta })+o(1) \\
&=&\int_{\delta }^{(1-\delta )}ds\int_{y=0}^{x_{t}}s^{-\overline{\rho }%
}{\mathbb{P}}(Z_{1}\in s^{-\eta }dy)f_{1-s}((x_{t}-y))+o(1),
\end{eqnarray*}%
where we have used the scaling property. It is easy to check, from the known
behaviour of $f$ and that of the density of $Z_{s},$ see \cite{doneysavov},
that the corresponding integrals over $(0,\delta )$ and $(1-\delta ,1)$ are $%
o(1)$ as $\delta \rightarrow 0$ uniformly for $x\geq 0,$ so the result
follows from Lemma \ref{mea}.
\end{proof}

\begin{corollary}
\label{xx}Uniformly for $x,y\geq 0,$%
\begin{equation*}
\lim \sup_{t\rightarrow \infty }c(t)\underline{n}(\epsilon _{t}\in
(x,x+y]|\zeta >t)\leq \overline{g}y,
\end{equation*}%
where $\overline{g}=\sup_{x\geq 0}{g}(x)<\infty .$
\end{corollary}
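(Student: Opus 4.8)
The plan is to deduce everything directly from Proposition~\ref{F} (the restatement of Proposition~\ref{prop1}). Writing $\kappa_t^\Delta(x)=\underline{n}(\epsilon_t\in(x,x+\Delta])$ as in that proposition, we have $c(t)\,\underline{n}(\epsilon_t\in(x,x+\Delta]\mid\zeta>t)=c(t)\kappa_t^\Delta(x)/\underline{n}(\zeta>t)$, and Proposition~\ref{F} states that this equals $\Delta\bigl(g(x_t)+o(1)\bigr)$ as $t\to\infty$, uniformly in $x\ge0$ and in $\Delta\in(0,\Delta_0]$.

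First I would check that $\overline{g}=\sup_{u\ge0}g(u)<\infty$. By the bound~(\ref{4}) established in the proof of Lemma~\ref{A}, there is a finite $k_5$ with $c(t)\kappa_t^\Delta(x)\le k_5\Delta\,\underline{n}(\zeta>t)$ for all large $t$, uniformly in $x\ge0$ and $\Delta\in(0,\Delta_0]$. Fixing $u\ge0$ and applying this along the choice $x=u\,c(t)$, for which $x_t=u$, together with Proposition~\ref{F}, we get $\Delta\bigl(g(u)+o(1)\bigr)\le k_5\Delta$, hence $g(u)\le k_5$ on letting $t\to\infty$; since $u$ is arbitrary, $\overline{g}\le k_5<\infty$. (Boundedness of the stable meander density is of course classical, but this argument keeps the proof self-contained.)

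Now for $0<y\le\Delta_0$ the bound is immediate: Proposition~\ref{F} with $\Delta=y$ gives, uniformly in $x\ge0$,
\[
c(t)\,\underline{n}(\epsilon_t\in(x,x+y]\mid\zeta>t)=y\bigl(g(x_t)+o(1)\bigr)\le y\bigl(\overline{g}+o(1)\bigr),
\]
so that $\limsup_{t\to\infty}c(t)\,\underline{n}(\epsilon_t\in(x,x+y]\mid\zeta>t)\le\overline{g}\,y$, uniformly over this range of $(x,y)$. For a general $y>0$, fix an integer $N\ge y/\Delta_0$, partition $(x,x+y]$ into $N$ consecutive intervals of common length $y/N\le\Delta_0$ (each with left endpoint $\ge0$), apply the previous display to each summand with its common uniform error term, and sum the $N$ contributions; this yields $c(t)\,\underline{n}(\epsilon_t\in(x,x+y]\mid\zeta>t)\le y\bigl(\overline{g}+o(1)\bigr)$, and the corollary follows.

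There is no serious obstacle here: the statement is essentially a repackaging of Proposition~\ref{F} together with the elementary bound~(\ref{4}). The only points needing a little care are that the error term in Proposition~\ref{F} is genuinely uniform over all $x\ge0$ (which is part of its statement), and the observation that any prescribed value $u$ of the rescaled argument $x_t$ is realised by taking $x=u\,c(t)$, which is what converts the uniform upper bound on $c(t)\kappa_t^\Delta(x)/\underline{n}(\zeta>t)$ into the pointwise bound $g\le k_5$.
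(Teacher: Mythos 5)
Your proof is correct and follows essentially the same route as the paper: for $y\leq\Delta_{0}$ the bound is read off directly from Proposition \ref{F}, and for larger $y$ one partitions $(x,x+y]$ into finitely many subintervals of admissible length and sums the uniform estimates. The only addition is your self-contained derivation of $\overline{g}<\infty$ from the bound (\ref{4}) combined with Proposition \ref{F}, which the paper instead takes as a known property of the stable meander density; that extra argument is valid.
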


\begin{proof}
If $y\leq 1$ this is immediate from Proposition \ref{F}, and otherwise we
get the conclusion by writing $(x,x+y]$ as the union of disjoint intervals
of length less than or equal to $1.$
\end{proof}

\section{Proof of Theorem \protect\ref{thm1}}

\subsection{The discontinuous case.}

Here we assume $\Pi (\mathbb{R}^{-})>0,$ and deal separately with the cases $%
\alpha \overline{\rho }<1$ and $\alpha \overline{\rho }=1.$

\subsubsection{The case $\protect\alpha \overline{\protect\rho }<1$}

Write, for $y\geq 0,$%
\begin{eqnarray*}
\theta (t,y) &=&\underline{n}(\overline{\Pi }^{\ast }(y+\epsilon_{t}),\zeta
>t)\text{ and} \\
\chi (t,y) &=&\overline{n}(\overline{\Pi }^{\ast }(y-\epsilon _{t})%
\boldsymbol{1}_{\{y>\epsilon _{t}\}},\zeta >t)
\end{eqnarray*}%
so that $\theta (t,0)=h_{0}(t)$ is the density of $\underline{n}(\zeta \in
dt,\epsilon (\zeta-)>0).$ Note that, from e.g. the quintuple identity of \cite%
{dk} {or integrating (\ref{lastexit})}, we have that, for $x>0,$%
\begin{equation}
h_{x}(t)=\int_{0}^{t}\int_{0}^{x}\overline{n}_{s}(x-dy)\theta
(t-s,y)ds+a\theta (t,x)+a^{\ast }\chi (t,y).  \label{x}
\end{equation}%
So as well as proving the result (\ref{1.4}) for $h_{0},$ the following
Proposition will be useful for the case $x>0.$

\begin{proposition}
\label{Z}Assume $\alpha \overline{\rho }<1.$ Then uniformly for $y\geq 0,$%
\begin{equation}
\theta (t,y)\thicksim \overline{\rho }t^{-1}\underline{n}(\zeta >t)\phi
(y_{t})\text{ as }t\rightarrow \infty ,  \label{1}
\end{equation}%
where $y_{t}:=y/c(t)$ and%
\begin{equation}\label{2}
\begin{split}
\phi (z)&=\int_{0}^{\infty }(z+w)^{-\alpha }{g}(w)dw/\int_{0}^{\infty
}w^{-\alpha }{g}(w)dw\\
&=\mathbb{E}\{(z+Z_{1})^{-\alpha }\}/\mathbb{E}%
(Z_{1}^{-\alpha }).  
\end{split}
\end{equation}
\end{proposition}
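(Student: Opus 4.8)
The plan is to obtain a local estimate for $\theta(t,y)=\underline{n}(\overline{\Pi}^{\ast}(y+\epsilon_t),\zeta>t)$ by integrating the known asymptotics for $\underline{n}(\epsilon_t\in dz,\zeta>t)$ against the tail $\overline{\Pi}^{\ast}$. The starting point is to write
\begin{equation*}
\theta(t,y)=\int_{0}^{\infty}\underline{n}(\epsilon_t\in dz,\zeta>t)\,\overline{\Pi}^{\ast}(y+z).
\end{equation*}
Since $\alpha\overline{\rho}<1$, we are in the regime where $k^{\ast}>0$, so $\overline{\Pi}^{\ast}$ is genuinely regularly varying with index $-\alpha$ and $t\overline{\Pi}^{\ast}(c(t))\to k^{\ast}>0$; this is what makes the stable-scaling argument work. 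First I would split the integral into a ``bulk'' region $z\in(0,Kc(t)]$ and a tail region $z>Kc(t)$ for a large truncation parameter $K$, with a further inner cutoff near $z=0$ if $y_t$ is bounded away from $0$ (to keep $y+z$ away from $0$, where $\overline{\Pi}^{\ast}$ may blow up). In the bulk region I substitute $z=c(t)w$ and use Proposition \ref{F} (i.e. $c(t)\underline{n}(\epsilon_t\in c(t)dw\mid\zeta>t)\to g(w)\,dw$, uniformly in $\Delta$) together with the uniform convergence $\overline{\Pi}^{\ast}(c(t)v)/\overline{\Pi}^{\ast}(c(t))\to v^{-\alpha}$ (uniform on compacts of $(0,\infty)$ by the uniform convergence theorem for regular variation) to get
\begin{equation*}
\theta(t,y)\approx \underline{n}(\zeta>t)\,\overline{\Pi}^{\ast}(c(t))\int_{0}^{K}(y_t+w)^{-\alpha}g(w)\,dw.
\end{equation*}
Then I use $t\overline{\Pi}^{\ast}(c(t))\to k^{\ast}$ and identify $k^{\ast}$ with $\overline{\rho}/\mathbb{E}(Z_1^{-\alpha})$ — or more precisely arrange the constants so that the $K\to\infty$ limit of the right-hand side is exactly $\overline{\rho}t^{-1}\underline{n}(\zeta>t)\phi(y_t)$, using the definition \eqref{2} of $\phi$.

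The tail region $z>Kc(t)$ is controlled by the uniform bound from Corollary \ref{xx}: $c(t)\underline{n}(\epsilon_t\in(z,z+1]\mid\zeta>t)\le \overline{g}$ for large $t$, uniformly in $z$, so summing over unit intervals gives $\underline{n}(\epsilon_t\in dz,\zeta>t)\le (\overline{g}/c(t))\underline{n}(\zeta>t)\,dz$ in a suitable integrated sense; combined with $\int_{Kc(t)}^{\infty}\overline{\Pi}^{\ast}(z)\,dz$, which after scaling is $c(t)\overline{\Pi}^{\ast}(c(t))\int_K^\infty v^{-\alpha}\,dv\cdot(1+o(1))$ (finite since $\alpha>1$ here, as $\alpha\overline{\rho}<1$ forces $\alpha>1$), this contributes at most $C K^{1-\alpha}\,t^{-1}\underline{n}(\zeta>t)$, which is negligible as $K\to\infty$. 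A symmetric bookkeeping of the inner cutoff near $0$: when $y_t$ stays bounded away from $0$ the integrand $(y_t+w)^{-\alpha}g(w)$ is bounded, so no issue; when $y_t\to0$ the potential singularity at $w=0$ is integrable because $g(w)\sim c\,w^{\alpha\rho}$ near $0$ (the meander density vanishes like a power strictly larger than $\alpha-1$ when $\alpha\overline{\rho}<1$), so $\int_0^{\cdot}w^{-\alpha}g(w)\,dw<\infty$ and dominated convergence applies; I would note this is exactly why $\mathbb{E}(Z_1^{-\alpha})<\infty$ and the formula \eqref{2} makes sense. The two representations of $\phi$ in \eqref{2} are then just \eqref{stablemeander} rewritten.

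The main obstacle is the uniformity in $y\ge 0$: I need the estimate to hold simultaneously for all $y$, including $y_t\to0$ and $y_t\to\infty$. The case $y_t\to\infty$ is actually easy, since both sides tend to $0$ and one only needs matching rates — here $\overline{\Pi}^{\ast}(y+\epsilon_t)\le\overline{\Pi}^{\ast}(y)$ gives $\theta(t,y)\le\overline{\Pi}^{\ast}(y)\underline{n}(\zeta>t)$, and $\phi(y_t)\sim y_t^{-\alpha}$, and one checks these agree up to the correct constant via regular variation of $\overline{\Pi}^{\ast}$ at $\infty$. The delicate range is $y_t$ in a fixed compact subset of $(0,\infty)$ versus $y_t\to0$: there the uniform convergence theorem for $\overline{\Pi}^{\ast}$ must be applied carefully on $[y_t,y_t+K]$, and one wants a bound like $\overline{\Pi}^{\ast}(c(t)v)/\overline{\Pi}^{\ast}(c(t))\le C\max(v^{-\alpha-\varepsilon},v^{-\alpha+\varepsilon})$ (Potter bounds) to justify dominated convergence across all regimes of $y_t$ at once. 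Packaging the bulk estimate, the tail truncation, and the near-zero truncation into a single $\limsup/\liminf$ sandwich that is uniform in $y$ — in the same $\overset{t,\delta}{\backsim}$ style already used in the proof of Proposition \ref{prop21}, but now with truncation parameter $K$ rather than $\delta$ — is the real content of the argument; once that framework is set up the computation is routine.
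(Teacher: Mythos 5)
Your bulk and tail treatment matches the paper's handling of the region $z>\delta c(t)$: the paper likewise rescales by $c(t)$, uses the convergence of $\underline{n}(\epsilon_{t}\in c(t)dy\mid\zeta>t)$ to the meander law together with $t\overline{\Pi}^{\ast}(c(t)(y+b))\to k^{\ast}(y+b)^{-\alpha}$, and settles the constant via the identity $\overline{\rho}=k^{\ast}\mathbb{E}(Z_{1}^{-\alpha})$ (obtained by applying the statement to a stable process itself); uniformity in $y$ is then deduced simply from the monotonicity of $\theta(t,\cdot)$ and of $\phi$, which is cleaner than a Potter-bound sandwich. The genuine gap is in your treatment of the region near $z=0$. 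You argue that the singularity at $w=0$ is harmless because the limit integrand $w^{-\alpha}g(w)$ is integrable and dominated convergence applies via Potter bounds. But Potter bounds control $\overline{\Pi}^{\ast}(c(t)w)/\overline{\Pi}^{\ast}(c(t))$ only when the argument $c(t)w$ stays above a fixed threshold $x_{0}$; for $w\le x_{0}/c(t)$, i.e. $z=c(t)w$ in a fixed bounded neighbourhood of $0$, the regular variation of $\overline{\Pi}^{\ast}$ at infinity says nothing, and $\overline{\Pi}^{\ast}(z)$ can be as singular as nearly $z^{-2}$ there regardless of $\alpha$. So you have no dominating function on $(0,\delta c(t)]$, and the contribution $\int_{0}^{\delta c(t)}\overline{\Pi}^{\ast}(z)\underline{n}_{t}(dz)$ is not controlled by your argument. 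This is exactly where the paper has to work: it uses Vigon's identity to prove that $\overline{\Pi}^{\ast}(z)U(z)$ is integrable at $0$ (Lemma \ref{B}), combines this with the entrance-law estimate $tc(t)\underline{n}_{t}(dz)\approx f(0)U(z)dz$ for $z=o(c(t))$ (Corollary \ref{Q}), and then exploits that $\overline{\Pi}^{\ast}(x)U(x)\in RV(-\alpha\overline{\rho})$ with $-\alpha\overline{\rho}>-1$ to conclude that this contribution is $O(\delta^{1-\alpha\overline{\rho}})\,t^{-1}\underline{n}(\zeta>t)$, hence negligible as $\delta\downarrow0$. Without this input your proof does not close.

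A secondary error: $\alpha\overline{\rho}<1$ does not force $\alpha>1$ (every $\alpha<1$ satisfies $\alpha\overline{\rho}<1$), so your tail estimate via $\int_{K}^{\infty}v^{-\alpha}dv$ can diverge. This step is easily repaired --- on $z>Kc(t)$ simply bound $\overline{\Pi}^{\ast}(y+z)\le\overline{\Pi}^{\ast}(Kc(t))\sim k^{\ast}K^{-\alpha}/t$ and multiply by the total mass $\underline{n}(\zeta>t)$ --- but as written it is wrong.
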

Our argument to prove Proposition \ref{Z} relies on the decomposition, for $\delta>0$ 
\begin{eqnarray*}
\theta (t,y) &=&\int_{x>0}\underline{n}_{t}(dx)\overline{\Pi }^{\ast }(x+y)
\\
&=&\int_{\delta c(t)\geq x>0}\underline{n}_{t}(dx)\overline{\Pi }^{\ast
}(x+y)+\int_{x>\delta c(t)}\underline{n}_{t}(dx)\overline{\Pi }^{\ast }(x+y)
\\
&:&=I_{1}(\delta ,y)+I_{2}(\delta ,y).
\end{eqnarray*}%
To deal with the first of these we need the following result.

\begin{lemma}
\label{B}For any L\'{e}vy process, $xU(x)\Pi ^{\ast }(dx)$ is integrable at
zero.
\end{lemma}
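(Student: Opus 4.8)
The statement to prove is Lemma \ref{B}: for any Lévy process, $xU(x)\Pi^{\ast}(dx)$ is integrable at zero, i.e. $\int_{0+} x\,U(x)\,\Pi^{\ast}(dx)<\infty$. Here $U$ is the renewal function of the upgoing ladder height process $H$ and $\Pi^{\ast}$ is the Lévy measure of $-X$, so that $\overline{\Pi}^{\ast}(y)=\Pi^{\ast}((y,\infty))=\Pi((-\infty,-y))$. The natural route is to integrate by parts and then invoke a known integrability criterion linking $U$ and the jump measure of $X$. Recall that the downgoing ladder height process $H^{\ast}$ has Lévy measure $\mu^{\ast}$ whose tail $\overline{\mu}^{\ast}$ is, up to constants, controlled by $\overline{\Pi}^{\ast}$; more precisely, the classical Vigon "équations amicales" (or the Silverstein/Rogozin description of ladder measures) give $\overline{\mu}^{\ast}(y)=\int_{0}^{\infty}U(dz)\,\overline{\Pi}^{\ast}(y+z)$, and in particular $\overline{\mu}^{\ast}(y)\ge \overline{\Pi}^{\ast}(y)\,U(\{0\})$ is not quite what we want — instead we want an \emph{upper} bound. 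The key classical fact is that $\int_{0}^{1}U(y)\,\overline{\Pi}^{\ast}(y)\,dy<\infty$; this is exactly the statement that $H^{\ast}$ is a genuine (finite-lifetime or proper) subordinator, equivalently that $\int_{0}^{1}\overline{\mu}^{\ast}(y)\,dy<\infty$, combined with Vigon's identity. I would take this as the engine of the proof.

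First I would write, for small $\epsilon>0$,
\begin{equation*}
\int_{(0,\epsilon]} x\,U(x)\,\Pi^{\ast}(dx)=\int_{(0,\epsilon]} x\,U(x)\,d(-\overline{\Pi}^{\ast})(x),
\end{equation*}
and integrate by parts. The boundary terms are $\epsilon\,U(\epsilon)\,\overline{\Pi}^{\ast}(\epsilon)$ (finite) and a vanishing term at $0$ once one checks $x\,U(x)\,\overline{\Pi}^{\ast}(x)\to 0$ as $x\downarrow 0$ — which itself follows since $U(x)\to U(\{0\})<\infty$ as $x\downarrow 0$ and $x\overline{\Pi}^{\ast}(x)\to 0$ is automatic when $\Pi$ is infinite (and when $\Pi^\ast$ is finite near $0$ the claim is trivial). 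This leaves
\begin{equation*}
\int_{(0,\epsilon]} x\,U(x)\,\Pi^{\ast}(dx)=\bigl[\text{boundary terms}\bigr]+\int_{0}^{\epsilon}\overline{\Pi}^{\ast}(x)\,d\bigl(xU(x)\bigr).
\end{equation*}
Now $d(xU(x))=U(x)\,dx+x\,U(dx)$, so the integral splits into $\int_{0}^{\epsilon}U(x)\overline{\Pi}^{\ast}(x)\,dx$ plus $\int_{0}^{\epsilon} x\,\overline{\Pi}^{\ast}(x)\,U(dx)$. The first piece is finite by the criterion quoted above (properness of $H^{\ast}$ plus Vigon). For the second piece I would use $x\le \epsilon$ to bound it by $\epsilon\int_{0}^{\epsilon}\overline{\Pi}^{\ast}(x)\,U(dx)$ and then recognise $\int_{0}^{\epsilon}\overline{\Pi}^{\ast}(x)\,U(dx)$ as (a piece of) $\overline{\mu}^{\ast}(0+)$-type quantity via Vigon's amical equation $\overline{\mu}^{\ast}(y)=\int_{[0,\infty)}\overline{\Pi}^{\ast}(y+z)\,U(dz)$ evaluated near $y=0$, which is finite because $\overline{\mu}^{\ast}$ is the tail of a Lévy measure and hence finite away from $0$; more carefully, $\int_{0}^{\epsilon}\overline{\Pi}^{\ast}(x)\,U(dx)\le \overline{\mu}^{\ast}(0)$ when the latter is finite, and when $\overline{\Pi}^{\ast}(0+)=\Pi((-\infty,0))$ is infinite one instead argues directly that $\int_{0}^{\epsilon}\overline{\Pi}^{\ast}(x)\,U(dx)<\infty$ because $\int_{0}^{1}\overline{\mu}^{\ast}(y)\,dy<\infty$ forces, after a Fubini interchange, $\int_{0}^{\epsilon}\overline{\Pi}^{\ast}(x)\,U(dx)<\infty$.

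The main obstacle, and the step deserving the most care, is the interchange-of-integration argument that turns the finiteness of $\int_{0}^{1}\overline{\mu}^{\ast}(y)\,dy$ (i.e. the downgoing ladder subordinator having no pathology near $0$, which is automatic) into the finiteness of the two integrals $\int_{0}^{\epsilon}U(x)\overline{\Pi}^{\ast}(x)\,dx$ and $\int_{0}^{\epsilon} x\,\overline{\Pi}^{\ast}(x)\,U(dx)$. Concretely, from $\overline{\mu}^{\ast}(y)=\int_{[0,\infty)}\overline{\Pi}^{\ast}(y+z)\,U(dz)$ one gets $\int_{0}^{1}\overline{\mu}^{\ast}(y)\,dy=\int_{[0,\infty)}U(dz)\int_{0}^{1}\overline{\Pi}^{\ast}(y+z)\,dy=\int_{[0,\infty)}U(dz)\int_{z}^{z+1}\overline{\Pi}^{\ast}(w)\,dw$; restricting $z$ to $[0,\epsilon]$ and bounding $\int_{z}^{z+1}\overline{\Pi}^{\ast}(w)\,dw\ge \int_{z}^{1}\overline{\Pi}^{\ast}(w)\,dw$ and using subadditivity/monotonicity of $U$ and $\overline{\Pi}^{\ast}$ extracts exactly the finiteness we need, up to elementary manipulations. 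Alternatively — and this is probably the cleanest exposition — one simply cites the equivalence (see e.g. Bertoin's book, or the references \cite{doneybook}, \cite{kypbook} already invoked) that $\int_{0}^{1}U(y)\,\Pi^{\ast}(dy)<\infty$ always holds for the upgoing renewal function paired with the \emph{negative} jumps, with the extra factor $x$ only improving convergence; I would phrase the final write-up around that citation and relegate the integration-by-parts bookkeeping to one displayed line.
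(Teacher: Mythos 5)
You have the right engine --- Vigon's identity $\overline{\mu }^{\ast }(y)=\int_{[0,\infty )}U(dz)\overline{\Pi }^{\ast }(y+z)$ together with $\int_{0}^{1}\overline{\mu }^{\ast }(y)dy<\infty $ --- and your treatment of the first piece, $\int_{0}^{\epsilon }U(x)\overline{\Pi }^{\ast }(x)dx<\infty ,$ is correct and is essentially the paper's computation. But the integration by parts is a detour that manufactures two terms you cannot control as claimed. The bound $\int_{0}^{\epsilon }x\overline{\Pi }^{\ast }(x)U(dx)\leq \epsilon \int_{0}^{\epsilon }\overline{\Pi }^{\ast }(x)U(dx)$ is useless: the right-hand integral is a piece of $\overline{\mu }^{\ast }(0+),$ the total mass of the L\'{e}vy measure of $H^{\ast },$ which is typically infinite. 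Concretely, for a symmetric $\alpha $-stable process $U(dx)\asymp x^{\alpha /2-1}dx$ and $\overline{\Pi }^{\ast }(x)\asymp x^{-\alpha },$ so $\int_{0}^{\epsilon }\overline{\Pi }^{\ast }(x)U(dx)\asymp \int_{0}^{\epsilon }x^{-1-\alpha /2}dx=\infty ;$ and no Fubini manipulation of $\int_{0}^{1}\overline{\mu }^{\ast }<\infty $ will rescue this, since Fubini only yields control of $\int_{0}^{\epsilon }z\overline{\Pi }^{\ast }(2z)U(dz)$ and there is no doubling bound for $\overline{\Pi }^{\ast }.$ The boundary term is also not disposed of: $x\overline{\Pi }^{\ast }(x)\rightarrow 0$ is \emph{not} automatic (only $x^{2}\overline{\Pi }^{\ast }(x)\rightarrow 0$ follows from $\int_{0}^{1}x^{2}\Pi ^{\ast }(dx)<\infty ;$ for the Cauchy process $x\overline{\Pi }^{\ast }(x)$ tends to a positive constant and for $1<\alpha <2$ it blows up), and proving that $xU(x)\overline{\Pi }^{\ast }(x)\rightarrow 0$ requires the Vigon machinery all over again. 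Finally, the "clean citation" you propose, $\int_{0}^{1}U(y)\Pi ^{\ast }(dy)<\infty $ with "the factor $x$ only improving convergence", is a false statement --- it fails for every symmetric stable process --- and conflates the measure $\Pi ^{\ast }(dy)$ with the tail $\overline{\Pi }^{\ast }(y)dy;$ the factor $x$ in the lemma is not a luxury but exactly what makes it true.

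The repair is to drop the integration by parts entirely: the lemma follows from your first piece alone. By subadditivity of the renewal function, $U(z)\leq 2U(z/2),$ hence $zU(z)\leq 4\int_{z/2}^{z}U(y)dy\leq 4\int_{0}^{z}U(y)dy,$ and therefore
\begin{equation*}
\int_{0}^{1}zU(z)\Pi ^{\ast }(dz)\leq 4\int_{0}^{1}\Pi ^{\ast
}(dz)\int_{0}^{z}U(y)dy=4\int_{0}^{1}U(y)\left( \overline{\Pi }^{\ast }(y)-
\overline{\Pi }^{\ast }(1)\right) dy\leq 4\int_{0}^{1}U(y)\overline{\Pi }
^{\ast }(y)dy<\infty .
\end{equation*}
This is precisely the paper's argument, run with the other form of Vigon's identity, $\overline{\mu }^{\ast }(x)=\int_{x}^{\infty }\Pi ^{\ast }(dz)U(z-x),$ which gives $\int_{0}^{1}\Pi ^{\ast }(dz)\int_{0}^{z}U(y)dy\leq \int_{0}^{1}\overline{\mu }^{\ast }(x)dx<\infty $ directly.
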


\begin{proof}
By Vigon's identity, the tail of the L\'{e}vy measure of the down going
ladder height process is given by 
\begin{eqnarray}
\overline{\mu }^{\ast }(x) &=&\int_{0}^{\infty }U(dy)\overline{\Pi }^{\ast
}(x+y)  \label{v} \\
&=&\int_{0}^{\infty }U(dy)\int_{x+y}^{\infty }\Pi ^{\ast }(dz)  \notag \\
&=&\int_{z=x}^{\infty }\Pi ^{\ast }(dz)\int_{y<x-z}^{\infty
}U(dy)=\int_{z=x}^{\infty }\Pi ^{\ast }(dz)U(z-x)  \notag
\end{eqnarray}%
So%
\begin{eqnarray*}
C &=&\int_{0}^{1}\overline{\mu }^{\ast }(x)dx\geq
\int_{x=0}^{1}\int_{z=x}^{1}\Pi ^{\ast }(dz)U(z-x)dx \\
&=&\int_{z=0}^{1}\int_{x=0}^{z}\Pi ^{\ast }(dz)U(z-x)dx \\
&=&\int_{z=0}^{1}\Pi ^{\ast }(dz)\int_{y=0}^{z}U(y)dy \\
&\geq &\int_{z=0}^{1}\Pi ^{\ast }(dz)\int_{y=z/2}^{z}U(y)dy \\
&\geq &\frac{1}{2}\int_{z=0}^{1}zU(z/2)\Pi ^{\ast }(dz).
\end{eqnarray*}%
But by Erickson's \cite{erickson} bounds, $U(z/2)\geq CU(z),$ and the result follows.
\end{proof}

Now we show that uniformly in $y\geq 0$%
\begin{equation*}
\lim_{\delta \downarrow 0}\lim \sup_{t\rightarrow \infty }\frac{%
tI_{1}(\delta ,y)}{\underline{n}(\zeta >t)}=0.
\end{equation*}

First we note that for all $y\geq 0$, we have $I_{1}(\delta ,y)\leq
I_{1}(\delta ,0).$ Then from Lemma \ref{Q}, we can choose $\delta $ small
enough and $t_{0}$ large enough such that $$tc(t)\underline{n}_{t}((x,\delta
c(t))\leq 2f(0)\int^{\delta c(t)}_{x}U(y)dy,\qquad \text{for all}\ 0\leq x\leq \delta c\left( t\right).$$ And then 
\begin{eqnarray*}
&&\int_{0}^{\delta c(t)}\overline{\Pi }^{\ast }(x)\underline{n}_{t}(dx) \\
&=&\overline{\Pi }^{\ast }(\delta c(t))\underline{n}_{t}((0,\delta
c(t))+\int_{0}^{\delta c(t)}\underline{n}_{t}((x,\delta c(t))\Pi ^{\ast }(dx)
\\
&\leq &\frac{2f(0)}{tc(t)}\left( \overline{\Pi }^{\ast }(\delta
c(t))\int_{0}^{\delta c(t)}U(y)dy+\int_{0}^{\delta c(t)}\Pi ^{\ast
}(dx)\int_{x}^{\delta c(t)}U(y)dy\right) \\
&=&\frac{2f(0)}{tc(t)}\int_{0}^{\delta c(t)}\overline{\Pi }^{\ast
}(x)U(x)dx\backsim \frac{C\delta c(t)\overline{\Pi }^{\ast }(\delta
c(t))U(\delta c(t))}{tc(t)}{\color{red},}
\end{eqnarray*}%
where we use the fact that $\overline{\Pi }^{\ast }(t)U(t)$ is rv with index 
$-\alpha +\alpha \rho =-\alpha \overline{\rho }>-1.$ For the same reason, and using Lemma \ref{L3} we
can replace the numerator by 
\begin{eqnarray*}
C\delta ^{1-\alpha +\alpha \rho }c(t)\overline{\Pi }^{\ast }(c(t))U(c(t))
&\backsim &C\delta ^{1-\alpha +\alpha \rho }c(t)t^{-1}t\underline{n}(\zeta
>t) \\
&=&C\delta ^{1-\alpha +\alpha \rho }c(t)\underline{n}(\zeta >t),
\end{eqnarray*}%
and the conclusion follows.

Next we show that for any fixed $b\geq 0$%
\begin{equation}
\lim_{\delta \downarrow 0}\lim_{t\rightarrow \infty }\frac{tI_{2}(\delta
,bc(t))}{\underline{n}(\zeta >t)}=\overline{\rho }\phi (b).  \label{21}
\end{equation}%
For this, we use Lemma \ref{CD} and write%
\begin{eqnarray*}
\frac{tI_{2}(\delta ,bc(t))}{\underline{n}(\zeta >t)} &=&t\int_{x>\delta
c(t)}\underline{n}(\epsilon_{t}\in dx|\zeta >t)\overline{\Pi }^{\ast
}(x+bc(t)) \\
&=&t\int_{y>\delta }\underline{n}(\epsilon_{t}\in c(t)dy|\zeta >t)%
\overline{\Pi }^{\ast }(c(t)(y+b)) \\
&\rightarrow &k^{\ast }\int_{y>\delta }\mathbb{P}(Z_{1}\in dy)(y+b)^{-\alpha
}dy.
\end{eqnarray*}%
By letting $\delta \rightarrow 0$ we see that (\ref{21}) holds, except that $%
\overline{\rho }$ is replaced by $k^{\ast }\mathbb{E}Z_{1}^{-\alpha }.$
Taking $b=0$ this shows that $h_{0}(t)\backsim k^{\ast }\mathbb{E}%
Z_{1}^{-\alpha }t^{-1}\underline{n}(\zeta >t),$ and, as we show later, $%
\underline{n}^{d}(\zeta >t)=\int_{t}^{\infty }h_{0}(s)ds\backsim \underline{n%
}(\zeta >t).$ By applying this result to the case where $X$ is an $\alpha$-stable process with positivity parameter $\rho$ we get that 
\begin{equation}
\overline{\rho }=k^{\ast }\mathbb{E}Z_{1}^{-\alpha }.  \label{22}
\end{equation}%
We have shown that (\ref{1}) holds for $y=bc(t)$. The general result
then follows from the fact that $\theta (t,y)$ is monotone in $y.$

\section{The case $\protect\alpha \overline{\protect\rho }=1.$}

In this case the ladder height process $H^{\ast }$ is relatively stable,
i.e. there is a norming function $b$ such that $H_{t}^{\ast }/b(t)\overset{P}%
{\rightarrow }1,$ and this can happen in two different ways. Put $A^{\ast
}(x)=\int_{0}^{x}\overline{\mu }^{\ast }(y)dy;$ then either $\mathbb{E}%
H_{1}^{\ast }=d^{\ast }+A^{\ast }(\infty )<\infty ,$ or $A^{\ast }(\infty
)=\infty ,$ and in the latter case $A^{\ast }\in RV(0).$ It is immediate
from Vigon's identity that if we put $B(x)=\int_{0}^{x}U(y)\overline{\Pi }%
^{\ast }(y)dy,$ then $A^{\ast }(\infty )<\infty $ iff $B(\infty )<\infty .$ In
our case the connection between these functions is closer than this, because:

\begin{lemma}
\label{L}If $\alpha \overline{\rho }=1$ and $\mathbb{E}H_{1}^{\ast }=\infty $
then $B(x)\backsim A^{\ast }(x)$ as $x\rightarrow \infty .$
\end{lemma}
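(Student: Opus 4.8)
The plan is to compare $B(x) = \int_0^x U(y)\overline{\Pi}^\ast(y)\,dy$ with $A^\ast(x) = \int_0^x \overline{\mu}^\ast(y)\,dy$ using Vigon's identity $\overline{\mu}^\ast(x) = \int_0^\infty U(dy)\overline{\Pi}^\ast(x+y)$, which was recalled in (\ref{v}). First I would rewrite $A^\ast(x)$ by interchanging the order of integration: $A^\ast(x) = \int_0^x dz\int_0^\infty U(dy)\overline{\Pi}^\ast(z+y) = \int_0^\infty U(dy)\int_y^{x+y}\overline{\Pi}^\ast(w)\,dw$, where the inner substitution $w = z+y$ is used. Comparing this with $B(x) = \int_0^x U(y)\overline{\Pi}^\ast(y)\,dy = \int_0^x \overline{\Pi}^\ast(w)U(w)\,dw$ (after an integration by parts on $U(dy)$, being careful with the drift term $a^\ast\delta_0$, or more cleanly writing $U(w) = \int_0^w U(dy) + a^\ast = \int_{[0,w)}U(dy)$ up to the atom), one sees that both quantities are, up to harmless boundary corrections, of the form $\int\!\int_{\{0\le y,\; y\le w\}} \mathbf{1}_{\{\text{region}\}}\,U(dy)\,\overline{\Pi}^\ast(w)\,dw$. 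The difference $A^\ast(x) - B(x)$ then collapses to a contribution from the region where $w > x$, namely something like $\int_0^\infty U(dy)\int_x^{x+y}\overline{\Pi}^\ast(w)\,dw$, plus a bounded term.

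The key step is then to show this remainder is $o(A^\ast(x))$ as $x\to\infty$. Here I would exploit the hypothesis $\alpha\overline{\rho}=1$: this forces $\overline{\Pi}^\ast(t)U(t) \in RV(-\alpha\overline{\rho}) = RV(-1)$, which we already noted in the excerpt (the product has index $-\alpha + \alpha\rho = -\alpha\overline{\rho}$), so in the current case it is $RV(-1)$ and hence $B(x) = \int_0^x \overline{\Pi}^\ast(w)U(w)\,dw \in RV(0)$ and $B(\infty) = \infty$ (this is exactly the assumption $\mathbb{E}H_1^\ast = \infty$, equivalently $A^\ast(\infty) = \infty$, via the remark preceding the lemma). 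In the remainder term, for fixed $w$ the inner integral $\int_x^{x+y}\overline{\Pi}^\ast \le y\,\overline{\Pi}^\ast(x)$ when $y$ is bounded, while the large-$y$ part is controlled because $\int_0^\infty U(dy)\overline{\Pi}^\ast(x+y) = \overline{\mu}^\ast(x) = o(1)\cdot$ (something), and more precisely $\overline{\mu}^\ast \in RV(-1)$ as well since $A^\ast \in RV(0)$. Splitting the $y$-integral at $\epsilon x$ and using regular variation of $U$ and monotonicity of $\overline{\Pi}^\ast$ should give remainder $= O(\overline{\mu}^\ast(x)\cdot x) = o(A^\ast(x))$ because $A^\ast(x)/(x\overline{\mu}^\ast(x)) \to \infty$ (slowly varying divided by regularly-varying-of-index-zero-times... — more carefully, $A^\ast(x) = \int_0^x \overline{\mu}^\ast$, and since $\overline{\mu}^\ast \in RV(-1)$, Karamata gives $x\overline{\mu}^\ast(x) = o(A^\ast(x))$).

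The main obstacle I anticipate is handling the tail of the $y$-integration in the remainder term carefully: $U(dy)$ is only known to have $U(y) \in RV(\alpha\overline{\rho}) = RV(1)$, so $U(dy)$ integrated against the slowly decaying $\overline{\Pi}^\ast$ needs a genuine truncation argument rather than a crude bound, and one must verify uniformity of the regular-variation estimates. A secondary nuisance is bookkeeping the drift atom $a^\ast$ in $U$ and the behaviour near $0$, but since $xU(x)\Pi^\ast(dx)$ is integrable at zero by Lemma \ref{B}, all near-zero contributions are bounded and hence negligible against $A^\ast(x)\to\infty$. Once the remainder is shown to be $o(A^\ast(x))$, and symmetrically $o(B(x))$ — which follows since $A^\ast(x)\backsim B(x) + o(A^\ast(x))$ already forces $A^\ast \backsim B$ — the lemma follows.
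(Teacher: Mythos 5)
Your decomposition is exactly the paper's: integrating Vigon's identity gives $A^{\ast }(x)=B(x)+E(x)$ with $E(x)=\int_{x}^{\infty }\overline{\Pi }^{\ast }(w)\,dw\int_{w-x}^{w}U(dz)$, and the whole lemma reduces to $E(x)=o(A^{\ast }(x))$. The part of $E$ coming from $y\leq x$ (bounded by $xU(x)\overline{\Pi }^{\ast }(x)\leq Cx\overline{\mu }^{\ast }(x/2)=o(A^{\ast }(x))$, using $\overline{\mu }^{\ast }(x)\geq U(x)\overline{\Pi }^{\ast }(2x)$ and Erickson's bounds) you handle as the paper does. The gap is precisely the tail piece $x\int_{x}^{\infty }U(dy)\overline{\Pi }^{\ast }(y)$, which you yourself flag as the main obstacle and then dispatch with ``regular variation of $U$ and monotonicity of $\overline{\Pi }^{\ast }$''. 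These tools do not suffice, and the reason is a point your sketch gets wrong twice: when $\alpha \overline{\rho }=1$ one has $t\overline{\Pi }^{\ast }(c(t))\rightarrow k^{\ast }=0$, so $\overline{\Pi }^{\ast }$ is only \emph{dominated} by a function in $RV(-\alpha )$ and need not be regularly varying at all; consequently neither ``$U(t)\overline{\Pi }^{\ast }(t)\in RV(-1)$'' nor ``$\overline{\mu }^{\ast }\in RV(-1)$'' is justified (from $A^{\ast }\in RV(0)$ and monotonicity you get only $x\overline{\mu }^{\ast }(x)=o(A^{\ast }(x))$, which, fortunately, is all you actually use). Without regular variation of $\overline{\Pi }^{\ast }$, Karamata-type manipulations of $\int_{x}^{\infty }U(dy)\overline{\Pi }^{\ast }(y)$ break down, and a crude bound such as $U(x)\int_{x}^{\infty }\overline{\Pi }^{\ast }$ cannot be closed. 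This is exactly where the paper does its real work: it shows $U(y)\overline{\Pi }^{\ast }(y)=o(A^{\ast }(y)/y)$ and then bounds $x\int_{x}^{\infty }A^{\ast }(y)U(dy)/(yU(y))$ by $x\sup_{y\geq x}\bigl(A^{\ast }(y)y^{\beta }/U(y)\bigr)\int_{x}^{\infty }y^{-1-\beta }U(dy)$ with $\beta =\alpha \rho /2$, which regular variation of $U$ and $A^{\ast }$ alone evaluates as $O(A^{\ast }(x))$, whence the whole term is $o(A^{\ast }(x))$.

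That said, your target intermediate bound $E(x)=O(x\overline{\mu }^{\ast }(x))$ is in fact correct and would give a cleaner proof than the paper's, but it needs an ingredient you do not name: the renewal translation inequality $U(x+[0,h])\leq U([0,h])$ (strong Markov property of $H$ at first passage over $x$). Since $z\mapsto \overline{\Pi }^{\ast }(x+z)$ is non-increasing, a level-set (layer-cake) argument then gives
\begin{equation*}
\int_{(x,\infty )}U(dy)\overline{\Pi }^{\ast }(y)=\int_{(0,\infty )}U(x+dz)\overline{\Pi }^{\ast }(x+z)\leq \int_{[0,\infty )}U(dz)\overline{\Pi }^{\ast }(x+z)=\overline{\mu }^{\ast }(x),
\end{equation*}
so the tail piece is at most $x\overline{\mu }^{\ast }(x)=o(A^{\ast }(x))$. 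With that inequality supplied, your plan goes through (and avoids the paper's $\beta =\alpha \rho /2$ device); without it, the crucial estimate is asserted rather than proved.
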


\begin{proof}
Integrating Vigon's identity gives%
\begin{eqnarray*}
A^{\ast }(x) &=&\int_{0}^{x}\int_{0}^{\infty }U(dz)\overline{\Pi }^{\ast
}(y+z)dy \\
&=&\int_{0}^{\infty }U(dz)\int_{0}^{x}\overline{\Pi }^{\ast
}(y+z)dy=\int_{0}^{\infty }U(dz)\int_{z}^{x+z}\overline{\Pi }^{\ast }(w)dw \\
&=&\int_{0}^{\infty }\overline{\Pi }^{\ast
}(w)dw\int_{(w-x)^{+}}^{w}U(dz)=B(x)+E(x),
\end{eqnarray*}%
where $E(x)=\int_{x}^{\infty }\overline{\Pi }^{\ast
}(w)dw\int_{w-x}^{w}U(dz).$ If we put $\overline{U}(x)=\int_{0}^{x}U(y)dy$
an integration by parts gives%
\begin{eqnarray*}
E(x) &=&\int_{x}^{\infty }\Pi ^{\ast }(dy)\int_{x}^{y}\{U(w)-U(w-x)\}dw \\
&=&\int_{x}^{\infty }\Pi ^{\ast }(dy)\{\overline{U}(y)-\overline{U}(x)-%
\overline{U}(y-x)\} \\
&\leq &x\int_{x}^{\infty }\Pi ^{\ast }(dy)U(y)=x\{\overline{\Pi }^{\ast
}(x)U(x)+\int_{x}^{\infty }\overline{\Pi }^{\ast }(y)U(dy)\}.
\end{eqnarray*}%
Since $A^{\ast \prime }(x)=\overline{\mu }^{\ast }(x)$ and $A^{\ast }\in
RV(0)$ we know that $x\overline{\mu }^{\ast }(x)=o(A^{\ast }(x))$ as $%
x\rightarrow \infty .$ Also%
\begin{eqnarray*}
\overline{\mu }^{\ast }(x) &=&\int_{0}^{\infty }U(dz)\overline{\Pi }^{\ast
}(x+z)\geq \int_{0}^{x}U(dz)\overline{\Pi }^{\ast }(x+z) \\
&\geq &U(x)\overline{\Pi }^{\ast }(2x)\geq CU(2x)\overline{\Pi }^{\ast }(2x),
\end{eqnarray*}%
where we have used Erickson's \cite{erickson} bounds for $U.$ Thus $x\overline{\Pi }^{\ast
}(x)U(x)\leq Cx\overline{\mu }^{\ast }(x/2)=o(A^{\ast }(x)).$ Hence%
\begin{equation*}
x\int_{x}^{\infty }\overline{\Pi }^{\ast }(y)U(dy)=o\left( x\int_{x}^{\infty
}\frac{A^{\ast }(y)U(dy)}{yU(y)}\right) ,
\end{equation*}%
and we can bound the bracketed term on the RHS by%
\begin{equation*}
x\sup_{y\geq x}\left( \frac{A^{\ast }(y)y^{\beta }}{U(y)}\right) \bullet
\int_{x}^{\infty }\frac{U(dy)}{y^{1+\beta }},
\end{equation*}%
where we choose $\beta =\alpha \rho /2$ and recall that $U\in RV(\alpha \rho
).$ From standard properties of regularly varying functions we see that this
last expression is asymptotically equivalent to%
\begin{equation*}
Cx\frac{A^{\ast }(x)x^{\beta }}{U(x)}\bullet \frac{U(x)}{x^{1+\beta }}%
=CA^{\ast }(x),
\end{equation*}%
so we can conclude that $E(x)/A^{\ast }(x)\rightarrow 0,$ which gives the
result.
\end{proof}

This result immediately implies that the function $B(c(t))$ is monotone and
slowly varying. It is therefore possible to find $\delta _{t}\downarrow 0$
such that $\delta _{t}c(t)\rightarrow \infty $ and $L(t):=B(\delta
_{t}c(t))\backsim B(c(t))$ is also slowly varying. Moreover, since for each
fixed $\delta $ we have $t\overline{\Pi }^{\ast }(\delta c(t))=o(t\overline{%
\Pi }(\delta c(t))=o(1),$ we can also arrange that $t\overline{\Pi }^{\ast
}(\delta _{t}c(t))\rightarrow 0.$

\begin{proposition}
\label{K}Define, for $y\geq 0,$ the function 
\begin{equation*}
\psi (y,t)=\int_{0}^{\delta _{t}c(t)}U(z)\overline{\Pi }^{\ast }(z+y)dz,
\end{equation*}%
and note that $\psi (0,t)=L(t).$ Then we have the estimate, uniform for $%
y\geq 0,$ 
\begin{equation*}
\theta (t,y)=\frac{\overline{\rho }\psi (y,t)\underline{n}^{d}(\zeta >t)}{%
tL(t)}+o(t^{-1}\underline{n}(\zeta >t))\text{ as }t\rightarrow \infty .
\end{equation*}%
In particular, $h_{0}(t)\backsim \overline{\rho }t^{-1}\underline{n}%
^{d}(\zeta >t).$
\end{proposition}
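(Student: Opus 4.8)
The strategy is to imitate the proof of Proposition \ref{Z}, but since here all the mass of $\underline{n}_{t}$ that is weighted by $\overline{\Pi}^{\ast}$ sits on the scale $\delta_{t}c(t)=o(c(t))$, the role that Lemma \ref{CD} played there is now taken by Corollary \ref{Q}. Throughout I keep fixed the sequence $\delta_{t}\downarrow 0$ with $\delta_{t}c(t)\to\infty$, $L(t)=B(\delta_{t}c(t))\sim B(c(t))$ and $t\overline{\Pi}^{\ast}(\delta_{t}c(t))\to 0$ arranged before the statement. Before anything else I would pin down the order of magnitude of $L$: by Lemma \ref{L} and the choice of $\delta_{t}$, $L(t)\sim B(c(t))\sim A^{\ast}(c(t))$, while standard renewal estimates (Erickson's bounds, applied to the relatively stable subordinator $H^{\ast}$) give $U^{\ast}(x)\asymp x/(d^{\ast}+A^{\ast}(x))\asymp x/A^{\ast}(x)$ since $A^{\ast}(\infty)=\infty$; combining with Lemma \ref{L3},
\begin{equation*}
\frac{L(t)}{c(t)}\ \asymp\ \frac{1}{U^{\ast}(c(t))}\ \asymp\ \underline{n}(\zeta>t),\qquad t\to\infty .
\end{equation*}
This has to be in place first, because it is what lets me absorb the error terms below and, at the very end, pass between $L(t)/c(t)$ and $\underline{n}(\zeta>t)$ without circularity.

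Next I would decompose $\theta$. By Tonelli,
\begin{equation*}
\theta(t,y)=\int_{(0,\infty)}\overline{\Pi}^{\ast}(x+y)\,\underline{n}_{t}(dx)=\int_{(0,\infty)}\underline{n}_{t}\big((0,w)\big)\,\Pi^{\ast}(y+dw),
\end{equation*}
where $\Pi^{\ast}(y+dw)$ is the image of $\Pi^{\ast}$ restricted to $(y,\infty)$ under $v\mapsto v-y$ (for $y=0$ the integrability near $w=0$ is automatic from $\theta(t,0)=h_{0}(t)<\infty$, and in any case follows from Lemma \ref{B} together with the bound $\underline{n}_{t}((0,w))\le 2f(0)wU(w)/(tc(t))$ coming from Corollary \ref{Q}). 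I split at $w=\delta_{t}c(t)$. On $\{w>\delta_{t}c(t)\}$, using $\underline{n}_{t}((0,w))\le\underline{n}(\zeta>t)$, the contribution is at most $\underline{n}(\zeta>t)\overline{\Pi}^{\ast}(\delta_{t}c(t))=o(t^{-1}\underline{n}(\zeta>t))$ by the choice of $\delta_{t}$. On $\{w\le\delta_{t}c(t)\}$, Corollary \ref{Q} gives $\underline{n}_{t}((0,w))=f(0)\overline{U}(w)(1+o(1))/(tc(t))$ uniformly in such $w$ (the fixed sequence $\delta_{t}$ is eventually below any threshold appearing there), and an integration by parts against the absolutely continuous non-decreasing function $\overline{U}$ yields
\begin{equation*}
\int_{(0,\delta_{t}c(t)]}\overline{U}(w)\,\Pi^{\ast}(y+dw)=\psi(y,t)-\overline{U}(\delta_{t}c(t))\,\overline{\Pi}^{\ast}(y+\delta_{t}c(t)).
\end{equation*}

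The key point is that this boundary term is negligible: $\overline{U}(\delta_{t}c(t))\overline{\Pi}^{\ast}(y+\delta_{t}c(t))\le\overline{U}(a)\overline{\Pi}^{\ast}(a)$ with $a=\delta_{t}c(t)$, and $\overline{U}(a)\overline{\Pi}^{\ast}(a)=o(B(a))=o(L(t))$. Indeed, fix $\varepsilon\in(0,1)$; since $\overline{\Pi}^{\ast}$ is non-increasing and $\overline{U}\in RV(1+\alpha\rho)$,
\begin{equation*}
B(a)-B(\varepsilon a)=\int_{\varepsilon a}^{a}U(z)\overline{\Pi}^{\ast}(z)\,dz\ \ge\ \overline{\Pi}^{\ast}(a)\big(\overline{U}(a)-\overline{U}(\varepsilon a)\big)\ \sim\ (1-\varepsilon^{1+\alpha\rho})\,\overline{\Pi}^{\ast}(a)\overline{U}(a),
\end{equation*}
while the left side is $o(B(a))$ because $B\sim A^{\ast}\in RV(0)$; letting $\varepsilon\downarrow 0$ after $t\to\infty$ gives the claim. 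Hence $f(0)\overline{U}(a)\overline{\Pi}^{\ast}(\cdot)/(tc(t))=o(L(t)/(tc(t)))=o(t^{-1}\underline{n}(\zeta>t))$ uniformly in $y$, and the multiplicative $(1+o(1))$ on $\psi(y,t)$ is absorbed in the same way since $\psi(y,t)\le\psi(0,t)=L(t)$. Collecting the pieces,
\begin{equation*}
\theta(t,y)=\frac{f(0)\,\psi(y,t)}{tc(t)}+o\big(t^{-1}\underline{n}(\zeta>t)\big)\qquad\text{uniformly in }y\ge 0,
\end{equation*}
and in particular $h_{0}(t)=\theta(t,0)\sim f(0)L(t)/(tc(t))$, the main term being of exact order $t^{-1}\underline{n}(\zeta>t)$ by the first step.

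Finally I would close the loop by integrating. Since $L$ is slowly varying and $c\in RV(\eta)$, $L(s)/(sc(s))\in RV(-1-\eta)$, so Karamata's theorem gives $\underline{n}^{d}(\zeta>t)=\int_{t}^{\infty}h_{0}(s)\,ds\sim f(0)\int_{t}^{\infty}L(s)/(sc(s))\,ds\sim \eta^{-1}f(0)L(t)/c(t)$, i.e. $f(0)/c(t)\sim\eta\,\underline{n}^{d}(\zeta>t)/L(t)$. Substituting into the displayed formula for $\theta$ (the new $o(1)$ error controlled exactly as before via $\psi(y,t)\le L(t)$ and $\underline{n}^{d}(\zeta>t)\le\underline{n}(\zeta>t)$) and using $\overline{\rho}=1/\alpha=\eta$, which holds precisely because $\alpha\overline{\rho}=1$, yields $\theta(t,y)=\overline{\rho}\,\psi(y,t)\,\underline{n}^{d}(\zeta>t)/(tL(t))+o(t^{-1}\underline{n}(\zeta>t))$; taking $y=0$, $\psi(0,t)=L(t)$ and $\underline{n}^{d}(\zeta>t)\asymp\underline{n}(\zeta>t)$ (from the first step and the Karamata estimate) gives $h_{0}(t)\sim\overline{\rho}\,t^{-1}\underline{n}^{d}(\zeta>t)$. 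I expect the main obstacle to be exactly this interlocking of the first step with the final integration — nothing can be simplified, neither $h_{0}$ itself nor the error bounds, before $c(t)^{-1}L(t)$ is known to be comparable with $\underline{n}(\zeta>t)$ — together with the need to upgrade $\overline{U}(\delta_{t}c(t))\overline{\Pi}^{\ast}(\delta_{t}c(t))=O(L(t))$ to $o(L(t))$, where the slow variation of $B\sim A^{\ast}$ supplied by Lemma \ref{L} is indispensable.
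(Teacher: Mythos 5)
Your proof is correct and follows essentially the same route as the paper's: truncate at $\delta_{t}c(t)$, discard the far tail using $t\overline{\Pi}^{\ast}(\delta_{t}c(t))\to 0$, apply Corollary \ref{Q} to the entrance law on the remaining range to produce the main term $f(0)\psi(y,t)/(tc(t))$, and then integrate $h_{0}$ and use Karamata to identify $f(0)L(t)/(\overline{\rho}c(t))$ with $\underline{n}^{d}(\zeta>t)$. Two local differences are worth recording. First, you establish up front that $L(t)/c(t)\asymp\underline{n}(\zeta>t)$, via $L(t)\sim B(c(t))\asymp A^{\ast}(c(t))$, Erickson's bounds $U^{\ast}(x)\asymp x/(d^{\ast}+A^{\ast}(x))$, and Lemma \ref{L3}; the paper leaves this implicit, but it is genuinely needed both to absorb the additive error $o(t^{-1}\underline{n}(\zeta>t))$ when integrating $h_{0}(s)$ over $(t,\infty)$ and to read off $h_{0}(t)\sim\overline{\rho}\,t^{-1}\underline{n}^{d}(\zeta>t)$ from the displayed estimate, so making it explicit is an improvement. (Your parenthetical ``since $A^{\ast}(\infty)=\infty$'' covers only one of the two subcases of $\alpha\overline{\rho}=1$; when $\mathbb{E}H_{1}^{\ast}<\infty$ the same comparisons hold because $d^{\ast}+A^{\ast}(x)$, $A^{\ast}(x)$ and $B(x)$ all tend to positive finite constants with $A^{\ast}(\infty)=B(\infty)$.) Second, because you apply Tonelli before truncating, you split on the jump variable $w$ rather than on the excursion variable $z$, and so pick up the boundary term $\overline{U}(a)\overline{\Pi}^{\ast}(y+a)$ with $a=\delta_{t}c(t)$; your slow-variation argument giving $\overline{U}(a)\overline{\Pi}^{\ast}(a)=o(B(a))=o(L(t))$ is correct, but the term disappears altogether if, as in the paper, one first restricts to $z\leq\delta_{t}c(t)$ and only then exchanges the order of integration, since $\int_{y}^{\infty}\Pi^{\ast}(dw)\int_{0}^{(w-y)\wedge\delta_{t}c(t)}U(z)\,dz=\psi(y,t)$ exactly.
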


\begin{proof}
Clearly, since 
\begin{equation*}
\int_{\delta _{t}c(t)}^{\infty }\underline{n}_{t}(dz)\overline{\Pi }^{\ast
}(z)\leq \overline{\Pi }^{\ast }(\delta _{t}c(t))\underline{n}(\zeta
>t)=o(t^{-1}\underline{n}(\zeta >t)),
\end{equation*}%
we have 
\begin{eqnarray}
\theta (t,y) &=&\int_{0}^{\infty }\underline{n}_{t}(dz)\overline{\Pi }^{\ast
}(z+y)  \notag \\
&=&\int_{0}^{\delta _{t}c(t)}\underline{n}_{t}(dz)\overline{\Pi }^{\ast
}(z+y)+o(t^{-1}\underline{n}(\zeta >t)).  \label{b}
\end{eqnarray}%
We can apply Proposition \ref{prop2} to get%
\begin{eqnarray*}
\int_{0}^{\delta _{t}c(t)}\underline{n}_{t}(dz)\overline{\Pi }^{\ast }(z+y)
&=&\int_{0}^{\delta _{t}c(t)}\underline{n}_{t}(dz)\int_{z+y}^{\infty }\Pi
^{\ast }(dw) \\
&=&\int_{y}^{\infty }\Pi ^{\ast }(dw)\int_{0}^{(w-y)\wedge \delta _{t}c(t)}%
\underline{n}_{t}(dz) \\
&\backsim &\frac{f(0)}{tc(t)}\int_{y}^{\infty }\Pi ^{\ast
}(dw)\int_{0}^{(w-y)\wedge \delta _{t}c(t)}U(z)dz \\
&=&\frac{f(0)}{tc(t)}\int_{0}^{\delta _{t}c(t)}U(z)\overline{\Pi }^{\ast
}(z+y)dz.
\end{eqnarray*}%
In particular, we have 
\begin{equation*}
h_{0}(t)=\theta (t,0)=\frac{f(0)L(t)}{tc(t)}+o(t^{-1}\underline{n}(\zeta
>t)),
\end{equation*}%
and since the first term $\in RV(-(1+\eta ))$ and $\eta =\overline{\rho }$
we can integrate this to give 
\begin{equation}
\frac{f(0)L(t)}{\overline{\rho }c(t)}\backsim \underline{n}^{d}(\zeta >t),
\label{y}
\end{equation}%
and hence $\theta (t,0)\backsim \overline{\rho }t^{-1}\underline{n}%
^{d}(\zeta >t).$ The result for $y>0$ then follows from (\ref{b}).
\end{proof}

\begin{remark}
The results in the following section will demonstrate that we have\linebreak 
$\underline{n}^{d}(\zeta >t)\backsim p\underline{n}(\zeta >t)$ and then (\ref%
{1.4}) follows for the case $\alpha \overline{\rho }=1.$
\end{remark}

\subsection{The continuous case}

It turns out that we need to establish some parts of Theorem \ref{thm2}
before we can conclude the proof of Theorem \ref{thm1}.

\begin{theorem}
\label{S}Suppose the drift $d^{\ast }$of $H^{\ast }$ is positive. Then
uniformly in $\Delta $ and $x>0$ such that $\frac{x}{c(t)}\rightarrow 0,$%
\begin{equation}
\mathbb{P}_{x}^{c}(T\in (t,t+\Delta ])\backsim \frac{f(0)d^{\ast }\Delta
U^{\ast }(x)}{tc(t)}\text{ as }t\rightarrow \infty ,  \label{7}
\end{equation}%
and uniformly in $\Delta $ and $x>0$
\end{theorem}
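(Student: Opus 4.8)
The plan is to reduce the statement to the creeping structure of the downgoing ladder height process $H^{\ast}$ and then to feed in the dual form of the excursion estimate of Corollary~\ref{Q}. Write $\mathbb{P}_{x}^{c}(T_{0}\in\cdot)=\mathbb{P}_{x}(T_{0}\in\cdot,C_{0})$. Since $C_{0}=\{X_{T_{0}-}=0\}$ is exactly the event that $H^{\ast}$, which has drift $d^{\ast}>0$, passes continuously over level $x$, the first step is to establish a space--time refinement of the creeping identity $\nu(x)=d^{\ast}u^{\ast}(x)$, namely
\begin{equation*}
\mathbb{P}_{x}^{c}(T_{0}\in(t,t+\Delta])=d^{\ast}\,\mu_{t,\Delta}(x),
\end{equation*}
where $\mu_{t,\Delta}(\cdot)$ denotes the Lebesgue density in the space variable of the measure $W^{\ast}((t,t+\Delta]\times dx)$, which by the duality identity (\ref{W}) of Lemma~\ref{LC} equals $\int_{t}^{t+\Delta}\overline{n}(\epsilon_{s}\in dx,\zeta>s)\,ds$ for $x>0$ (the atom $a\delta_{(0,0)}$ plays no role). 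This identity follows from the quintuple identity of \cite{dk} (or from the classical creeping formulae, see \cite{bertoinbook}) together with (\ref{W}); as for $u^{\ast}$, the positivity of $d^{\ast}$ ensures that this $x$-marginal is absolutely continuous and that $x\mapsto\mathbb{P}_{x}^{c}(T_{0}\in(t,t+\Delta])$ is continuous, and integrating over all $t$ and using (\ref{U*}) recovers $d^{\ast}u^{\ast}(x)=\nu(x)$. It is essential to phrase things through the measures $W^{\ast}((t,t+\Delta]\times dx)$ rather than through a density in $t$: on $C_{0}$ the law of $T_{0}$ need not be absolutely continuous (cf.\ the Orey-type examples recalled in Section~2), which is precisely why the result is stated locally.

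For the first display (\ref{7}), apply Corollary~\ref{Q} --- equivalently Proposition~\ref{prop2} --- to the dual process $X^{\ast}=-X$, whose reflected-at-the-infimum excursion measure is $\overline{n}$ and whose ascending ladder renewal function is $U^{\ast}$; this gives $tc(t)\,\overline{n}(\epsilon_{t}\in(x,y],\zeta>t)\sim f(0)\int_{x}^{y}U^{\ast}(z)\,dz$ uniformly for $0\le x\le y=o(c(t))$. Integrating over $s\in(t,t+\Delta]$, using $c(s)\sim c(t)$ uniformly on $[t,t+\Delta_{0}]$ and $\int_{t}^{t+\Delta}(sc(s))^{-1}\,ds\sim\Delta/(tc(t))$ uniformly in $\Delta\le\Delta_{0}$ (as $s\mapsto sc(s)\in RV(1+\eta)$ with $1+\eta>1$), one obtains
\begin{equation*}
\int_{x}^{x+h}\mu_{t,\Delta}(y)\,dy=\int_{t}^{t+\Delta}\overline{n}(\epsilon_{s}\in(x,x+h],\zeta>s)\,ds\sim\frac{f(0)\Delta}{tc(t)}\int_{x}^{x+h}U^{\ast}(z)\,dz,
\end{equation*}
uniformly in $x=o(c(t))$, $\Delta\le\Delta_{0}$ and $h$ in a bounded set. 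Dividing by $h$, letting $h\downarrow0$ and invoking the continuity of $U^{\ast}$ gives $\mu_{t,\Delta}(x)\sim f(0)\Delta U^{\ast}(x)/(tc(t))$, and multiplication by $d^{\ast}$ yields (\ref{7}).

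For the companion estimate valid for all $x>0$ (the analogue of (\ref{1.6}) for the continuous-passage part, which should identify $\tfrac{t}{\Delta}\mathbb{P}_{x}^{c}(T_{0}\in(t,t+\Delta])$ with $\tilde{h}^{c}_{x_{t}}(1)+o(1)$, where $\tilde{h}^{c}$ is the continuous-passage part of $\tilde{h}$), by the reduction already used for Theorem~\ref{thm2} it is enough to treat $x_{t}=x/c(t)$ in a fixed compact subset of $(0,\infty)$. If $\alpha\overline{\rho}<1$ the limiting stable process $Y$ does not creep downward, so $\tilde{h}^{c}\equiv0$ and it suffices to show $\mathbb{P}_{x}^{c}(T_{0}\in(t,t+\Delta])=o(\Delta/t)$; this follows from the dual of the bound in Corollary~\ref{M}, $tc(t)\overline{n}(\epsilon_{t}\in(x,x+\Delta],\zeta>t)\le k\Delta U^{\ast}(x+\Delta)$ valid for all $x\ge0$, which via the creeping identity gives $\mathbb{P}_{x}^{c}(T_{0}\in(t,t+\Delta])\le Cd^{\ast}\Delta U^{\ast}(x)/(tc(t))$, together with $U^{\ast}(c(t))/c(t)\in RV(\overline{\rho}-\eta)$, which tends to $0$. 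If $\alpha\overline{\rho}=1$, then $Y$ is spectrally positive and creeps downward, and I would transfer the creeping identity to $Y$: using that $(X_{ts}/c(t))_{s\ge0}\Rightarrow Y$ together with the convergence of the bivariate downgoing ladder processes (\cite{chaumontanddoney}, as in the proof of Lemma~\ref{L3}) --- so that in particular the ladder drifts converge and continuous passage transfers --- and the convergence of the normalised excursion law to the stable meander law (the dual of Lemma~\ref{CD}), one rescales $\mathbb{P}_{x}^{c}(T_{0}\in(t,t+\Delta])=d^{\ast}\mu_{t,\Delta}(x)$ to obtain $\tfrac{t}{\Delta}\mathbb{P}_{x}^{c}(T_{0}\in(t,t+\Delta])\to\tilde{h}^{c}_{x_{t}}(1)$, uniformly in $x_{t}$ over compacts by the local uniformity of these convergences and the continuity of $b\mapsto\tilde{h}^{c}_{b}(1)$ (which comes from scaling and the smoothness of stable densities).

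The main obstacle lies in the first display: first, making the space--time creeping identity precise and working with it only through intervals (because of the possible $t$-singularity of $\mathbb{P}_{x}^{c}(T_{0}\in\cdot)$); and second, the passage from the interval estimate of the dual Corollary~\ref{Q} to the creeping density $\mu_{t,\Delta}(x)$ with the required \emph{simultaneous} uniformity in $x=o(c(t))$ and $\Delta\in(0,\Delta_{0}]$ --- that is, interchanging $h\downarrow0$ and $t\to\infty$, which works precisely because Corollary~\ref{Q} is uniform over all pairs $0\le x\le y=o(c(t))$, in particular over small increments $h$. The second display is comparatively routine given the machinery already developed for Theorem~\ref{thm2} and for the continuous/discontinuous split.
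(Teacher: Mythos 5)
Your proof of the first display (\ref{7}) is essentially the paper's argument: both rest on the space--time creeping identity expressing $\mathbb{P}_{x}^{c}(T\in (t,t+\Delta ])$ as $d^{\ast }$ times the spatial density of $\int_{t}^{t+\Delta }\overline{n}_{u}(\cdot )du$, followed by the dual form of Proposition \ref{prop2} (Corollary \ref{Q}) and the passage $h\downarrow 0$, which is legitimate precisely because of the uniformity in the increment. The one point where you are lighter than the paper is the creeping identity itself: the paper does not rederive it from the quintuple identity but invokes Theorem 3.1 of \cite{gm}, which supplies both the differentiability of $W^{\ast }(t,x)$ in $x$ and the identity $\mathbb{P}_{x}^{c}(T\leq t)=d^{\ast }\,\partial _{x}W^{\ast }(t,x)$; your sketch of that step is the only place where real work remains, and citing \cite{gm} closes it. Finally, the second assertion of the theorem (equation (\ref{8}), truncated in the statement you were given) reads $\mathbb{P}_{x}^{c}(T\in (t,t+\Delta ])=d^{\ast }\Delta \overline{n}(\zeta >t)c(t)^{-1}({g}^{\ast }(x_{t})+o(1))$ uniformly in $\Delta $ and $x>0$, and the paper obtains it by running the identical argument with Proposition \ref{prop1} in place of Proposition \ref{prop2}; the transfer-to-$Y$ route you sketch for the large-$x$ regime is not needed and would require justifying convergence of the rescaled ladder drifts, which the paper's direct argument avoids.
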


\begin{equation}
\mathbb{P}_{x}^{c}(T\in (t,t+\Delta ])=\frac{d^{\ast }\Delta \overline{n}%
(\zeta >t)}{c(t)}\text{ }({g}^{\ast }(x_{t})+o(1))\text{ as }t\rightarrow
\infty .  \label{8}
\end{equation}

\begin{proof}
We use the result, from Theorem 3.1 of \cite{gm}, which states that whenever 
$d^{\ast }>0$ the bivariate renewal function $W^{\ast }(t,x)$ is
differentiable in $x$ for each $t>0,$ and\
\begin{equation*}
\mathbb{P}_{x}^{c}(T\leq t)=d^{\ast }\frac{dW^{\ast }(t,x)}{dx}.
\end{equation*}%
Recall also from Lemma \ref{LC} that $W^{\ast
}(t,x)=a+\int_{u=0}^{t}\int_{y=0}^{x}\overline{n}_{u}(dy)du,$ so that 
\begin{equation*}
\mathbb{P}_{x}^{c}(T\in (t,t+\Delta ])=d^{\ast }\int_{t}^{t+\Delta
}\lim_{h\downarrow 0}\frac{\overline{n}_{u}((x,x+h])}{h}du.
\end{equation*}%
However, by applying Proposition \ref{prop2} to $-X$ we can approximate $%
\overline{n}_{u}((x,x+h])$ uniformly in $x$ and $h,$ and see that, given any 
$\varepsilon >0,$ for $u\in \lbrack t,t+\Delta ],$ $t$ large enough, and $%
x/c(t)$ small enough$,$ 
\begin{equation*}
\frac{(1-\varepsilon )f(0)U^{\ast }(x)}{uc(u)}\leq \lim_{h\downarrow 0}\frac{%
\overline{n}_{u}((x,x+h])}{h}\leq \frac{(1+\varepsilon )f(0)U^{\ast }(x)}{%
uc(u)}
\end{equation*}%
and then (\ref{7}) is immediate. The statement (\ref{8}) is proved in
exactly the same way, but using the approximation from Proposition \ref%
{prop1}.
\end{proof}

For the next result, we need the following identity, in which $q_{t}(z)$
(respectively $q_{t}^{\ast }(z))$ denotes the density function $\underline{n}%
_{t}^{Y}(dz)/dz$ (respectively $\overline{n}_{t}^{Y}(dz)/dz).$

\begin{lemma}\label{lem:29}
For any fixed $0<s<t,$
\begin{equation}
\int_{0}^{\infty }q_{s}(z)q_{t-s}^{\ast }(z)dz=\frac{f_{t}(0)}{t}%
=t^{-(1+\eta )}f(0).  \label{u}
\end{equation}
\end{lemma}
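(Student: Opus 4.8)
The plan is to recognise the integral in (\ref{u}) as the value at a single point of the joint density of $(g_{t},Y_{t})$ under $\mathbb{P}_{0}$, where $g_{t}$ denotes the last instant in $[0,t]$ at which $Y$ attains its running infimum, and then to invoke the fact that the minimiser of a stable bridge from $0$ to $0$ is uniformly distributed. First recall that, by the scaling relation (\ref{ss}) and the meander representation (\ref{stablemeander}), the measures $\underline{n}_{u}^{Y}$ and $\overline{n}_{u}^{Y}$ are absolutely continuous for every $u>0$, so the densities $q_{u}$ and $q_{u}^{\ast }$ are well defined and carry no atom at the origin; and that $Y_{t}$ has density $f_{t}(\cdot )=t^{-\eta }f(t^{-\eta }\cdot )$, so $f_{t}(0)=t^{-\eta }f(0)$ and $t^{-(1+\eta )}f(0)=f_{t}(0)/t$.

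The first step is the last-exit decomposition of the path of $Y$ under $\mathbb{P}_{0}$ at $g_{t}$; this is precisely the decomposition that yields (\ref{lastexit0}), performed before one integrates over the last-exit time. The pair $(g_{t},-Y_{g_{t}})$ is the last point in time of the range of the downward ladder process $(\tau ^{\ast },H^{\ast })$ of $Y$ lying in $[0,t]\times [0,\infty )$, so by Lemma \ref{LC} it contributes $W^{\ast }(ds,dz)=\overline{n}_{s}^{Y}(dz)\,ds=q_{s}^{\ast }(z)\,dz\,ds$ on $(0,\infty )\times (0,\infty )$ (the atom of $W^{\ast }$ at $(0,0)$ being immaterial, since $Y$, not being a subordinator, a.s. quits $[0,\infty )$ before time $t$); and the straddling piece on $[g_{t},t]$, shifted to start at $Y_{g_{t}}$, is the beginning of an excursion of $Y$ above its infimum of length exceeding $t-s$, which at time $t-s$ into it occupies the value $v$ with intensity $\underline{n}_{t-s}^{Y}(dv)=q_{t-s}(v)\,dv$, while $Y_{t}=-z+v$. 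Altogether, for $s\in (0,t)$ and $y\in \mathbb{R}$,
\begin{equation*}
\mathbb{P}_{0}\bigl(g_{t}\in ds,\,Y_{t}\in dy\bigr)=\Bigl(\int_{0\vee (-y)}^{\infty }q_{s}^{\ast }(z)\,q_{t-s}(z+y)\,dz\Bigr)\,ds\,dy.
\end{equation*}
As a check, integrating the right-hand side over $y\in \mathbb{R}$ returns $\overline{n}^{Y}(\zeta >s)\,\underline{n}^{Y}(\zeta >t-s)\,ds$, the classical (generalised arcsine) density of $g_{t}$, and integrating it also over $s\in (0,t)$ recovers (\ref{lastexit0}) at $x=y=0$. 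Taking $y=0$, the joint density of $(g_{t},Y_{t})$ at $(s,0)$ equals $\int_{0}^{\infty }q_{s}^{\ast }(z)\,q_{t-s}(z)\,dz$.

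The second step identifies this density. Since $Y$ is stable it has a.s. a unique running minimum on $[0,t]$, so $g_{t}$ coincides a.s. with the time of that minimum; and since $f_{t}$ is continuous and strictly positive the bridge $\mathbb{P}_{0}(\,\cdot \mid Y_{t}=0)$ is well defined. By the cyclic exchangeability of the increments of a L\'evy bridge from $0$ to $0$, together with the cycle lemma (the Vervaat-type argument), the time at which such a bridge attains its minimum is uniformly distributed on $[0,t]$. Consequently
\begin{equation*}
\int_{0}^{\infty }q_{s}^{\ast }(z)\,q_{t-s}(z)\,dz=f_{t}(0)\,\frac{\mathbb{P}_{0}\bigl(g_{t}\in ds\mid Y_{t}=0\bigr)}{ds}=\frac{f_{t}(0)}{t},
\end{equation*}
independently of $s\in (0,t)$; replacing $s$ by $t-s$, which leaves the right-hand side unchanged, gives $\int_{0}^{\infty }q_{s}(z)\,q_{t-s}^{\ast }(z)\,dz=f_{t}(0)/t=t^{-(1+\eta )}f(0)$, which is (\ref{u}).

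The substantive input, and what I expect to be the main obstacle, is the uniform law of the minimiser of the stable bridge; the path decomposition of the first step is routine once (\ref{lastexit0}) and Lemma \ref{LC} are in hand (only its unintegrated form is needed, and the normalisation is already fixed by (\ref{norm})). If one prefers to avoid quoting the bridge result, the scaling relations for $q_{u}$ and $q_{u}^{\ast }$ already show that $\int_{0}^{\infty }q_{s}(z)\,q_{t-s}^{\ast }(z)\,dz=t^{-(1+\eta )}\phi (s/t)$ for a fixed function $\phi $ on $(0,1)$, and (\ref{lastexit0}) at $x=y=0$ forces $\int_{0}^{1}\phi (\sigma )\,d\sigma =f(0)$; the constancy $\phi \equiv f(0)$ is exactly what the cyclic exchangeability of the bridge supplies.
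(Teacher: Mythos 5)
Your proof is correct and takes essentially the same route as the paper: both identify $\int_0^\infty q_s q_{t-s}^\ast\,dz$ (via the unintegrated form of (\ref{lastexit0})) as $f_t(0)$ times the conditional density of the time of an extremum of $Y$ given $Y_t=0$, and then invoke the uniform law of that time for the stable bridge. The only cosmetic difference is that you decompose at the last infimum and appeal to cyclic exchangeability directly, whereas the paper works with the supremum time and cites Corollary 3 of \cite{chaumontsup2010} and \cite{chaumont97} for the same two facts.
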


\begin{proof}
Specialising (\ref{lastexit0}) to the stable case and observing that, in the
stable case both the ladder time processes have zero drift gives%
\begin{equation*}
f_{t}(0)=\int_{0}^{t}du\int_{0}^{\infty }q_{u}(z)q_{t-u}^{\ast }(z)dz.
\end{equation*}%
Now we can deduce from Corollary 3 of \cite{chaumontsup2010} that $%
\int_{0}^{\infty }q_{u}(z)q_{t-u}^{\ast }(z)dz/f_{t}(0)$ is the conditional
density function of the time at which $\sup (Y_{u},0\leq u\leq t)$ occurs,
given $Y_{t}=0$. However it is well-known that the time at which the
supremum of a stable bridge occurs has a uniform distribution, see e.g. \cite{chaumont97} Th\'eor\`eme 4, and the
result (\ref{u}) follows.
\end{proof}

\begin{theorem}
\label{R}If $d^{\ast }>0$ then (\ref{1.3}) holds, viz, uniformly in $\Delta
, $%
\begin{equation}
\underline{n}^{c}(\zeta \in (t,t+\Delta ])\backsim \frac{f(0)d^{\ast }\Delta 
}{tc(t)}\text{ as }t\rightarrow \infty .  \label{c}
\end{equation}
\end{theorem}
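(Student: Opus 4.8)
The plan is to derive the excursion estimate \eqref{c} from the already-established ``$x>0$'' estimate \eqref{7} by a last-exit (or entrance-law) decomposition, letting $x\downarrow 0$. Recall that the excursion measure $\underline{n}$ and the law $\mathbb{P}_x^c$ of continuous passage are linked through the drift $d^\ast$ of $H^\ast$: the renewal density $u^\ast$ satisfies $\nu(x)=d^\ast u^\ast(x)$ and $\lim_{x\downarrow 0}\nu(x)=1$. The natural route is to condition on the last visit of the reflected process to a small level before lifetime, or equivalently to use the identity from Lemma~\ref{lemma:1}(iii) specialised appropriately, which expresses $t\,\underline n(\epsilon_t\in dx)$ in terms of $\underline n_s(dz)$ and $\mathbb P_z(X_{t-s}\in dx)$. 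For the \emph{continuous} part of the lifetime, passage happens via creeping of the ladder height process, so the event $\{\zeta\in(t,t+\Delta],\ \epsilon(\zeta-)=0\}$ should be related, after the last excursion-away-from-$0$-of-size-$\geq x$ piece, to a continuous first passage of the killed process started from a small height.

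Concretely, first I would write, for small $x>0$,
\begin{equation*}
\underline{n}^{c}(\zeta\in(t,t+\Delta])=\int_{0}^{t}\int_{(0,x]}\underline{n}_{s}(dz)\,\mathbb{P}_{z}^{c}(T_{0}\in(t-s,t-s+\Delta])\,ds \;+\; R(x,t),
\end{equation*}
where $R(x,t)$ collects the contribution of excursions whose terminal continuous passage occurs from a height exceeding $x$, plus the ``direct'' creeping term governed by $a^\ast$ (which vanishes here since $d^\ast>0$ forces regularity downwards and hence $a=0$, while the relevant drift is $d^\ast$ itself). The point is that as $x\downarrow 0$ the mass $\underline n_s((0,x])$ concentrates near $z=0$, and by \eqref{7} the inner probability is $\sim f(0)d^\ast\Delta U^\ast(z)/((t-s)c(t-s))$ uniformly for $z/c(t-s)\to 0$. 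Since $U^\ast(z)\to U^\ast(0+)=a=0$... — no: here $d^\ast>0$ means $X$ creeps downwards, so $U^\ast$ has a bounded continuous density $u^\ast$ with $u^\ast(0+)=1/d^\ast$, giving $U^\ast(z)\sim z/d^\ast$ as $z\downarrow 0$. Thus the kernel behaves like $f(0)\Delta\, z/((t-s)c(t-s))$, and I would combine this with Corollary~\ref{Q} (which controls $\int_{(0,x]} z\,\underline n_s(dz)$ via $f(0)\int_0^x U(y)dy$) and the regular variation of $c$ to perform the $s$-integral by a Karamata-type argument, picking up the factor $t^{-1}c(t)^{-1}$ and a constant that one checks equals $d^\ast$ exactly. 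The scaling identity of Lemma~\ref{lem:29}, $\int_0^\infty q_s(z)q_{t-s}^\ast(z)dz=t^{-(1+\eta)}f(0)$, is the stable-case prototype of precisely this computation and should be used to pin down the constant and to handle the bridge-type cancellations cleanly.

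The main obstacle I expect is the uniformity and the treatment of the remainder $R(x,t)$: one must show that the contribution of last-excursion heights $z>x$ (before a continuous crossing) is negligible after sending $t\to\infty$ and then $x\downarrow 0$, i.e. a $\lim_{x\downarrow 0}\limsup_{t\to\infty}$ argument in the spirit of the $J_i^\delta$ decompositions used earlier in the paper. For this I would use the bound \eqref{3} of Lemma~\ref{A} (and Corollary~\ref{M}) to dominate $\underline n_s$-integrals, together with the fact that the continuous-passage density $u_u^\ast((x,x+h])/h$ is uniformly controlled by Proposition~\ref{prop2} applied to $-X$ exactly as in the proof of Theorem~\ref{S}, so that the ``$z>x$'' part is $O(x)\cdot$(the right order) uniformly in $t$, hence $o(1)$ as $x\downarrow 0$. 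A secondary subtlety is interchanging the $s$-integral with the asymptotics near $s=0$ and $s=t$: near $s=t$ the factor $1/((t-s)c(t-s))$ is singular, but it is integrable since $c\in RV(\eta)$ with $\eta=1/\alpha\le 1$ ... in fact $\int_0 du/(u c(u))$ may diverge, so one truncates at $\delta t$ as in Proposition~\ref{prop21} and shows the tail $\int_{\delta t}^{\infty}$ is $o(\underline n(\zeta>t)\Delta)$ via Corollary~\ref{M}; near $s=0$ one uses that $\underline n_s$ has small mass on $(0,x]$ for $s$ bounded away from $0$. Once these pieces are assembled, \eqref{c} follows, and the uniformity in $\Delta\in(0,\Delta_0]$ is inherited from the corresponding uniformity already built into \eqref{7} and Corollary~\ref{Q}.
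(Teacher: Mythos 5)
There is a genuine gap, and it concerns both the validity of your decomposition and, more importantly, where the dominant contribution actually lives. First, the identity you write down is not established: integrating the Markov property of $\underline{n}$ over $s\in(0,t)$ would put a factor $t$ on the left-hand side, while a genuine ``last visit to $(0,x]$'' decomposition is vacuous for a continuously-ending excursion (such a path lies in $(0,x]$ for all times sufficiently close to $\zeta$, so the last visit is at $\zeta-$); the analogue of (\ref{DA}) for the lifetime degenerates because the ``terminal position'' is $0$. Second, and more seriously, your plan extracts the asymptotics from the region of small excursion heights $z\le x$ with $x$ fixed, using (\ref{7}), $U^{\ast}(z)\sim z/d^{\ast}$ and Corollary \ref{Q}, and relegates everything else to a remainder $R(x,t)$ claimed to be negligible. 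The truth is the opposite: decomposing at the midpoint, $\underline{n}^{c}(\zeta\in(2t,2t+\Delta])=\int_{0}^{\infty}\underline{n}_{t}(dy)\,\mathbb{P}_{y}^{c}(T\in(t,t+\Delta])$, the contribution of heights $y\le D^{-1}c(t)$ is of relative order $D^{-(1+\alpha)}$ (precisely by Corollary \ref{Q} and (\ref{7}), since $\int_{0}^{D^{-1}c(t)}U^{\ast}(z)U(z)\,dz\lesssim D^{-(1+\alpha)}tc(t)$), hence negligible as $D\to\infty$; the mass that produces the constant comes from heights $y\asymp c(t)$, where one must use the meander-density forms (\ref{8}) and Proposition \ref{prop1}, together with Lemma \ref{L3} and the stable-bridge identity of Lemma \ref{lem:29}. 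Your route never touches that regime, so the part of the integral that actually matters sits inside your unanalyzed $R(x,t)$.

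Consistently with this, your constant cannot come out right: the small-$z$ computation with $x$ fixed yields something proportional to $\int_{0}^{x}U^{\ast}(z)\,U(dz)$, which depends on $x$ and diverges as $x\to\infty$, so no limiting procedure in $x$ recovers $f(0)d^{\ast}$. You correctly sense that Lemma \ref{lem:29} must pin down the constant, but in your scheme no meander densities $g,g^{\ast}$ ever appear, so the identity $\int_{0}^{\infty}q_{s}(z)q_{t-s}^{\ast}(z)\,dz=t^{-(1+\eta)}f(0)$ has nothing to attach to. The fix is to abandon the small-level conditioning and instead split the excursion at half its lifetime: bound the low part via Corollary \ref{Q} and (\ref{7}), evaluate the high part via (\ref{8}) and Proposition \ref{prop1} to get $\frac{1}{\Gamma(\rho)\Gamma(\overline{\rho})}\int_{D^{-1}}^{\infty}g(z)g^{\ast}(z)\,dz$, and then invoke Lemma \ref{lem:29} (with the regular variation of $tc(t)$ absorbing the factor $2^{-(1+\eta)}$) to identify the limit as $f(0)d^{\ast}$.
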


\begin{proof}
We will actually show that $\underline{n}^{c}(\zeta \in (2t,2t+\Delta
])\backsim 2^{-(1+\eta )}f(0)d^{\ast }\Delta (tc(t))^{-1},$ which is
equivalent to the stated result. Here we use a different decomposition, viz%
\begin{eqnarray*}
\underline{n}^{c}(\zeta \in (2t,2t+\Delta ])&=&\int_{0}^{\infty }\underline{n%
}(\epsilon _{t}\in dy)\mathbb{P}_{y}^{c}(T\in (t,t+\Delta ]) \\
&=&\sum_{1}^{2}I_{r}=\sum_{1}^{2}\int_{A_{r}}\underline{n}_{t}(dy)\mathbb{P}%
_{y}(T\in (t,t+\Delta ]),
\end{eqnarray*}%
where $A_{1}=(0,D^{-1}c(t)],$ and $A_{2}=(D^{-1}c(t),\infty ).$ First we
have, using Corollary \ref{Q} and Theorem \ref{S}, 
\begin{eqnarray*}
I_{1} &=&\int_{0}^{D^{-1}c(t)}\underline{n}_{t}(dy)\mathbb{P}_{y}^{c}(T\in
(t,t+\Delta ]) \\
&\backsim &\frac{d^{\ast }{f(0)}\Delta }{tc(t)}\int_{0}^{D^{-1}c(t)}\underline{n}%
_{t}(dy)U^{\ast }(y) \\
&=&\frac{d^{\ast }{f(0)}\Delta }{tc(t)}\int_{0}^{D^{-1}c(t)}U^{\ast }(dz)%
\underline{n}(\epsilon _{t}\in (z,D^{-1}c(t)] \\
&\backsim &\frac{d^{\ast }{(f(0))^{2}}\Delta }{{(tc(t))^{2}}}\int_{0}^{D^{-1}c(t)}U^{\ast
}(dz)\int_{z}^{D^{-1}c(t)}U(y)dy.
\end{eqnarray*}%
Now, using Lemma \ref{L3} 
\begin{eqnarray*}
\int_{0}^{D^{-1}c(t)}U^{\ast }(dz)\int_{z}^{D^{-1}c(t)}U(y)dy
&=&\int_{0}^{D^{-1}c(t)}U^{\ast }(z)U(z)dz \\
\leq D^{-1}c(t)U(D^{-1}c(t))U^{\ast }(D^{-1}c(t)) &\backsim &CD^{-(1+\alpha
)}tc(t).
\end{eqnarray*}%
So we can make $\lim \sup_{t\rightarrow \infty }\Delta ^{-1}I_{1}tc(t)\leq
\varepsilon $ by choice of $D=D_{\varepsilon }$. The result will then follow
if we can show that $\lim_{D\rightarrow \infty }\lim_{t\rightarrow \infty
}tc(t)(d^{\ast }\Delta )^{-1}I_{2}=f(0).$ Using Theorem \ref{S}, Proposition %
\ref{F}, and the uniform continuity of ${g}(\cdot )$ and ${g}^{\ast }(\cdot ),$
gives 
\begin{eqnarray*}
\frac{tc(t)}{d^{\ast }\Delta }I_{2} &=&\frac{tc(t)\underline{n}(\zeta >t)}{%
d^{\ast }\Delta }\int_{D^{-1}c(t)}^{\infty }\underline{n}(\epsilon _{t}\in
dy|\zeta >t)\mathbb{P}_{y}^{c}(T\in (t,t+\Delta ]) \\
&=&t\overline{n}(\zeta >t)\underline{n}(\zeta >t)\int_{D^{-1}c(t)}^{\infty }%
\underline{n}(\epsilon _{t}\in dy|\zeta >t)({g}^{\ast }(y/c(t))+o(1)) \\
&=&t\overline{n}(\zeta >t)\underline{n}(\zeta >t)\int_{D^{-1}}^{\infty }%
\underline{n}(\epsilon _{t}\in c(t)dz|\zeta >t)({g}^{\ast }(z)+o(1)) \\
&=&\frac{1}{\Gamma (\rho )\Gamma (\overline{\rho })}\int_{D^{-1}}^{\infty
}{g}(z){g}^{\ast }(z)dz+o(1),
\end{eqnarray*}%
where we have used Lemma \ref{L3}$.$ Now since 
\begin{eqnarray*}
{g}(z)dz/\Gamma (\rho ) &=&\underline{n}^{Y}(\epsilon _{1}\in dz|\zeta >1)%
\underline{n}^{Y}(\zeta >1)=q_{1}(z)dz,\text{ and} \\
{g}^{\ast }(z)dz/\Gamma (\overline{\rho }) &=&\overline{n}^{Y}(\epsilon
_{1}\in dz|\zeta >1)\overline{n}^{Y}(\zeta >1)=q_{1}^{\ast }(z)dz,
\end{eqnarray*}%
the result follows from Lemma \ref{lem:29}.
\end{proof}

\begin{remark}
\label{EH} When $d^{\ast }>0$ and $\mathbb{E}H_{1}^{\ast }<\infty $ we see
from (\ref{c}) and (\ref{y}) that%
\begin{eqnarray*}
\underline{n}^{c}(\zeta>t)&\backsim& \frac{f(0)d^{\ast }}{\overline{\rho }%
c(t)}{\ \backsim}\ q\underline{n}(\zeta >t), \\
\text{and }\underline{n}^{d}(\zeta>t)&\backsim& \frac{f(0)A^{\ast }(\infty )%
}{\overline{\rho }c(t)}{\ \backsim}\ p\underline{n}(\zeta >t),
\end{eqnarray*}%
where to get the second estimates we used that the first estimates imply $$\underline{n}(\zeta >t)c(t)\rightarrow f(0)(d^{\ast }+A(\infty ))/\overline{\rho }.$$ Thus we can rewrite (\ref{c}) as 
\begin{equation*}
\lim_{t\rightarrow \infty }\frac{\underline{n}^{c}(t,\Delta ]}{\overline{%
\rho }\Delta \underline{n}(\zeta >t)}=\frac{d^{\ast }}{(d^{\ast }+A^{\ast
}(\infty ))}\text{ },
\end{equation*}%
and since this also holds when $A^{\ast }=\infty ,$ we recover (\ref{1.3x}). 
\end{remark}

\section{Proof of Theorem \protect\ref{thm2} and refinements}

For the case when $X$ is irregular upwards we need
\begin{lemma}
\label{drift}Assume $a^{\ast }>0.$ For $\alpha \overline{\rho }\leq 1,$ we
have that uniformly as $x/c(t)\downarrow 0,$ 
\begin{equation*}
\chi (t,x)\begin{cases}=o(U^{\ast}(x)h_{0}(t)), &\text{if}\ \alpha\overline{\rho}<1,\\\sim \frac{\overline{\rho }}{d^{\ast }+A^{\ast }(\infty )}\frac{\underline{n}(\zeta >t)}{t}{\int_{0}^{x}U^{\ast }(y)\overline{\Pi }^{\ast }(x-y)dy},&\text{if}\ \alpha\overline{\rho}=1, \end{cases}
\end{equation*}%
where the term $\overline{\rho }/(d^{\ast }+A^{\ast }(\infty ))$ is
understood as $o(1)$ when $A^{\ast }(\infty )=\infty .$ Also for any $D>0,$
uniformly in $D^{-1}c(t)<x<Dc(t),$ 
\begin{equation*}
t\chi (t,x)=o(1).
\end{equation*}
\end{lemma}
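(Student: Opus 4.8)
The starting point is the identity $\chi(t,x)=\int_{(0,x)}\overline{\Pi}^{\ast}(x-z)\,\overline{n}_{t}(dz)$, which exhibits $\chi$ as a ``reflected'' analogue of the quantity $\theta$ treated in Propositions~\ref{Z} and \ref{K}: $\underline{n}_{t}$ is replaced by $\overline{n}_{t}$, and the argument $z+y$ of $\overline{\Pi}^{\ast}$ by $x-z$. Accordingly the tools I will call on are Propositions~\ref{prop2} and \ref{prop1}, Corollaries~\ref{Q} and \ref{xx}, and Lemmas~\ref{L},~\ref{L3}, all applied to the dual process $-X$, for which the roles of $(U,\underline{n})$ and $(U^{\ast},\overline{n})$ are interchanged while $f(0)$ is unchanged. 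I use throughout that $a^{\ast}>0$ forces $X$ to be of bounded variation, hence $\int_{0^{+}}\overline{\Pi}^{\ast}(v)\,dv<\infty$, so that the convolution integrals below converge.

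First suppose $x_{t}\to 0$. Then $(0,x)$ lies well below $c(t)$, and Corollary~\ref{Q} applied to $-X$ upgrades (since a bound valid on every subinterval is a bound on the measure) to the domination of $\overline{n}_{t}$ on $(0,x]$ between $\tfrac12\tfrac{f(0)}{tc(t)}U^{\ast}(z)\,dz$ and $2\tfrac{f(0)}{tc(t)}U^{\ast}(z)\,dz$; consequently $\chi(t,x)$ is squeezed between $\tfrac12\tfrac{f(0)}{tc(t)}\int_{0}^{x}U^{\ast}(y)\overline{\Pi}^{\ast}(x-y)\,dy$ and twice that, and the uniform form of Corollary~\ref{Q} lets $\varepsilon\to0$, giving $\chi(t,x)\sim\tfrac{f(0)}{tc(t)}\int_{0}^{x}U^{\ast}(y)\overline{\Pi}^{\ast}(x-y)\,dy$ uniformly as $x_{t}\to0$. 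It remains to rewrite the prefactor. If $\alpha\overline{\rho}<1$, I bound the integral by $U^{\ast}(x)\int_{0}^{x}\overline{\Pi}^{\ast}$, use $h_{0}(t)\sim\overline{\rho}t^{-1}\underline{n}(\zeta>t)$ (Proposition~\ref{Z}) and $U^{\ast}(c(t))\sim k_{1}/\underline{n}(\zeta>t)$ (Lemma~\ref{L3}), and note that the regular variation of $c$, $U^{\ast}$, $\underline{n}(\zeta>\cdot)$ together with $\alpha\overline{\rho}<1$ forces $\tfrac{f(0)}{c(t)}\int_{0}^{x}\overline{\Pi}^{\ast}(v)\,dv=o(\underline{n}(\zeta>t))$, which is the first assertion. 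If $\alpha\overline{\rho}=1$ the task reduces to the identity $\tfrac{f(0)}{c(t)}\sim\tfrac{\overline{\rho}\,\underline{n}(\zeta>t)}{d^{\ast}+A^{\ast}(\infty)}$: when $d^{\ast}+A^{\ast}(\infty)<\infty$ this follows from $\underline{n}(\zeta>t)\sim k_{1}/U^{\ast}(c(t))$, the renewal theorem $U^{\ast}(c(t))\sim c(t)/\mathbb{E}H_{1}^{\ast}$, and $c(t)\underline{n}(\zeta>t)\to f(0)(d^{\ast}+A^{\ast}(\infty))/\overline{\rho}$ (the content of Proposition~\ref{K} and Remark~\ref{EH}); when $A^{\ast}(\infty)=\infty$, the first two relations give $c(t)\underline{n}(\zeta>t)\to\infty$ (here $U^{\ast}(x)=o(x)$ because $H^{\ast}$ is relatively stable of infinite mean), i.e.\ $f(0)/c(t)=o(\underline{n}(\zeta>t))$, matching the stated convention.

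Now suppose $D^{-1}c(t)<x<Dc(t)$. I fix $\delta<1/(2D)$ and split $(0,x)=(0,\delta c(t)]\cup(\delta c(t),x)$. On the first piece $\overline{\Pi}^{\ast}(x-z)\le\overline{\Pi}^{\ast}(c(t)/(2D))$, and since $t$ times the total tail $\overline{\Pi}+\overline{\Pi}^{\ast}$ at $c(t)/(2D)$ stays bounded (it is $RV(-\alpha)$ when $\alpha<2$ and $o(1/t)$ when $\alpha=2$) while $\overline{n}_{t}((0,\delta c(t)])\le\overline{n}(\zeta>t)\to0$, this contributes $o(1)$ to $t\chi(t,x)$. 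On $(\delta c(t),x)$ I use Corollary~\ref{xx} applied to $-X$ to dominate $\overline{n}_{t}$ by $C\overline{n}(\zeta>t)c(t)^{-1}\,\mathrm{Leb}$, giving a contribution to $t\chi(t,x)$ at most $C\,\tfrac{t\overline{n}(\zeta>t)}{c(t)}\int_{0}^{Dc(t)}\overline{\Pi}^{\ast}(v)\,dv$; via $t\overline{n}(\zeta>t)\sim k_{3}/\underline{n}(\zeta>t)$ and $\underline{n}(\zeta>t)\sim k_{1}/U^{\ast}(c(t))$ the regular variation of all the quantities involved makes this vanish, the case $\alpha\overline{\rho}=1$ with $\mathbb{E}H_{1}^{\ast}<\infty$ requiring the sharp behaviour of $\overline{n}_{t}$ near $z=x$ from Proposition~\ref{prop1} in place of the crude bound.

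I expect the main obstacle to be the regime $\alpha\overline{\rho}=1$: there all the relevant prefactors are merely slowly varying, so every estimate must be carried to exact leading order, and one must exploit the precise interrelations between $\underline{n}(\zeta>\cdot)$, $c$ and $\mathbb{E}H_{1}^{\ast}$ (equivalently between $L$, $B$ and $A^{\ast}$) established in Lemmas~\ref{L},~\ref{L3} and Proposition~\ref{K}, rather than mere order-of-magnitude bounds.
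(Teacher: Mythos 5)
Your treatment of the regime $x_{t}\to 0$ is essentially the paper's: you pass from the interval asymptotics of Corollary~\ref{Q} (applied to $-X$) to $\chi(t,x)\sim\frac{f(0)}{tc(t)}\int_{0}^{x}U^{\ast}(y)\overline{\Pi}^{\ast}(x-y)\,dy$, and then identify the prefactor via Lemma~\ref{L3}, the elementary renewal theorem and Remark~\ref{EH} when $\alpha\overline{\rho}=1$, and via an index comparison (the paper's $(1-\alpha)^{+}<1-\alpha\overline{\rho}$, which you leave implicit in ``regular variation forces'') when $\alpha\overline{\rho}<1$. One cosmetic point: Corollary~\ref{Q} controls $\overline{n}_{t}$ on intervals; it does not literally sandwich the \emph{measure} between two densities, since $\overline{n}_{t}$ need not be absolutely continuous. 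The clean route --- the one the paper takes --- is to write $\chi(t,x)=\int_{0}^{\infty}\Pi^{\ast}(dw)\,\overline{n}_{t}\bigl(((x-w)^{+},x)\bigr)$ by Fubini first and only then apply the interval estimate under the $\Pi^{\ast}(dw)$ integral; your argument is easily repaired this way.

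The genuine gap is in the second assertion, in the sub-case $\alpha\overline{\rho}=1$ with $\mathbb{E}H_{1}^{\ast}<\infty$. Your split at $\delta c(t)$ is fine, and the contribution of $(0,\delta c(t)]$ is handled correctly (indeed more explicitly than in the paper). But on $(\delta c(t),x)$ your bound reduces to $t\chi(t,x)\lesssim\frac{t\overline{n}(\zeta>t)}{c(t)}\int_{0}^{Dc(t)}\overline{\Pi}^{\ast}(v)\,dv$, and in this sub-case $\frac{t\overline{n}(\zeta>t)}{c(t)}\sim\frac{k_{3}}{c(t)\underline{n}(\zeta>t)}\to\frac{k_{3}\overline{\rho}}{f(0)\mathbb{E}H_{1}^{\ast}}>0$ while $\int_{0}^{Dc(t)}\overline{\Pi}^{\ast}(v)\,dv$ increases to the finite, positive constant $\int_{0}^{\infty}\overline{\Pi}^{\ast}(v)\,dv$ (finite because $\alpha>1$ at infinity and bounded variation at zero), so the bound is only $O(1)$. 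You flag this and defer to ``the sharp behaviour of $\overline{n}_{t}$ near $z=x$ from Proposition~\ref{prop1}'', but that does not close the gap: the sharp form merely replaces the constant $C$ by $g^{\ast}(z/c(t))$, which is of order one for $z/c(t)\in[D^{-1}-\delta,D]$, and leads to $t\chi(t,x)\to\frac{k_{3}\overline{\rho}}{f(0)\mathbb{E}H_{1}^{\ast}}\,g^{\ast}(x_{t})\int_{0}^{\infty}\overline{\Pi}^{\ast}(v)\,dv$, which is not $o(1)$ unless $\Pi^{\ast}\equiv 0$. So the step you leave unproved is exactly the one that cannot be carried out by the means you name. (To be candid, this is also the thinnest point of the paper's own proof, which disposes of this regime with the single observation that $\int_{0}^{Dc(t)}\overline{\Pi}^{\ast}(v)\,dv=o(c(t))$ --- an estimate that yields $\chi(t,x)=o(\overline{n}(\zeta>t))$ but not $o(1/t)$ when $c(t)\underline{n}(\zeta>t)$ has a finite limit.) Everything else in your proposal tracks the paper's argument.
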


\begin{proof}
First observe that the fact that $a^{\ast}>0$ implies that $X$ is irregular upwards and, by Bertoin's test, see e.g. page 64 in \cite{doneybook}, necessarily $X$ has bounded
variation. A consequence of the bounded variation of $X$ is that 
\begin{equation*}
\int_{\mathbb{R}\setminus \{0\}}1\wedge |w|\Pi (dw)<\infty ,\quad y\overline{%
\Pi }^{\ast }(y)=o(1),\quad \text{as}\ y\rightarrow 0.
\end{equation*}
Making an integration by parts it is easily seen that 
\begin{equation*}
\chi (t,x)=\int_{0}^{\infty }\Pi ^{\ast }(dw)\overline{n}_{t}((x-w)^{+}<%
\epsilon _{t}<x).
\end{equation*}
Assume that $x_{t}\rightarrow 0$ as $t\rightarrow \infty .$ By the usual
approximation method using Lemma \ref{prop21} we have that uniformly in $%
x_{t}\rightarrow 0$ as $t\rightarrow \infty ,$ 
\begin{equation*}
\begin{split}
\chi (t,x)& \sim \frac{f(0)}{tc(t)}\left( \int_{0}^{x}\Pi ^{\ast
}(dw)\int_{(x-w)^{+}}^{x}U^{\ast }(z)dz\right) \\
& =\frac{f(0)}{tc(t)}\int_{0}^{x}U^{\ast }(z)\overline{\Pi }^{\ast }(x-z)dz.
\end{split}%
\end{equation*}%
{When $\alpha\overline{\rho}=1,$} Lemma \ref{L3} and the elementary renewal theorem imply that 
\begin{equation*}
\frac{1}{c(t)\underline{n}(\zeta >t)}\sim \frac{U^{\ast }(c(t))}{c(t)k_{1}}%
\xrightarrow[t\to\infty]{}\frac{1}{k_{1}\mathbb{E}(H_{1}^{\ast })},
\end{equation*}%
where the above is understood as zero when $\mathbb{E}(H_{1}^{\ast })=\infty
.$ Remark \ref{EH} implies that when $\alpha \overline{\rho }=1,$ then the
above limit equals $\overline{\rho }/f(0)\mathbb{E}(H_{1}^{\ast }).$ So the
result follows by equating the constants.

In the case where $\alpha\overline{\rho}<1,$ we can chose $t$ large enough such that $x<c(t)$ and thus we have that 
\begin{equation*}
\begin{split}
 \frac{t}{\underline{n}(\zeta>t)U^{\ast}(x)}\chi (t,x)&\sim\frac{f(0)}{c(t)\underline{n}(\zeta>t)}\frac{1}{U^{\ast}(x)}\int_{0}^{x}U^{\ast }(z)\overline{\Pi }^{\ast }(x-z)dz\\
&\leq\frac{f(0)\int_{0}^{x}\overline{\Pi }^{\ast }(z)dz}{c(t)\underline{n}(\zeta>t)}\\
&\sim C\frac{U^{\ast}(c(t))}{c(t)}\int_{0}^{x}\overline{\Pi }^{\ast }(z)dz\\
&\leq C\frac{\int_{0}^{c(t)}\overline{\Pi }^{\ast }(z)dz}{\int^{c(t)}_{0}\overline{\mu}^{*}(y)dy}\\
&=o(1),
\end{split}
\end{equation*}
in the third line we used Lemma \ref{L3}, in the fourth line we used Proposition~{III.1} in \cite{bertoinbook}, in the fifth line we used that $\int_{0}^{c(t)}\overline{\Pi }^{\ast }(z)dz\in RV((1-\alpha)^{+}/\alpha),$ $\int^{c(t)}_{0}\overline{\mu}^{*}(y)dy\in RV((1-\alpha\overline{\rho})/\alpha)$ and that $(1-\alpha)^{+}<(1-\alpha\overline{\rho})$.

We now deal with the case $D^{-1}c(t)<x<Dc(t).$ As before by the usual
approximation method using Lemma \ref{F} we have that 
\begin{equation*}
\begin{split}
& \chi (t,x)\sim \frac{\overline{n}(\zeta >t)}{c(t)}\int_{0}^{x}dw\overline{%
\Pi }^{\ast }(w)\left( {g^{\ast}}\left( \frac{(x-w)^{+}}{c(t)}\right)
+o(1)\right) \\
& \leq C\frac{\overline{n}(\zeta >t)}{c(t)}\int_{0}^{Dc(t)}dw\overline{\Pi }%
^{\ast }(w).
\end{split}%
\end{equation*}%
Observe that, by Karamata's Theorem, in all cases $\int_{0}^{Dc(t)}dw%
\overline{\Pi }^{\ast }(w)=o(c(t)),$ so the result follows{\color{red}.}
\end{proof}

\subsection{The small deviation case}

\begin{theorem}
\label{E}If $X$ is asymptotically stable with $\alpha \overline{\rho }<1$,
then uniformly in $x>0$ such that $x_{t}:=x/c(t)\rightarrow 0,$%
\begin{equation*}
h_{x}(t)\backsim U^{\ast }(x)h_{0}(t)\backsim p\overline{\rho }U^{\ast }(x)%
\underline{n}(\zeta >t)/t\text{ as }t\rightarrow \infty .
\end{equation*}
\end{theorem}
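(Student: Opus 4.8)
The plan is to work from the identity~(\ref{x}), which, on writing $w=x-y$, reads
\[
h_{x}(t)=\int_{0}^{t}\!\!\int_{w\in[0,x]}\overline{n}_{s}(dw)\,\theta(t-s,x-w)\,ds+a\,\theta(t,x)+a^{\ast}\chi(t,x)=:\Theta(t,x)+a\,\theta(t,x)+a^{\ast}\chi(t,x).
\]
I would show, uniformly in $x>0$ with $x_{t}\to0$, that $\Theta(t,x)\backsim(U^{\ast}(x)-a)h_{0}(t)$, that $a\,\theta(t,x)\backsim a\,h_{0}(t)$, and that $a^{\ast}\chi(t,x)=o(U^{\ast}(x)h_{0}(t))$; adding these gives $h_{x}(t)\backsim U^{\ast}(x)h_{0}(t)$. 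The second equivalence in the statement is then immediate, since Proposition~\ref{Z} with $y=0$ gives $h_{0}(t)=\theta(t,0)\backsim\overline{\rho}\,t^{-1}\underline{n}(\zeta>t)$, and $p=1$ when $\alpha\overline{\rho}<1$.

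The term $a^{\ast}\chi(t,x)$ is disposed of directly by Lemma~\ref{drift} (it is absent when $a^{\ast}=0$); and since $\theta(t,x)\backsim\overline{\rho}\,t^{-1}\underline{n}(\zeta>t)\phi(x_{t})$ with $\phi(x_{t})\to\phi(0)=1$, one has $a\,\theta(t,x)\backsim a\,h_{0}(t)$. For the main term $\Theta(t,x)$, I fix $\delta\in(0,1)$ and split the $s$-integral over $(0,\delta t)$, $(\delta t,(1-\delta)t)$ and $((1-\delta)t,t)$. On $(0,\delta t)$ the argument $t-s\ge(1-\delta)t\to\infty$, so Proposition~\ref{Z} applies to $\theta(t-s,x-w)$; since $(x-w)/c(t-s)\le x/c((1-\delta)t)\to0$ uniformly, $\phi((x-w)/c(t-s))\to1$ uniformly, and combining this with the regular variation of $\underline{n}(\zeta>\cdot)$ yields $\Theta_{(0,\delta t)}\backsim\overline{\rho}\,t^{-1}\underline{n}(\zeta>t)\int_{0}^{\delta t}\overline{n}_{s}((0,x])\,ds$ up to a factor lying between $1$ and $(1-\delta)^{-(1+\overline{\rho})}$. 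By~(\ref{U*}), $\int_{0}^{\infty}\overline{n}_{s}((0,x])\,ds=U^{\ast}(x)-a$, while Corollary~\ref{Q} applied to $-X$ bounds the tail by $\int_{\delta t}^{\infty}\overline{n}_{s}((0,x])\,ds\le 2f(0)\,x\,U^{\ast}(x)\int_{\delta t}^{\infty}(sc(s))^{-1}ds$, which is negligible uniformly as $x_{t}\to0$ because $x/c(\delta t)\to0$. Letting $t\to\infty$ and then $\delta\to0$ gives $\Theta_{(0,\delta t)}\backsim(U^{\ast}(x)-a)h_{0}(t)$.

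It then remains to bound the other two pieces of $\Theta(t,x)$ by $o(U^{\ast}(x)h_{0}(t))$ for each fixed $\delta$. On $(\delta t,(1-\delta)t)$ one uses the monotonicity $\theta(t-s,x-w)\le\theta(t-s,0)=h_{0}(t-s)\le h_{0}(\delta t)$, so this piece is at most $h_{0}(\delta t)\int_{\delta t}^{\infty}\overline{n}_{s}((0,x])\,ds=o(U^{\ast}(x)h_{0}(t))$ by the same tail bound. On $((1-\delta)t,t)$, substitute $u=t-s\in(0,\delta t)$: then $t-u\ge(1-\delta)t$ is large, so Corollary~\ref{Q} for $-X$ gives $\overline{n}_{t-u}(dw)\le\frac{2f(0)}{(1-\delta)t\,c((1-\delta)t)}U^{\ast}(w)\,dw$ on $[0,x]$ for $t$ large, uniformly in $u$; and $\int_{0}^{\delta t}\theta(u,x-w)\,du\le\int_{0}^{\infty}\theta(u,x-w)\,du=\int_{(0,\infty)}U(dz)\,\overline{\Pi}^{\ast}(x-w+z)\le\overline{\mu}^{\ast}(x-w)$ by~(\ref{U}) and Vigon's identity~(\ref{v}). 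A Fubini step then bounds this piece by $\frac{2f(0)}{(1-\delta)t\,c((1-\delta)t)}\int_{0}^{x}U^{\ast}(w)\overline{\mu}^{\ast}(x-w)\,dw\le\frac{2f(0)\,U^{\ast}(x)A^{\ast}(x)}{(1-\delta)t\,c((1-\delta)t)}$, with $A^{\ast}(x)=\int_{0}^{x}\overline{\mu}^{\ast}$. Since $\overline{\mu}^{\ast}\in RV(-\alpha\overline{\rho})$ with $\alpha\overline{\rho}<1$, Karamata gives $A^{\ast}\in RV(1-\alpha\overline{\rho})$, of positive index, so $A^{\ast}(x)=o(A^{\ast}(c(t)))$ whenever $x=o(c(t))$; and $A^{\ast}(c(t))=O(c(t)\overline{\mu}^{\ast}(c(t)))=O(c(t)U(c(t))\overline{\Pi}^{\ast}(c(t)))$ (splitting the Vigon integral at $c(t)$), which by Lemma~\ref{L3}, the estimate $t\overline{\Pi}^{\ast}(c(t))\to k^{\ast}$ and~(\ref{equivtails}) equals $O(c(t)\underline{n}(\zeta>t))$. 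Hence this last piece is $o(U^{\ast}(x)t^{-1}\underline{n}(\zeta>t))=o(U^{\ast}(x)h_{0}(t))$, completing the argument.

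The delicate point is exactly this last piece, where the terminal downward jump occurs only after the preceding excursion has already consumed almost all of the time $t$: the crude estimate $\theta(u,\cdot)\le\overline{\Pi}^{\ast}(\cdot)\underline{n}(\zeta>u)$ is useless there, since $\overline{\Pi}^{\ast}$ may blow up near $0$ and $\underline{n}(\zeta>\cdot)$ is not integrable near $0$; one must instead integrate $\theta$ in time first (producing the finite quantity $\overline{\mu}^{\ast}$ via Vigon), replace $\overline{n}_{s}$ by the renewal-type density of Corollary~\ref{Q}, and finally invoke $A^{\ast}(x)=o(c(t)\underline{n}(\zeta>t))$ for $x=o(c(t))$ — the one place where the hypothesis $\alpha\overline{\rho}<1$ enters essentially.
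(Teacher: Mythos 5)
Your proposal is correct and follows essentially the same route as the paper: Proposition \ref{Z} on the range $s<\delta t$ produces the main term $U^{\ast}(x)h_{0}(t)$, and the remaining range is disposed of by the $U^{\ast}$-density bound for $\overline{n}_{s}$ (Corollary \ref{Q} for $-X$), the identity $\int_{0}^{\infty}\theta(u,y)\,du\leq\overline{\mu}^{\ast}(y)$ via Vigon, and $A^{\ast}(x)=o(c(t)\underline{n}(\zeta>t))$ --- the paper's $I_{2}$ simply handles your middle and last pieces in a single step. The only nit is your appeal to monotonicity of $h_{0}$ on the middle range, which is not established; replace $h_{0}(t-s)\leq h_{0}(\delta t)$ by the monotone majorant $(1+\varepsilon)\overline{\rho}\,u^{-1}\underline{n}(\zeta>u)$ supplied by Proposition \ref{Z}.
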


\begin{remark}
Since $\mathbb{E}H_{1}^{\ast }=\infty $ we know, by Theorem \ref{S} and
remark \ref{EH}, that $\underline{n}^{c}(t,\Delta]=o(\underline{n}%
^{d}(t,\Delta ])$ and $\mathbb{P}_{x}^{c}(T_{0}\in(t,t+\Delta])=o(U^{\ast }(x)\underline{n}%
(\zeta >t)/t),$  and since $p=1$ this will give the result of Theorem \ref%
{thm2} when $\alpha \overline{\rho }<1,$ and also the analogue of (\ref{1.4})%
$.$
\end{remark}

\begin{proof}
Recalling equation (\ref{x}) and Lemma \ref{drift} we can write $%
h_{x}(t)=I_{1}+I_{2}+a\theta (t,x)+{a^{\ast}}o(U^{\ast }(x)h_{0}(t))$ where%
\begin{eqnarray*}
I_{1}+a\mathbb{\theta (}t,x) &=&\int_{0}^{\delta t}ds\int_{0}^{x}\overline{n}%
_{s}(x-dy)\theta (t-s,y)+a\mathbb{\theta (}t,x) \\
&=&\int_{[0,\delta t)}\int_{[0,x]}W^{\ast }(ds,x-dy)\theta (t-s,y) \\
&\backsim &\overline{\rho }\int_{[0,\delta t)}\int_{[0,x]}W^{\ast
}(ds,x-dy)(t-s)^{-1}\underline{n}(\zeta >t-s)\phi (y/c(t-s)),
\end{eqnarray*}%
uniformly in $x,$ by Proposition \ref{Z}$.$ Since $\phi \leq 1$ {and it is a non-increasing function} we can bound
the latter {from} above by%
\begin{equation*}
\frac{\overline{\rho }\underline{n}(\zeta >t(1-\delta ))}{t(1-\delta )}%
\int_{[0,\delta t)}\int_{[0,x]}W^{\ast }(ds,x-dy)\leq \frac{\overline{\rho }%
\underline{n}(\zeta >t(1-\delta ))U^{\ast }(x)}{t(1-\delta )},
\end{equation*}%
and below by 
\begin{eqnarray*}
&&\frac{\overline{\rho }\underline{n}(\zeta >t)\phi (x/c(t))}{t}%
\int_{[0,\delta t)}\int_{[0,x]}W^{\ast }(ds,x-dy) \\
&\geq &\frac{(1-\varepsilon )\overline{\rho }\underline{n}(\zeta >t)}{t}%
\left( U^{\ast }(x)-\int_{\delta t}^{\infty }\int_{[0,x]}W^{\ast
}(ds,x-dy)\right)
\end{eqnarray*}%
for arbitrary $\varepsilon >0$ and all sufficiently large $t.$ Also, using
the result corresponding to Proposition \ref{prop2} for $-X$ 
\begin{eqnarray*}
\int_{\delta t}^{\infty }\int_{0}^{x}W^{\ast }(ds,x-dy) &=&\int_{\delta
t}^{\infty }ds\int_{0}^{x}\overline{n}_{s}(dy) \\
&\leq &C\int_{\delta t}^{\infty }ds\int_{0}^{x}U^{\ast }(y)dy/sc(s) \\
&\leq &CxU^{\ast }(x)/c({\delta}t)=o(U^{\ast }(x)),
\end{eqnarray*}%
and we conclude that 
\begin{equation*}
I_{1}+a\theta (t,x)\overset{t,\delta }{\backsim }h_{0}(t)U^{\ast }(x).
\end{equation*}%
Also, we can write $\theta (t,y)=\int_{y}^{\infty }\nu (t,dw)$ where $\nu
(t,dw)=\int_{0}^{\infty }\underline{n}_{t}(dz)\Pi ^{\ast }(dw+z).$ This
allows us to integrate $\int_{0}^{x}\overline{n}_{t-s}(dy)\theta (s,x-y)$ by
parts and apply the result for $-X$ corresponding to Corollary \ref{Q}, to
get 
\begin{eqnarray*}
I_{2} &=&\int_{0}^{(1-\delta )t}ds\int_{0}^{x}\overline{n}_{t-s}(dy)\theta
(s,x-y) \\
&\leq &\frac{C}{tc(t)}\int_{0}^{(1-\delta )t}ds\int_{0}^{x}U^{\ast
}(y)\theta (s,x-y)dy \\
&\leq &\frac{C}{tc(t)}\int_{0}^{x}U^{\ast }(y)\underline{n}\{O>x-y\}{dy} \\
&\leq &\frac{CU^{\ast }(x)A^{\ast }(x)}{tc(t)}
\end{eqnarray*}%
where we recall that $A^{\ast }(x)=\int_{0}^{x}\overline{\mu }^{\ast }(y)dy,$
$\overline{\mu }^{\ast }(y)=\underline{n}(O>y)$ is the tail of the L\'{e}vy
measure of the decreasing ladder-height process, and $U^{\ast }(x)\backsim
x/A^{\ast }(x)$ as $x\rightarrow \infty $. Since $A^{\ast }\in RV(1-\alpha 
\overline{\rho })$ we have 
\begin{eqnarray*}
A^{\ast }(x)/c(t)\underline{n}(\zeta >t)&=&o(A^{\ast }(c(t))/c(t)\underline{n%
}(\zeta >t) \\
&=&o(1/U^{\ast }(c(t))\underline{n}(\zeta >t)),
\end{eqnarray*}%
and the result follows from Lemma \ref{L3}.
\end{proof}

\begin{theorem}
\label{I}If $X$ is asymptotically stable with $\alpha \overline{\rho }=1,$
the conclusion of Theorem \ref{E} holds.
\end{theorem}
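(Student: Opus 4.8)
The plan is to follow the proof of Theorem \ref{E} line by line, substituting Proposition \ref{K} for Proposition \ref{Z} and the second branch of Lemma \ref{drift} for the first. One again starts from the decomposition \eqref{x}, $h_x(t)=I_1+I_2+a\theta(t,x)+a^{\ast}\chi(t,x)$, recalling that at most one of $a,a^{\ast}$ is positive and that $a^{\ast}>0$ forces $a=0$ and $X$ of bounded variation. The orders of magnitude to keep track of are: by Proposition \ref{K}, $\theta(t,y)=h_0(t)\psi(y,t)/L(t)+o(t^{-1}\underline{n}(\zeta>t))$ uniformly in $y\geq0$, with $\psi(\cdot,t)$ non-increasing and $0\leq\psi(\cdot,t)\leq\psi(0,t)=L(t)$; moreover $h_0(t)\sim\overline{\rho}\,t^{-1}\underline{n}^{d}(\zeta>t)$, and by the results of the previous section $\underline{n}^{d}(\zeta>t)\sim p\,\underline{n}(\zeta>t)$, where $p>0$ in the discontinuous case since $A^{\ast}(\infty)>0$ whenever $\Pi(\mathbb{R}^{-})>0$; hence the error term above is $o(h_0(t))$. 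Note also that $\alpha\overline{\rho}=1$ with $\overline{\rho}<1$ means $\alpha>1$ (the case $\alpha=1$, forcing $\rho=0$, being degenerate and excluded by our standing hypotheses), so that $k^{\ast}=0$, i.e. $\overline{\Pi}^{\ast}(z)=o(z^{-\alpha})$, gives both $\pi_1:=\int_0^{\infty}\overline{\Pi}^{\ast}(z)\,dz<\infty$ and $\int_z^{\infty}\overline{\Pi}^{\ast}(w)\,dw=o(z^{1-\alpha})$.

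The main contribution is $I_1+a\theta(t,x)=\int_{[0,\delta t)}\int_{[0,x]}W^{\ast}(ds,x-dy)\theta(t-s,y)$. Since $\psi(\cdot,t)\leq L(t)$ one has $\theta(t-s,y)\leq h_0(t-s)(1+o(1))\leq h_0(t(1-\delta))(1+o(1))$ uniformly, and, the $W^{\ast}$-mass of $[0,\delta t)\times[0,x]$ being $\leq U^{\ast}(x)$, this gives $\limsup_{t}(I_1+a\theta(t,x))/(h_0(t)U^{\ast}(x))\leq(1-\delta)^{-(1+\eta)}$. The delicate point is the matching lower bound. Here the factor $\psi(y,t)/L(t)$ plays the role of $\phi(y/c(t))$ in Theorem \ref{E}, but — unlike $\phi$ — it need \emph{not} converge to $1$ uniformly for $y=o(c(t))$, so one cannot recycle the monotonicity-plus-continuity argument and must instead analyse $\int_{[0,\delta t)}\int_{[0,x]}W^{\ast}(ds,x-dy)\psi(y,t-s)$ directly. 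Using Fubini, Vigon's identity $\overline{\mu}^{\ast}(z)=\int_{[0,\infty)}U(d\zeta)\overline{\Pi}^{\ast}(z+\zeta)$, Erickson's bounds on $U$ and $U^{\ast}$, the slow variation of $B=L$, and the estimate $z\overline{\Pi}^{\ast}(z)U(z)=o(A^{\ast}(z))$ from the proof of Lemma \ref{L}, one evaluates this weighted integral; together with $\int_{[\delta t,\infty)}\int_{[0,x]}W^{\ast}(ds,x-dy)=o(U^{\ast}(x))$ (from the analogue of Proposition \ref{prop2} for $-X$, exactly as in Theorem \ref{E}) it yields the asymptotic of $I_1+a\theta(t,x)$. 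When $a^{\ast}=0$ this already gives $I_1+a\theta(t,x)\overset{t,\delta}{\backsim}h_0(t)U^{\ast}(x)$; when $a^{\ast}>0$ there is a shortfall, which is exactly supplied by the creeping term below.

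The remaining terms are handled as in Theorem \ref{E}. For $I_2=\int_0^{(1-\delta)t}ds\int_0^x\overline{n}_{t-s}(dy)\theta(s,x-y)$ one integrates by parts, applies the $-X$ analogue of Corollary \ref{Q}, and obtains $I_2\leq C\,U^{\ast}(x)A^{\ast}(x)/(tc(t))=o(U^{\ast}(x)h_0(t))$ by Lemma \ref{L3}. For $a^{\ast}\chi(t,x)$ one uses the second branch of Lemma \ref{drift}: if $A^{\ast}(\infty)=\infty$ it is $o\bigl(t^{-1}\underline{n}(\zeta>t)\int_0^xU^{\ast}(y)\overline{\Pi}^{\ast}(x-y)\,dy\bigr)=o(U^{\ast}(x)h_0(t))$, using $\int_0^xU^{\ast}(y)\overline{\Pi}^{\ast}(x-y)\,dy\leq\pi_1U^{\ast}(x)$; if $A^{\ast}(\infty)<\infty$ then $a^{\ast}\chi(t,x)\sim a^{\ast}f(0)(tc(t))^{-1}\int_0^xU^{\ast}(y)\overline{\Pi}^{\ast}(x-y)\,dy\sim a^{\ast}f(0)\pi_1U^{\ast}(x)/(tc(t))$ by dominated convergence ($U^{\ast}\in RV(1)$), which is precisely the amount missing from $I_1+a\theta(t,x)$ in that sub-case. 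Putting the pieces together, $h_x(t)\backsim U^{\ast}(x)h_0(t)$, and then $h_0(t)\sim p\overline{\rho}\,t^{-1}\underline{n}(\zeta>t)$ gives the second stated asymptotic.

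The hard part is the lower estimate for $I_1+a\theta(t,x)$: because $\psi(y,t)/L(t)$ fails to tend to $1$ uniformly on the relevant scale $y=o(c(t))$, the straightforward imitation of Theorem \ref{E} breaks down, and one has to carry through the weighted computation above, in which the slow variation of $L$, the regular variation of $U,U^{\ast}$, and the light-left-tail condition $k^{\ast}=0$ all intervene; pinning down the constant when $\mathbb{E}H_1^{\ast}<\infty$, so that it dovetails with the creeping contribution $a^{\ast}\chi$, is the finicky step.
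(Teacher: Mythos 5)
The proposal breaks down at the point where you dispose of $I_{2}=\int_{0}^{(1-\delta )t}ds\int_{0}^{x}\overline{n}_{t-s}(dy)\theta (s,x-y)$, i.e.\ the part of the convolution in (\ref{x}) with ladder epoch $u=t-s\in (\delta t,t]$. You bound it, exactly as in Theorem \ref{E}, by $CU^{\ast }(x)A^{\ast }(x)/(tc(t))$ and then assert this is $o(U^{\ast }(x)h_{0}(t))$ ``by Lemma \ref{L3}''. That step needs $A^{\ast }(x)=o(c(t)\underline{n}(\zeta >t))$, which in Theorem \ref{E} came from $A^{\ast }\in RV(1-\alpha \overline{\rho })$ with \emph{strictly positive} index together with $x=o(c(t))$. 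When $\alpha \overline{\rho }=1$ that index is $0$: $A^{\ast }$ is slowly varying (or tends to the finite limit $A^{\ast }(\infty )$), while $h_{0}(t)\backsim f(0)L(t)/(tc(t))$ with $L(t)\backsim A^{\ast }(c(t))$ by Lemma \ref{L}; equivalently $c(t)\underline{n}(\zeta >t)$ converges to a finite constant when $\mathbb{E}H_{1}^{\ast }<\infty $. Hence $A^{\ast }(x)/(c(t)\underline{n}(\zeta >t))$ does \emph{not} tend to $0$ uniformly over $x=o(c(t))$ (it has a positive limit as soon as $x\rightarrow \infty $, however slowly, when $A^{\ast }(\infty )<\infty $). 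The term is genuinely of the same order as the main one: the paper isolates the range $u\in ((1-\delta )t,t]$ (its $I_{3}$) and computes $I_{3}+a^{\ast }\chi (t,x)\backsim p\gamma (t)K(x)$ with $K(x)=\int_{0}^{x}\overline{\mu }^{\ast }(y)U^{\ast }(x-y)dy\asymp U^{\ast }(x)A^{\ast }(x)$ and $\gamma (t)=\overline{\rho }\underline{n}(\zeta >t)/(tL(t))$; only the middle range $u\in (\delta t,(1-\delta )t)$ is negligible.

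Correspondingly your account of $I_{1}$ cannot be right either: the weighted integral you propose to evaluate is asymptotic to $\gamma (t)J(t,x)$ where, by Vigon's identity, $J(t,x)=U^{\ast }(x)L(t)-K(x)+o(U^{\ast }(x)L(t))$ (the error controlled via $E(x+\Delta _{t})=o(A^{\ast }(x+\Delta _{t}))$ from Lemma \ref{L}). So $I_{1}$ falls short of $h_{0}(t)U^{\ast }(x)$ by an amount of order $\gamma (t)K(x)$ \emph{even when} $a^{\ast }=0$, and the shortfall is made up by $I_{3}$, not by the creeping term. The role of $a^{\ast }\chi (t,x)$ is only to restore the piece $a^{\ast }\int_{0}^{x}\overline{\Pi }^{\ast }(y)U^{\ast }(x-y)dy$ lost when one writes $\int_{0}^{\infty }\theta (s,y)ds=\overline{\mu }^{\ast }(y)-a^{\ast }\overline{\Pi }^{\ast }(y)$ inside $I_{3}$. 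The identity $J(t,x)+K(x)=U^{\ast }(x)L(t)+o(U^{\ast }(x)L(t))$ — the cancellation between the early-ladder-epoch and late-ladder-epoch contributions — is the heart of the paper's proof and is missing from the proposal; without it the claimed asymptotics do not follow.
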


\begin{proof}
This time we write $h_{x}(t)=I_{1}+I_{2}+I_{3}+a\theta (t,x)+a^{\ast }\chi
(t,x)$ where%
\begin{eqnarray*}
I_{1}+a\theta (t,x) &=&\int_{0}^{\delta t}ds\int_{0}^{x}\overline{n}%
_{s}(x-dy)\theta (t-s,y)+a\theta (t,x) \\
&=&\int_{0}^{\delta t}\int_{(0,x]}W^{\ast }(ds,x-dy)\theta (t-s,y).
\end{eqnarray*}%
Since $\int_{0}^{\delta t}\int_{(0,x]}W^{\ast }(ds,x-dy)\leq U^{\ast }(x)$
we see from Proposition \ref{K} that, writing $\Delta _{t}=\delta _{t}c(t)$
and introducing the monotone decreasing function $\gamma (t)=\overline{\rho }%
\underline{n}(\zeta >t)/(tL(t)),$%
\begin{equation*}
I_{1}=\int_{0}^{\delta t}\int_{(0,x]}\int_{z=0}^{\Delta _{t}}W^{\ast
}(ds,x-dy)\gamma (t-s)U(z)\overline{\Pi }^{\ast }(z+y)dz+o(U^{\ast }(x)%
\underline{n}(\zeta >t)/t).
\end{equation*}%
The integral here is bounded above by $\gamma ((1-\delta )t)J(t,x)$ and
below by $\gamma (t)(J(t,x)-e(t,x))$, where%
\begin{eqnarray*}
J(t,x) &=&\int_{0<y\leq x}\int_{z=0}^{\Delta _{t}}U^{\ast }(x-dy)U(z)%
\overline{\Pi }^{\ast }(z+y)dz, \\
e(t,x) &=&\int_{\delta t}^{\infty }\int_{0<y\leq x}\int_{z=0}^{\Delta _{t}}%
\overline{n}_{s}(x-dy)U(z)\overline{\Pi }^{\ast }(z+y){dzds}.
\end{eqnarray*}%
Note that%
\begin{eqnarray*}
e(t,x) &\leq &\int_{\delta t}^{\infty }\int_{0<y\leq x}\int_{z=0}^{\Delta
_{t}}\overline{n}_{s}(x-dy)U(z)\overline{\Pi }^{\ast }(z){dzds} \\
&=&L(t)\int_{\delta t}^{\infty }\overline{n}_{s}((0,x])ds\backsim
L(t)f(0)\int_{0}^{x}U^{\ast }(y)dy\int_{\delta t}^{\infty }\frac{ds}{sc(s)}
\\
&\backsim &\frac{\alpha L(t)f(0)}{\delta ^{\eta }c(t)}\int_{0}^{x}U^{\ast
}(y)dy\leq \frac{\alpha f(0)}{\delta ^{\eta }}\frac{xU^{\ast }(x)L(t)}{c(t)}%
=o(U^{\ast }(x)\underline{n}(\zeta >t)/t),
\end{eqnarray*}%
where we have used Corollary \ref{Q} in the second line. Also 
\begin{eqnarray}
&&J(t,x)\\&=&\int_{z=0}^{\Delta _{t}}U(z)\int_{0}^{x}U^{\ast }(x-dy)\overline{%
\Pi }^{\ast }(z+y)dz  \notag \\
&=&\int_{z=0}^{\Delta _{t}}U(z)dz\left( U^{\ast }(x)\overline{\Pi }^{\ast
}(z)-\int_{0}^{x}U^{\ast }(x-y)\Pi ^{\ast }(z+dy)\right)  \notag \\
&=&U^{\ast }(x)L(t)-\int_{z=0}^{\Delta _{t}}U(z)dz\int_{z}^{z+x}U^{\ast
}(x+z-w)\Pi ^{\ast }(dw)  \notag \\
&=&U^{\ast }(x)L(t)-\int_{w=0}^{\Delta _{t}+x}\Pi ^{\ast
}(dw)\int_{(w-x)^{+}}^{w}U^{\ast }(x+z-w)U(z)dz  \notag \\
&=&U^{\ast }(x)L(t)-\int_{w=0}^{\Delta _{t}+x}\Pi ^{\ast
}(dw)\int_{(w-\Delta _{t})^{+}}^{x\wedge w}U^{\ast }(x-y)U(w-y)dy.
\label{2a}
\end{eqnarray}%
Also, using Proposition \ref{prop2} and the usual approximation argument, we
see that%
\begin{eqnarray*}
I_{3} &=&\int_{0}^{\delta t}ds\int_{0}^{x}\theta (s,y)\overline{n}%
_{t-s}(x-dy) \\
&\backsim &\int_{0}^{\delta t}\int_{0}^{x}\frac{f(0)\theta (s,y)}{(t-s)c(t-s)%
}U^{\ast }(x-y)dyds \\
&\leq &\frac{f(0)}{(1-\delta )tc((1-\delta )t)}\int_{0}^{x}\int_{0}^{\infty
}\theta (s,y)U^{\ast }(x-y)dyds.
\end{eqnarray*}%
Since 
\begin{eqnarray*}
\int_{0}^{\infty }\theta (s,y)ds &=&\int_{0}^{\infty }\int_{0}^{\infty }%
\underline{n}_{s}(dz)\overline{\Pi }^{\ast }(y+z)ds \\
&=&\int_{0}^{\infty }U(dz)\overline{\Pi }^{\ast }(y+z)-a^{\ast }\overline{%
\Pi }^{\ast }(y)=\overline{\mu }^{\ast }(y)-a^{\ast }\overline{\Pi }^{\ast
}(y),
\end{eqnarray*}%
we get that the double integral above {equals $$\int_{0}^{x}\overline{\mu }^{\ast
}(y)U^{\ast }(x-y)dy-a^{\ast }\int_{0}^{x}dy\overline{\Pi }^{\ast }(y)U^{\ast
}(x-y).$$} Noting that {$\int_{\delta t}^{\infty }\theta (s,y)ds\leq \underline{n%
}^{d}(\zeta >\delta t)$} and so
\begin{eqnarray*}
&&\frac{1}{tc(t)}\int_{0}^{x}U^{\ast }(x-y)dy\int_{\delta t}^{\infty }\theta
(s,y)ds\leq \frac{\underline{n}(\zeta >\delta t)}{tc(t)}{\int_{0}^{x}U^{\ast
}(x-y)dy} \\
&\leq &\frac{xU^{\ast }(x)\underline{n}(\zeta >\delta t)}{tc(t)}%
=o(t^{-1}U^{\ast }(x)\underline{n}(\zeta >t)),
\end{eqnarray*}%
and recalling that $f(0)/tc(t)\backsim \overline{\rho }\underline{n}%
^{d}(\zeta >t)/tL(t)=p\gamma (t)$, we see that there is a corresponding
lower bound and hence, from Lemma \ref{drift}, 
\begin{equation}
\lim_{\delta \rightarrow 0,t\rightarrow \infty }\frac{I_{3}+a^{\ast }\chi
(t,x)}{\gamma (t)K(x)}=p,\text{ where }K(x)=\int_{0}^{x}\overline{\mu }%
^{\ast }(y)U^{\ast }(x-y)dy.  \label{3a}
\end{equation}%
On the other hand, using Vigon's expression for $\overline{\mu }^{\ast }$ we
see that 
\begin{eqnarray*}
K(x) &=&\int_{0}^{x}\int_{0}^{\infty }\Pi ^{\ast }(y+dv)U(v)U^{\ast }(x-y)dy
\\
&=&\int_{0}^{\infty }\Pi ^{\ast }(du)\int_{0}^{x\wedge u}U(u-y)U^{\ast
}(x-y)dy,
\end{eqnarray*}%
and hence 
\begin{eqnarray*}
J(t,x)+K(x)-U^{\ast }(x)L(t) &=&\int_{x+\Delta _{t}}^{\infty }\Pi ^{\ast
}(du)\int_{0}^{x\wedge u}U(u-y)U^{\ast }(x-y)dy \\
&\leq &U^{\ast }(x)\int_{x+\Delta _{t}}^{\infty }\Pi ^{\ast
}(du)\int_{0}^{(x+\Delta _{t})}U(u-y)dy \\
&=&U^{\ast }(x)E(x+\Delta _{t})=o(U^{\ast }(x)A^{\ast }(x+\Delta _{t})),
\end{eqnarray*}%
by Lemma \ref{L}. But for large $t$ we have $\Delta _{t}\leq x+\Delta
_{t}\leq c(t),$ so $A^{\ast }(x+\Delta _{t})\backsim L(t).$ Then it follows
from (\ref{2a}) and (\ref{3a}) that, uniformly in $x,$ 
\begin{equation*}
\lim_{\delta \rightarrow 0,t\rightarrow \infty }\frac{t(I_{1}+I_{3}{+a^{\ast}\chi(t,x)})}{%
\overline{\rho }U^{\ast }(x)\underline{n}(\zeta >t)}=p.
\end{equation*}%
It is also straight forward to check that, for any fixed $\delta \in
(0,1/2), $ $I_{2}=o(t^{-1}U^{\ast }(x)\underline{n}(\zeta >t)),$ and the
result follows.
\end{proof}

\subsection{Normal deviations}

Again we start with a preparatory result.

\begin{lemma}
\label{o}If $\alpha \overline{\rho }<1,$ the identity 
\begin{equation}
\widetilde{h}_{x}(1)=\frac{\overline{\rho }}{\Gamma (\overline{\rho })\Gamma
(\rho )}\int_{0}^{1}ds\int_{0}^{x}dy\phi \left( (x-y)(1-s)^{-\eta }\right)
)(1-s)^{-\overline{\rho }-1}{g}^{\ast }\left( ys^{-\eta }\right) s^{-\rho
-\eta },  \label{h}
\end{equation}%
holds for $x>0$, where $\phi $ is defined in Proposition \ref{Z} and $\tilde{%
h}_{x}$ is the downwards first passage density for $Y$ starting from $x>0.$
\end{lemma}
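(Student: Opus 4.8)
The plan is to specialise to the stable process $Y$ itself all the ingredients already assembled for the discontinuous case: the representation $(\ref{x})$ of the first passage density through the functional $\theta$, the scaling property $(\ref{ss})$ of the stable excursion measure $\underline n^Y$ together with its dual for $\overline n^Y$, and the identity $(\ref{22})$. First I would check that the machinery applies to $Y$. Since neither $Y$ nor $-Y$ is a subordinator, $\rho$ and $\overline\rho$ both lie in $(0,1)$, so the upgoing and downgoing ladder time processes of $Y$ are stable subordinators of index $\rho<1$ and $\overline\rho<1$, hence have zero drift; and since $\alpha\overline\rho<1$ the downgoing ladder height of $Y$ is a stable subordinator of index $\alpha\overline\rho<1$, so $Y$ does not creep downwards and, starting from $x>0$, the density $\widetilde h_x$ of $T_0$ coincides with $h^Y_x$, the $Y$-analogue of the density in Lemma $\ref{lem1}$, with the stable Lévy-tail identity $\overline\Pi^{Y,\ast}(y)=k^\ast y^{-\alpha}$ in which $k^\ast$ is the constant of $(\ref{22})$.

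Hence $(\ref{x})$ applied to $Y$, with both ladder-time drifts zero, reads
\begin{equation*}
\widetilde h_x(1)=\int_{0}^{1}ds\int_{0}^{x}\overline n^Y_s(x-dy)\,\theta^Y(1-s,y),\qquad \theta^Y(u,y)=k^\ast\!\int_{0}^{\infty}\underline n^Y_u(dz)\,(y+z)^{-\alpha}.
\end{equation*}
Next I would insert the scaling. Applying $(\ref{ss})$ with $c=u$ (and with its ``$s$'' equal to $u$), and using $\underline n^Y(\epsilon_1\in dz\mid\zeta>1)={g}(z)\,dz$ together with the normalisation $\underline n^Y(\zeta>1)=1/\Gamma(\rho)$ (and dually $\overline n^Y(\zeta>1)=1/\Gamma(\overline\rho)$) recorded in the proof of Theorem $\ref{R}$, gives $\underline n^Y_u(dz)=\Gamma(\rho)^{-1}u^{-\overline\rho-\eta}{g}(u^{-\eta}z)\,dz$ and $\overline n^Y_s(dz)=\Gamma(\overline\rho)^{-1}s^{-\rho-\eta}{g}^\ast(s^{-\eta}z)\,dz$.

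Feeding the first of these into $\theta^Y$, changing variables $z=u^{\eta}v$, extracting the factor $u^{-\alpha\eta}=u^{-1}$ from $(y+u^{\eta}v)^{-\alpha}$, and using the definition $(\ref{2})$ of $\phi$ and $(\ref{22})$ in the form $\int_0^\infty v^{-\alpha}{g}(v)\,dv=\mathbb E Z_1^{-\alpha}=\overline\rho/k^\ast$, one obtains
\begin{equation*}
\theta^Y(u,y)=\frac{k^\ast}{\Gamma(\rho)}\,u^{-(\overline\rho+1)}\int_0^\infty {g}(v)\,(v+yu^{-\eta})^{-\alpha}\,dv=\frac{\overline\rho}{\Gamma(\rho)}\,u^{-(\overline\rho+1)}\,\phi\!\left(yu^{-\eta}\right).
\end{equation*}
Substituting this expression and the scaled form of $\overline n^Y_s(x-dy)$ into the display for $\widetilde h_x(1)$, the two occurrences of $k^\ast$ cancel, and the change of variable $y\mapsto x-y$ in the remaining integral turns the result into exactly $(\ref{h})$. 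Every interchange of integrals is legitimate by Tonelli's theorem, since all integrands are nonnegative.

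I do not expect a genuine obstacle here: once $(\ref{x})$ for $Y$ is taken as the starting point, the argument is bookkeeping. The two places that need care are the scaling exponents --- verifying that $(\ref{ss})$ and the relation $\alpha\eta=1$ produce precisely the powers $s^{-\rho-\eta}$ and $(1-s)^{-\overline\rho-1}$ --- and the matching of multiplicative constants through $\underline n^Y(\zeta>1)=1/\Gamma(\rho)$, $\overline n^Y(\zeta>1)=1/\Gamma(\overline\rho)$ and $(\ref{22})$; the function $\phi$ then appears inevitably from the integral $\int_0^\infty {g}(v)(v+b)^{-\alpha}\,dv$.
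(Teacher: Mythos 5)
Your proposal is correct and follows essentially the same route as the paper: specialise the identity (\ref{x}) to the stable process $Y$ at $t=1$ (with both ladder-time drifts zero), insert the scaling relations for $\underline{n}^Y_s$ and $\overline{n}^Y_s$ together with the normalisations $\underline{n}^Y(\zeta>1)=1/\Gamma(\rho)$, $\overline{n}^Y(\zeta>1)=1/\Gamma(\overline{\rho})$, and identify $\phi$ via (\ref{22}) so that the constants $k^{\ast}$ cancel. Your explicit checks that the stable ladder-time drifts vanish and that $Y$ does not creep downwards (so $\widetilde h_x$ coincides with the density from Lemma \ref{lem1}) are points the paper leaves implicit, but the computation is the same.
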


\begin{proof}
Recall that $\phi (z)=\mathbb{E}(z+Z_{1})^{-\alpha }/\mathbb{E}%
Z_{1}^{-\alpha }=k^{\ast }\mathbb{E}(z+Z_{1})^{-\alpha }/\overline{\rho },$
where we have used (\ref{22}). Also the left-hand tail of the L\'{e}vy
measure of $Y$ is $k^{\ast }x^{-\alpha },$ so if we write the equation (\ref%
{x}) for $Y$ with $t=1$ we have%
\begin{eqnarray*}
\widetilde{h}_{x}(1) &=&k^{\ast }\int_{0}^{1}ds\int_{0}^{x}\overline{n}%
_{s}^{Y}(dy)\underline{n}^{Y}((x-y+\epsilon _{1-s})^{-\alpha },\zeta >1-s) \\
&=&k^{\ast }\int_{0}^{1}ds\int_{0}^{x}\int_{0}^{\infty }q_{s}^{\ast
}(y)(x-y+z)^{-\alpha }q_{1-s}(z)dydz.
\end{eqnarray*}%
Using (\ref{23}) and its analogue for $q^{\ast },$ and recalling that $%
\underline{n}^{Y}(\zeta >1)\overline{n}^{Y}(\zeta >1)=(\Gamma (\overline{%
\rho })\Gamma (\rho ))^{-1}$ the RHS becomes%
\begin{eqnarray*}
&&\frac{k^{\ast }}{\Gamma (\overline{\rho })\Gamma (\rho )}%
\int_{0}^{1}ds\int_{0}^{x}\int_{0}^{\infty }(x-y+z)^{-\alpha }s^{-\eta -\rho
}{g}^{\ast }(ys^{-\eta })(1-s)^{-\eta -\overline{\rho }}{g}(z(1-s)^{-\eta })dydz
\\
&=&\frac{k^{\ast }}{\Gamma (\overline{\rho })\Gamma (\rho )}%
\int_{0}^{1}ds\int_{0}^{x}\int_{0}^{\infty }(x-y+w(1-s)^{\eta })^{-\alpha
}s^{-\eta -\rho }{g}^{\ast }(ys^{-\eta })(1-s)^{-\overline{\rho }}{g}(w)dydw \\
&=&\frac{k^{\ast }}{\Gamma (\overline{\rho })\Gamma (\rho )}%
\int_{0}^{1}ds\int_{0}^{x}\int_{0}^{\infty }((x-y)(1-s)^{-\eta }+w)^{-\alpha
}s^{-\eta -\rho }{g}^{\ast }(ys^{-\eta })(1-s)^{-1-\overline{\rho }}{g}(w)dydw \\
&=&\frac{\overline{\rho }}{\Gamma (\overline{\rho })\Gamma (\rho )}%
\int_{0}^{1}ds\int_{0}^{x}\phi ((x-y)(1-s)^{-\eta })s^{-\eta -\rho }{g}^{\ast
}(ys^{-\eta })(1-s)^{-1-\overline{\rho }}dy,
\end{eqnarray*}%
and the result follows.
\end{proof}

\begin{theorem}
\label{G}Assume $\alpha \overline{\rho }<1.$ Then uniformly for $x_{t}\in
\lbrack D^{-1},D],$%
\begin{equation*}
th_{x}(t)=p\tilde{h}_{x_{t}}(1)+o(1)\text{ as }t\rightarrow \infty .
\end{equation*}
\end{theorem}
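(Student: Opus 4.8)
The plan is to work from the identity (\ref{x}), which we may write as
\[
h_{x}(t)=I(t,x)+a\theta(t,x)+a^{\ast}\chi(t,x),\qquad I(t,x)=\int_{0}^{t}\!ds\int_{0}^{x}\overline{n}_{s}(x-dy)\,\theta(t-s,y),
\]
multiply by $t$, and handle the three terms separately. Since $\alpha\overline{\rho}<1$ forces $\mathbb{E}H_{1}^{\ast}=\infty$, we have $q=0$ and $p=1$, so the target is $th_{x}(t)\to\tilde{h}_{x_{t}}(1)$. The boundary terms are negligible: by Proposition \ref{Z}, $ta\theta(t,x)\sim a\overline{\rho}\,\underline{n}(\zeta>t)\phi(x_{t})\to0$ because $\underline{n}(\zeta>t)\to0$ and $\phi\le1$; and by the last assertion of Lemma \ref{drift}, $ta^{\ast}\chi(t,x)=o(1)$ uniformly for $x_{t}\in[D^{-1},D]$ (at most one of these two terms is present anyway). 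So everything reduces to the behaviour of $tI(t,x)$.

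For $tI(t,x)$ I would cut the $s$-integral at $\delta t$ and $(1-\delta)t$. On the central block $s\in(\delta t,(1-\delta)t)$ both $s$ and $t-s$ tend to infinity, so Proposition \ref{prop1} applied to $-X$ describes $\overline{n}_{s}(x-dy)$ and Proposition \ref{Z} describes $\theta(t-s,y)$; after the substitution $s=rt$, $y=\xi c(t)$, and using the regular variation of $c,\underline{n}(\zeta>\cdot),\overline{n}(\zeta>\cdot)$ together with (\ref{equivtails}), the rescaled integrand converges, uniformly in $x_{t}\in[D^{-1},D]$, to
\[
\frac{\overline{\rho}}{\Gamma(\rho)\Gamma(\overline{\rho})}\,r^{-\rho-\eta}(1-r)^{-\overline{\rho}-1}g^{\ast}\!\big((x_{t}-\xi)r^{-\eta}\big)\,\phi\!\big(\xi(1-r)^{-\eta}\big),
\]
and dominated convergence (the integrand is bounded by a constant on the bounded region $(\delta,1-\delta)\times(0,x_{t})$) gives the limit $\frac{\overline{\rho}}{\Gamma(\rho)\Gamma(\overline{\rho})}\int_{\delta}^{1-\delta}\!\int_{0}^{x_{t}}(\cdots)$. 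Letting $\delta\downarrow0$ and substituting $\xi\leftrightarrow x_{t}-\xi$ turns the full double integral into exactly the expression for $\tilde{h}_{x_{t}}(1)$ supplied by Lemma \ref{o}.

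It then remains to see that the two outer blocks vanish as $\delta\downarrow0$. The block $s\in(0,\delta t)$ is immediate: there $\theta(t-s,y)\le\theta(t-s,0)=h_{0}(t-s)\le Ct^{-1}\underline{n}(\zeta>t)$ (Proposition \ref{Z}) and $\int_{0}^{x}\overline{n}_{s}(x-dy)=\overline{n}_{s}([0,x])\le\overline{n}(\zeta>s)$, so the block is at most $C\underline{n}(\zeta>t)\int_{0}^{\delta t}\overline{n}(\zeta>s)\,ds$, which by Karamata and (\ref{equivtails}) is $O(\delta^{1-\rho})$ uniformly in $x$. The block $s\in((1-\delta)t,t)$, i.e. $t\int_{0}^{\delta t}du\int_{0}^{x}\overline{n}_{t-u}(x-dy)\theta(u,y)$, is the crux. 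I would split the $u$-integral at a slowly growing cutoff $u_{0}(t)=t^{\beta}$ for suitably small $\beta>0$. On $u<u_{0}(t)$ bound $\overline{n}_{t-u}(x-dy)$ by the uniform density bound $C\overline{n}(\zeta>t)/c(t)$ coming from Corollary \ref{xx} for $-X$, and split the $y$-integral at a fixed $y_{0}$: near $0$ use $\int_{0}^{y_{0}}\theta(u,y)\,dy\le\int_{0}^{y_{0}}\overline{\mu}^{\ast}(y)\,dy=A^{\ast}(y_{0})<\infty$, and for $y\ge y_{0}$ use $\theta(u,y)\le\overline{\Pi}^{\ast}(y)\underline{n}(\zeta>u)$; combining $\int_{0}^{x}\overline{\mu}^{\ast}=A^{\ast}(x)\in RV(1-\alpha\overline{\rho})$, the relation $A^{\ast}(c(t))\asymp c(t)/U^{\ast}(c(t))$ with Lemma \ref{L3}, the scaling $\overline{\Pi}^{\ast}(vc(t))\sim k^{\ast}v^{-\alpha}/t$, and---decisively in this regime---the fact that $\alpha\overline{\rho}<1$ makes $t\overline{n}(\zeta>t)/c(t)$ regularly varying of negative index, hence $\to0$, shows this $u<u_{0}(t)$ contribution tends to $0$. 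On $u\in(u_{0}(t),\delta t)$ both times again tend to infinity, so the same pointwise-limit plus dominated-convergence argument as on the central block applies and produces the $r\in(1-\delta,1)$ part of the integral above; this vanishes as $\delta\downarrow0$ since $\int^{1}(1-r)^{\min(\eta,1)-1-\overline{\rho}}\,dr<\infty$, the finiteness of this last integral being exactly where $\alpha\overline{\rho}<1$ (equivalently $\overline{\rho}<1/\alpha$) enters once more. Assembling the three blocks and the boundary terms yields $th_{x}(t)\to\tilde{h}_{x_{t}}(1)=p\tilde{h}_{x_{t}}(1)$, uniformly for $x_{t}\in[D^{-1},D]$. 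The main obstacle throughout is precisely this $s\to t$ endpoint: making its estimates simultaneously uniform over $x_{t}\in[D^{-1},D]$ and summable to $o(1)$ as $\delta\downarrow0$ is the delicate point, and it runs along (and extends) the lines of the $I_{2},I_{3}$ estimates in the proofs of Theorems \ref{E} and \ref{I}.
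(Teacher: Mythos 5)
Your proposal is correct and its skeleton coincides with the paper's: start from (\ref{x}), kill the boundary terms $a\theta(t,x)$ and $a^{\ast}\chi(t,x)$ (the latter via the last assertion of Lemma \ref{drift}), cut the time integral at $\delta t$ and $(1-\delta)t$, show the two outer blocks are negligible, and identify the central block with $\tilde{h}_{x_{t}}(1)$ through Propositions \ref{Z} and \ref{F} and Lemma \ref{o}. The one place where you genuinely diverge is the delicate block $s\in((1-\delta)t,t]$ (the paper's $I_{3}$). The paper splits this block \emph{spatially} at $\gamma c(t)$: for $y<\gamma c(t)$ a Vigon-type computation gives $tI_{3}^{1}\leq C\gamma^{1-\alpha\overline{\rho}}$, and for $y>\gamma c(t)$ it bounds $\theta(t-s,y)\leq\overline{\Pi}^{\ast}(\gamma c(t))\underline{n}(\zeta>t-s)$ and recognises $\int_{(1-\delta)t}^{t}ds\int\overline{n}_{s}(x-dy)\underline{n}(\zeta>t-s)$ as $\mathbb{P}(G_{t}\geq(1-\delta)t)$, which the arcsine law makes small; the iterated limits $\delta\downarrow0$ then $\gamma\downarrow0$ finish it. You instead cut \emph{temporally} at $t^{\beta}$ and spatially at a fixed $y_{0}$, using the integrated Vigon identity $\int_{0}^{\infty}\theta(u,y)\,du\leq\overline{\mu}^{\ast}(y)$ together with the observation that $t\overline{n}(\zeta>t)/c(t)\in RV(\overline{\rho}-\eta)$ has negative index precisely because $\alpha\overline{\rho}<1$. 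Both routes work and both invoke $\alpha\overline{\rho}<1$ at the same pressure point; the paper's arcsine trick is slicker and avoids Potter-type bounds for the regime $u/t\to0$, while yours is more computational but dispenses with the second auxiliary parameter $\gamma$. Two small points to tighten if you write it out: the inequality $\int_{0}^{y_{0}}\theta(u,y)\,dy\leq A^{\ast}(y_{0})$ is only valid after also integrating in $u$ over $(0,\infty)$ (it is not a pointwise bound in $u$), and on the central block the ``dominated convergence'' step should go through the unit-scale discretisation of $\overline{n}_{s}(x-dy)$ as in the paper, since Proposition \ref{F} controls interval masses uniformly in $\Delta$ rather than densities.
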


\begin{proof}
Recall again that $p=1$ in this situation. We use the same decomposition as
in the proof of Theorem \ref{E}. Then%
\begin{eqnarray*}
I_{1}+a\theta (t,x) &=&\int_{0}^{\delta t}ds\int_{0}^{x}\overline{n}%
_{s}(x-dy)\theta (t-s,y)+a\theta (t,x) \\
&=&\int_{0}^{\delta t}\int_{(0,x]}W^{\ast }(ds,x-dy)\theta (t-s,y) \\
&\leq &\int_{0}^{\delta t}W^{\ast }(ds,[0,\infty ))\theta (t-s,0) \\
&\leq &Ch_{0}((1-\delta )t)V^{\ast }(\delta t)\backsim C\delta ^{\overline{%
\rho }}t^{-1}\underline{n}(\zeta >t)V^{\ast }(t) \\
&\backsim &C\delta ^{\overline{\rho }}t^{-1}.
\end{eqnarray*}%
(Recall that $V^{\ast }$ is the potential function in the decreasing ladder
time process.) Next, take $0<\gamma <D^{-1},$and write $%
I_{3}=I_{3}^{1}+I_{3}^{2},$ where%
\begin{eqnarray*}
I_{3}^{1} &=&\int_{(1-\delta )t}^{t}ds\int_{0}^{\gamma c(t)}\overline{n}%
_{s}(x-dy)\theta (t-s,y) \\
&=&\int_{(1-\delta )t}^{t}{ds}\int_{0}^{\gamma
c(t)}{\overline{n}_{s}(x-dy)}\int_{0}^{\infty }\underline{n}_{t-s}(du)\overline{\Pi }^{\ast }(y+u) \\
&=&\int_{(1-\delta )t}^{t}{ds}\int_{0}^{\infty }\underline{n}_{t-s}(du)%
\int_{0}^{\gamma c(t)}\overline{n}_{s}(x-dy)\int_{y+u}^{\infty }\Pi ^{\ast
}(dw) \\
&=&\int_{(1-\delta )t}^{t}{ds}\int_{0}^{\infty }\underline{n}_{t-s}(du)%
\int_{u}^{\infty }\Pi ^{\ast }(dw)\int_{0}^{\gamma c(t)\wedge (w-u)}%
\overline{n}_{s}(x-dy).
\end{eqnarray*}%
From Corollary \ref{xx} we see that for all $\gamma >0$ and all $s\geq
(1-\delta )t$ and all sufficiently large $t,$%
\begin{equation*}
\int_{0}^{\gamma c(t)\wedge (w-u)}\overline{n}_{s}(x-dy)\leq \frac{C%
\overline{n}(\zeta >t)\int_{0}^{\gamma c(t)\wedge (w-u)}dy}{c(t)},
\end{equation*}%
and hence%
\begin{eqnarray*}
\int_{u}^{\infty }\Pi ^{\ast }(dw)\int_{0}^{\gamma c(t)\wedge (w-u)}%
\overline{n}_{s}(x-dy) &\leq &\frac{C\overline{n}(\zeta >t)\int_{u}^{\infty
}\Pi ^{\ast }(dw)\int_{0}^{\gamma c(t)\wedge (w-u)}dy}{c(t)} \\
&=&\frac{C\overline{n}(\zeta >t)\int_{0}^{\gamma c(t)}dy\overline{\Pi }%
^{\ast }(u+y)}{c(t)}.
\end{eqnarray*}%
Thus%
\begin{eqnarray*}
c(t)I_{3}^{1} &\leq &C\overline{n}(\zeta >t)\int_{0}^{\delta
t}ds\int_{0}^{\infty }\underline{n}_{s}(du)\int_{0}^{\gamma c(t)}\overline{%
\Pi }^{\ast }(u+y)dy \\
&=&C\overline{n}(\zeta >t)\int_{0}^{\delta t}\int_{0}^{\infty
}W(ds,du)\int_{0}^{\gamma c(t)}\overline{\Pi }^{\ast }(u+y)dy \\
&\leq &C\overline{n}(\zeta >t)\int_{0}^{\infty }U(du)\int_{0}^{\gamma c(t)}%
\overline{\Pi }^{\ast }(u+y)dy \\
&=&C\overline{n}(\zeta >t)\int_{z=0}^{\gamma c(t)}\overline{\mu }^{\ast
}(z)dz\backsim C\overline{n}(\zeta >t)\gamma c(t)\overline{\mu }^{\ast
}(\gamma c(t)) \\
&\backsim &\frac{C\gamma \overline{n}(\zeta >t)c(t)}{U^{\ast }(\gamma c(t))}%
\backsim \frac{C\gamma ^{1-\alpha \overline{\rho }}\overline{n}(\zeta >t)c(t)%
}{U^{\ast }(c(t))} \\
&\backsim &C\gamma ^{1-\alpha \overline{\rho }}\overline{n}(\zeta >t)c(t)%
\underline{n}(\zeta >t)\backsim C\gamma ^{1-\alpha \overline{\rho }}c(t){t^{-1}}.
\end{eqnarray*}%
Thus $\lim_{\gamma \rightarrow 0}\lim \sup tI_{3}^{1}=0.$ Also%
\begin{eqnarray*}
I_{3}^{2} &=&\int_{(1-\delta )t}^{t}ds\int_{\gamma c(t)}^{x}\overline{n}%
_{s}(x-dy)\underline{n}(\overline{\Pi }^{\ast }(y+\epsilon _{t-s}),t-s<\zeta
) \\
&\leq &\overline{\Pi }^{\ast }(\gamma c(t))\int_{(1-\delta
)t}^{t}ds\int_{\gamma c(t)}^{x}\overline{n}_{s}(x-dy)\underline{n}(\zeta
>t-s) \\
&\leq &\overline{\Pi }^{\ast }(\gamma c(t)){\mathbb{P}}(G_{t}\geq (1-\delta )t),
\end{eqnarray*}%
where $G_{t}$, the time of the last zero of $X-I$ before $t,$ has the
property that $t^{-1}G_{t}$ has a limiting arc-sine distribution of index $%
\overline{\rho }$. (See Theorem 14, p 169 of \cite{bertoinbook}.) It follows
that for each fixed $\gamma >0,$ we have $\lim_{\delta \rightarrow 0}\lim
\sup_{t\rightarrow \infty }tI_{3}^{2}=0,$ and hence $\lim_{\delta
\rightarrow 0}\lim \sup_{t\rightarrow \infty }t(I_{1}+I_{3})=0,$ uniformly
in $x.$ The term $a^{\ast }\chi (t,x)$ is $o(t^{-1})$ by Lemma \ref{drift}.
Using the bounds
\begin{eqnarray*}
tI_{2} &\geq& t\int_{\delta t}^{(1-\delta )t}ds\sum_{0}^{[x]}\overline{n}%
_{s}((r,r+1])\theta (t-s,(x-r)) \\
tI_{2}&\leq& t\int_{\delta t}^{(1-\delta )t}ds\sum_{0}^{[x]}\overline{n}%
_{s}((r,r+1])\theta (t-s,(x-r-1)^{+})
\end{eqnarray*}%
and Propositions \ref{Z} and \ref{F}, for any $\delta >0,$ we can estimate $%
tI_{2}$ by 
\begin{equation*}
k_{3}\overline{\rho }\int_{\delta t}^{(1-\delta )t}ds\sum_{0}^{[x]}\frac{%
{g}^{\ast }(r/c(s)\overline{n}(\zeta >s)\underline{n}(\zeta >t-s)\phi
((x-r-1)^{+}/c\left( t-s\right) )}{c(s)\overline{n}(\zeta >t)\underline{n}%
(\zeta >t)(t-s)}(1+o(1)),
\end{equation*}%
where the error term is uniform in $x.$ Putting $r=c(t)z$ and $s=tu$ we get
the uniform estimate
\begin{eqnarray*}
&&k_{3}\overline{\rho }\int_{\delta }^{1-\delta }\int_{0}^{x_{t}}{g}^{\ast
}(zu^{-\eta })u^{-(\eta +\rho )}\phi ((x_{t}-z)(1-u)^{-\eta })(1-u)^{-1-%
\overline{\rho }}dudz+o(1) \\
&&:=I(\delta ,x_{t})+o(1).
\end{eqnarray*}%
Next, we show that, as $\delta \rightarrow 0,$ $I(\delta ,w)=I(0,w)+o(1),$
uniformly in $w.$ First, since $\phi $ is bounded, for small $\delta $%
\begin{eqnarray*}
&&\int_{0}^{\delta }\int_{0}^{w}{g}^{\ast }(zu^{-\eta })u^{-(\eta +\rho )}\phi
((w-z)(1-u)^{-\eta })(1-u)^{-(2-\rho )}dudz \\
&\leq &C\int_{0}^{\delta }\int_{0}^{w}{g}^{\ast }(zu^{-\eta })u^{-(\eta +\rho
)}dudz=c\int_{0}^{\delta }\int_{0}^{wu^{-\eta }}{g}^{\ast }(y)u^{-\rho }dudy \\
&\leq &C\int_{0}^{\delta }\int_{0}^{\infty }{g}^{\ast }(y)u^{-\rho
}dudy\rightarrow 0\text{ as }\delta \rightarrow 0.
\end{eqnarray*}%
Also ${g}^{\ast }$ is bounded, so the same argument shows that the
contribution from $(1-\delta ,1)$ is bounded above by $C\int_{0}^{\delta
}\int_{0}^{Du^{-\eta }}\phi (z)u^{\eta +\rho -2}dudz.$ By considering
separately the cases $\alpha <1,\alpha =1,$ and $\alpha >1,$ it is easy to
check that this is also finite and $\rightarrow 0$ as $\delta \rightarrow 0,$
and then the result follows from Lemma \ref{o}.
\end{proof}

\begin{theorem}
\label{H}If $X$ is asymptotically stable with $\alpha \overline{\rho }=1,$
then uniformly for $x_{t}\in \lbrack D^{-1},D],$%
\begin{equation*}
h_{x}(t)=\frac{\overline{n}(\zeta >t)L(t)}{c(t)}({g}^{\ast }(x_{t})+o(1))\text{
as }t\rightarrow \infty .
\end{equation*}
\end{theorem}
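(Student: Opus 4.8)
The plan is to follow the pattern of the proofs of Theorems \ref{G} and \ref{I}, replacing the role of Proposition \ref{Z} (asymptotics of $\theta$) by Proposition \ref{K}, and that of the ``small $x$'' Proposition \ref{prop2} (asymptotics of $\overline n_t$) by Proposition \ref{prop1} applied to $-X$, since the regime is now $x_t\in[D^{-1},D]$ rather than $x_t\to0$. Concretely, start from the identity (\ref{x}),
\[
h_x(t)=\int_0^t ds\int_0^x\overline n_s(x-dy)\,\theta(t-s,y)+a\,\theta(t,x)+a^\ast\chi(t,x),
\]
fix $D$ and $x$ with $x_t\in[D^{-1},D]$, fix $\delta\in(0,1/2)$ and $\gamma\in(0,D^{-1})$, and write the time integral as $I_1+I_2+I_3$ over $(0,\delta t]$, $(\delta t,(1-\delta)t]$ and $((1-\delta)t,t]$. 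The drift terms are disposed of first: Proposition \ref{K} together with $t\overline\Pi^\ast(x)\to D^{-\alpha}k^\ast=0$ (which forces $\psi(x,t)=o(L(t))$, hence $\theta(t,x)=o(t^{-1}\underline n(\zeta>t))$) handles $a\,\theta(t,x)$, and the last assertion of Lemma \ref{drift} handles $a^\ast\chi(t,x)$, once one notes that $\overline n(\zeta>t)L(t)/c(t)\asymp t^{-1}$ (combine (\ref{y}), the relation $\underline n^d(\zeta>t)\sim p\,\underline n(\zeta>t)$, and (\ref{equivtails})).

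Next I would show $I_1$ and $I_2$ are negligible. For $I_1$, bound $\theta(t-s,y)\le\theta(t-s,0)=h_0(t-s)\sim f(0)L(t-s)/((t-s)c(t-s))$ (Proposition \ref{K}), use $\overline n_s((0,x])\le\overline n(\zeta>s)$ and $\int_0^{\delta t}\overline n(\zeta>s)\,ds\sim\delta^{\overline\rho}\overline\rho^{-1}t\overline n(\zeta>t)$ (Karamata, since $V^\ast[0,t)=a+\int_0^t\overline n(\zeta>s)ds\in RV(\overline\rho)$), to get $I_1\le C\delta^{\overline\rho}(1-\delta)^{-1-\eta}\overline n(\zeta>t)L(t)/c(t)$, which is $o(\overline n(\zeta>t)L(t)/c(t))$ after letting $\delta\downarrow0$. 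For $I_2$ (the range $s\asymp t\asymp t-s$), split the inner integral at $y=\epsilon c(t)$: on $\{y\le\epsilon c(t)\}$ use Corollary \ref{xx} for $-X$ to bound $\overline n_s((x-\epsilon c(t),x])\le C\epsilon\,\overline n(\zeta>s)$ and $\theta(t-s,y)\le C h_0(t-s)$, leaving after the $s$-integration a factor $O(\epsilon\delta^{-\eta})$; on $\{y>\epsilon c(t)\}$ use $\theta(t-s,y)\le\overline\Pi^\ast(\epsilon c(t))\underline n(\zeta>t-s)$ with $t\overline\Pi^\ast(\epsilon c(t))\to\epsilon^{-\alpha}k^\ast=0$, so this part is $o(t^{-1})$ for fixed $\delta,\epsilon$. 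Sending $t\to\infty$, then $\epsilon\downarrow0$, then $\delta\downarrow0$ kills $I_2$.

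The main term comes from $I_3$, handled together with $a^\ast\chi(t,x)$. After the substitution $u=t-s$, $I_3=\int_0^{\delta t}du\int_0^x\overline n_{t-u}(x-dy)\theta(u,y)$; split the inner integral at $y=\gamma c(t)$. On $\{y\le\gamma c(t)\}$ the displacement $x-y$ is $\asymp c(t)\asymp c(t-u)$, so Proposition \ref{prop1} for $-X$, the uniform continuity of $g^\ast$, and the regular variation of $\overline n(\zeta>\cdot)$ and $c(\cdot)$ give, uniformly for such $y$, for $u\le\delta t$, and for $x_t\in[D^{-1},D]$,
\[
\overline n_{t-u}(x-dy)=\Bigl(\tfrac{\overline n(\zeta>t)}{c(t)}g^\ast(x_t)+O(\delta)+O(\gamma)+o(1)\Bigr)dy ,
\]
so this part of $I_3$ equals $\bigl(\tfrac{\overline n(\zeta>t)}{c(t)}g^\ast(x_t)+O(\delta)+O(\gamma)+o(1)\bigr)\int_0^{\gamma c(t)}dy\int_0^{\delta t}\theta(u,y)\,du$. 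The double integral is evaluated via $\int_0^\infty\theta(u,y)\,du=\overline\mu^\ast(y)-a^\ast\overline\Pi^\ast(y)$ (from $U(dz)=a^\ast\delta_0(dz)+\int_0^\infty\underline n_u(dz)du$, i.e. (\ref{U}), and Vigon's identity), the uniform-in-$y$ tail bound $\int_{\delta t}^\infty\theta(u,y)du\le C\int_{\delta t}^\infty L(u)(uc(u))^{-1}du\asymp L(t)(\delta^\eta c(t))^{-1}$, and the facts $\int_0^{\gamma c(t)}\overline\mu^\ast(y)dy=A^\ast(\gamma c(t))\sim A^\ast(c(t))$ ($A^\ast\in RV(0)$) and $A^\ast(c(t))\sim B(c(t))\sim L(t)$ (Lemma \ref{L}); the $a^\ast\overline\Pi^\ast$ correction is either $o(L(t))$ (when $\mathbb{E}H_1^\ast=\infty$, since then $\int_0^x\overline\Pi^\ast=o(B(x))$ and $p=1$) or is cancelled by $a^\ast\chi(t,x)$ exactly as in the proof of Theorem \ref{I}, using the constant from Remark \ref{EH}. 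The part $\{y>\gamma c(t)\}$ of $I_3$ is $o(t^{-1})$ for each fixed $\gamma$ (there $\theta(u,y)\le\overline\Pi^\ast(\gamma c(t))\underline n(\zeta>u)$, $t\overline\Pi^\ast(\gamma c(t))\to0$, and $\int_0^{\delta t}\overline n(\zeta>t-u)\underline n(\zeta>u)du=O(1)$ by (\ref{equivtails}) and Karamata); letting $\gamma\downarrow0$ at the end recovers the full constant, giving $h_x(t)\sim\overline n(\zeta>t)L(t)c(t)^{-1}g^\ast(x_t)$.

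I expect the main obstacle to be the borderline nature of the case $\alpha\overline\rho=1$: in the proofs of Theorems \ref{G} and \ref{E} the intermediate and tail contributions are eliminated by gaining a power $\gamma^{1-\alpha\overline\rho}$ or $\delta^{1-\alpha\overline\rho}$ of a small parameter, but that exponent now degenerates to $0$, so one must instead rely on the sharp equivalence $B\sim A^\ast$ of Lemma \ref{L} (so the inner truncation at $\delta_tc(t)$ is free and $L(t)\sim A^\ast(c(t))$), on the slow variation of $A^\ast$ (so the outer truncation at $\gamma c(t)$ is free), and on the delicate pairing of the $a^\ast\overline\Pi^\ast$ term with $a^\ast\chi(t,x)$ already used for the small-deviation case in Theorem \ref{I}. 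A secondary, routine, point is carrying all of this uniformly over $x_t\in[D^{-1},D]$, for which the uniformity in Proposition \ref{prop1}, the uniform continuity of $g^\ast$, and the uniform decay $\sup_{x_t\ge D^{-1}}t\overline\Pi^\ast(x)\to0$ are used throughout.
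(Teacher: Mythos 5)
Your proposal is correct and its core is the same as the paper's: both start from (\ref{x}), locate the main contribution at $s$ close to $t$ and $y$ small relative to $c(t)$, estimate $\overline{n}_{t-s}(x-dy)$ there by Proposition \ref{prop1} applied to $-X$, evaluate $\int_{0}^{\infty}\theta(u,y)\,du=\overline{\mu}^{\ast}(y)-a^{\ast}\overline{\Pi}^{\ast}(y)$ via (\ref{U}) and Vigon's identity, identify the constant through (\ref{y}) and Lemma \ref{L}, and dispose of the drift terms with Lemma \ref{drift}. Where you differ is in the decomposition. The paper cuts the $y$-variable at the single intermediate scale $\Delta_{t}=\delta_{t}c(t)$ (with $\delta_{t}\downarrow 0$, $\delta_{t}c(t)\to\infty$), which buys two things for free: on $\{y\le\Delta_{t}\}$ one has $g^{\ast}((x-y)/c(t-s))\to g^{\ast}(x_{t})$ automatically, and $\int_{0}^{\Delta_{t}}\overline{\mu}^{\ast}=A^{\ast}(\Delta_{t})\sim L(t)$ essentially by the construction of $L$; the negligible blocks $J_{1},J_{2},J_{3}$ are then killed by $U^{\ast}(\Delta_{t})=o(1/\underline{n}(\zeta>t))$ and the slow variation of $A^{\ast}$. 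You instead cut at fixed proportions $\gamma c(t)$ and $\epsilon c(t)$ and take iterated limits, recovering the same two facts from the uniform continuity of $g^{\ast}$ and the slow variation of $A^{\ast}$ (via Lemma \ref{L}); your bounds for $I_{1}$ (through $h_{0}$ and $\int_{0}^{\delta t}\overline{n}(\zeta>s)\,ds\sim\delta^{\overline{\rho}}\overline{\rho}^{-1}t\,\overline{n}(\zeta>t)$) and for the middle range $I_{2}$ are different from, but no less valid than, the paper's. The one point that needs to be made explicit is the order of the limits: your tail correction $\int_{0}^{\gamma c(t)}dy\int_{\delta t}^{\infty}\theta(u,y)\,du=O(\gamma\delta^{-\eta})L(t)$ is only negligible if $\gamma\downarrow 0$ is taken \emph{before} $\delta\downarrow 0$, and since none of your other estimates force the opposite order the sandwich does close, but this should be stated. (Your explicit attention to the $a^{\ast}\overline{\Pi}^{\ast}$ correction is, if anything, more careful than the paper's own proof, which silently uses $\int_{0}^{\infty}\theta(s,y)\,ds=\overline{\mu}^{\ast}(y)$ in the $J_{4}$ computation.)
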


\begin{proof}
Notice that, by (\ref{y}) and Remark \ref{EH}%
\begin{equation}\label{eq:53}
\frac{t\overline{n}(\zeta >t)L(t)}{c(t)}\backsim \overline{\rho }f(0)t%
\overline{n}(\zeta >t)\underline{n}^{d}(\zeta >t)\rightarrow pk_{3}\overline{%
\rho }f(0):=k_{6},
\end{equation}%
so we will prove that $th_{x}(t)=k_{6}{g}^{\ast }(x_{t})+o(1).$ This time we
write%
\begin{eqnarray*}
h_{x}(t) &=&\int_{0}^{t}\int_{0}^{x}\overline{n}_{s}(x-dy)\theta
(t-s,y)+a\theta (t,x)+a^{\ast }\chi (t,x) \\
&=&\sum_{1}^{4}J_{r}+a\theta (t,x)+a^{\ast }\chi (t,x),
\end{eqnarray*}%
where 
\begin{eqnarray*}
J_{1} &=&\int_{0}^{\delta t}\int_{\Delta _{t}}^{x}\overline{n}%
_{s}(x-dy)\theta (t-s,y) \\
&\leq &\int_{0}^{\delta t}\int_{\Delta _{t}}^{x}\overline{n}_{s}(x-dy)\theta
(t-s,0) \\
&\leq &\frac{C\underline{n}(\zeta >(1-\delta t))}{(1-\delta )t}%
\int_{0}^{\delta t}\int_{\Delta _{t}}^{x}W^{\ast }(ds,x-dy) \\
&\leq &\frac{C\underline{n}(\zeta >(1-\delta t))U^{\ast }(\Delta _{t})}{%
(1-\delta )t},
\end{eqnarray*} where $\Delta_{t}=c(t)\delta_{t}$ and $\delta_{t}$ has been defined before Proposition \ref{K}. 
Since $U^{\ast }\in RV(1)$ we see that $U^{\ast }(\Delta _{t})=o(U^{\ast
}(c(t))=o((\underline{n}(\zeta >t))^{-1}),$ so $\lim_{t\rightarrow \infty
}tJ_{1}=0$ for any fixed $\delta >0.$ Next, we can use Proposition \ref%
{prop1} and the usual approximation procedure to see that 
\begin{eqnarray*}
tJ_{2} &=&\int_{\delta t}^{t}\int_{\Delta _{t}}^{x}\overline{n}%
_{s}(x-dy)\theta (t-s,y)ds \\
&\backsim &t\int_{0}^{(1-\delta )t}\int_{\Delta _{t}}^{x}\frac{\overline{n}%
(\zeta >t-s){g}^{\ast }((x-y)/c(t-s))\theta (s,y)}{c(t-s)}dyds \\
&\leq &\frac{Ct\overline{n}(\zeta >(1-\delta )t)}{\delta c(\delta t)}%
\int_{0}^{\infty }\int_{\Delta _{t}}^{x}\theta (s,y)dyds \\
&\leq &\frac{Ct\overline{n}(\zeta >(1-\delta )t)}{c(\delta t)}\int_{\Delta
_{t}}^{Dc(t)}\overline{n}(O>y)dy \\
&\backsim &\frac{C(A^{\ast }(Dc(t))-A^{\ast }(\Delta _{t}))}{c(t)\underline{n%
}(\zeta >t)} \\
&\backsim &{\frac{CU^{\ast}(c(t))(A^{\ast }(Dc(t))-A^{\ast }(\Delta _{t}))}{\delta ^{\rho}(1-\delta )^{\eta }c(t)}}\\
&\backsim &\frac{C(A^{\ast }(Dc(t))-A^{\ast }(\Delta _{t}))}{\delta ^{\rho
}(1-\delta )^{\eta }{A^{\ast }(c(t))}}\rightarrow 0,
\end{eqnarray*} %
again for any fixed $\delta >0.$ (In the final step we have used (\ref{y})
and Lemma \ref{L}.) Also%
\begin{eqnarray*}
tJ_{3} &=&t\int_{0}^{(1-\delta )t}\int_{0}^{\Delta _{t}}\overline{n}%
_{s}(x-dy)\theta (t-s,y)ds \\
&\leq &t\int_{0}^{(1-\delta )t}\int_{0}^{\Delta _{t}}W^{\ast
}(ds,x-dy)\theta (t-s,0) \\
&\leq &Cth_{0}(\delta t)\int_{0}^{\infty }\int_{0}^{\Delta _{t}}W^{\ast
}(ds,x-dy) \\
&\backsim &C\delta ^{-(1+\overline{\rho })}\underline{n}(\zeta >t)(U^{\ast
}(x)-U^{\ast }(x-\Delta _{t}))\\
&\leq& C\delta ^{-(1+\overline{\rho })}%
\underline{n}(\zeta >t)U^{\ast }(\Delta _{t}) \\
&\backsim &C\delta ^{-(1+\overline{\rho })}\frac{C\delta ^{-(1+\overline{%
\rho })}U^{\ast }(\delta _{t}c(t))}{U^{\ast }(c(t))}\backsim C\delta ^{-(1+%
\overline{\rho })}\delta _{t}\rightarrow 0.
\end{eqnarray*}
Finally, arguing as for $J_{2}$ gives%
\begin{eqnarray*}
tJ_{4} &=&t\int_{(1-\delta )t}^{t}\int_{0}^{\Delta _{t}}\overline{n}%
_{s}(x-dy)\theta (t-s,y)ds \\
&\backsim &t\int_{0}^{\delta t}\int_{0}^{\Delta _{t}}\frac{\overline{n}(\zeta
>t-s){g}^{\ast }((x-y)/c(t-s))\theta (s,y)}{c(t-s)}dyds \\
&\backsim &t{g}^{\ast }(x_{t})\int_{0}^{\delta t}\int_{0}^{\Delta _{t}}\frac{%
\overline{n}(\zeta >t-s)\theta (s,y)}{c(t-s)}dyds.
\end{eqnarray*}%
An upper bound for the integral here is 
\begin{eqnarray*}
&&\frac{\overline{n}(\zeta >(1-\delta )t)}{c((1-\delta )t)}\int_{0}^{\infty
}\int_{0}^{\Delta _{t}}\theta (s,y)dyds \\
&=&\frac{\overline{n}(\zeta >(1-\delta )t)}{c((1-\delta )t)}\int_{0}^{\Delta
_{t}}\overline{\mu }^{\ast }(y)dy\backsim \frac{\overline{n}(\zeta >t)L(t)}{%
(1-\delta )c(t)}.
\end{eqnarray*}%
An asymptotic lower bound is%
\begin{equation*}
\frac{\overline{n}(\zeta >t)}{c(t)}\left( \int_{0}^{\Delta _{t}}\overline{\mu }%
^{\ast }(y)dy-\int_{\delta t}^{\infty }\int_{0}^{\Delta _{t}}\theta
(s,y)dyds\right) ,
\end{equation*}%
and since 
\begin{equation*}
\begin{split}
\int_{\delta t}^{\infty }\int_{0}^{\Delta _{t}}\theta (s,y)dyds &\leq
\int_{\delta t}^{\infty }\int_{0}^{\Delta _{t}}\theta (s,0)dyds \\
&=\Delta _{t}\int_{\delta t}^{\infty }h_{0}(s)ds\\&= \delta _{t}c(t)\underline{n}^{d}(\zeta >\delta t)\\&\backsim (\overline{\rho }f(0))^{-1}\delta ^{-\overline{\rho }}\delta _{t}L(t),
\end{split}
\end{equation*}%
it follows that $\lim_{\delta \rightarrow 0,t\rightarrow \infty }\frac{tJ_{4}%
}{{g}^{\ast }(x_{t})}=k_{6},$ uniformly for $x_{t}\in \lbrack D^{-1},D].$ The
result follows, using Lemma \ref{drift} to estimate $\chi(t,x)$
\end{proof}

\begin{corollary}
Whenever $\Pi ((-\infty ,0))>0$ we have%
\begin{equation}
th_{x}(t)=p\tilde{h}_{x_{t}}(1)+o(1)\text{ as }t\rightarrow \infty ,
\label{z}
\end{equation}%
and in all cases (\ref{1.6}) of Theorem \ref{thm2} holds.
\end{corollary}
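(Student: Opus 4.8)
The plan is to split, via Lemma~\ref{lem1}, the first passage probability into its discontinuous and continuous parts,
\[
\mathbb{P}_{x}(T_{0}\in (t,t+\Delta ])=\int_{t}^{t+\Delta }h_{x}(s)\,ds+\mathbb{P}_{x}^{c}(T_{0}\in (t,t+\Delta ]),
\]
to read off (\ref{z}) from the first term, and to add the second back in for (\ref{1.6}). By the remark following Theorem~\ref{thm2} and its analogue for (\ref{z}) it is enough to treat the ranges $x_{t}\in [D^{-1},D]$ and $x_{t}\rightarrow 0$, together with the observation that both sides of (\ref{z}) and of (\ref{1.6}) are $o(1)$ when $x_{t}\rightarrow \infty $: there $\widetilde{h}_{x_{t}}(1)\rightarrow 0$ by the known behaviour of the stable first passage density; $t\mathbb{P}_{x}^{c}(T_{0}\in (t,t+\Delta ])/\Delta \rightarrow 0$ since $g^{\ast }(x_{t})\rightarrow 0$ and the prefactor in (\ref{8}) stays bounded (Lemma~\ref{L3}, Remark~\ref{EH}); and the left-hand side of (\ref{z}) is $o(1)$ by a standard estimate expressing that, from a starting point $x\gg c(t)$, a crossing of $0$ within $(t,t+\Delta ]$ essentially requires a negative jump of size of order $x$, so that it is $O\!\left(t\,\overline{\Pi }^{\ast }(x/2)\right)=o(1)$ (using the regular variation of $\overline{\Pi }^{\ast }$ when $\alpha <2$, and $t\,\overline{\Pi }^{\ast }(c(t))\rightarrow 0$ when $\alpha =2$).

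For (\ref{z}), suppose first $\alpha \overline{\rho }<1$, so $q=0$ and $p=1$. Theorem~\ref{G} gives $th_{x}(t)=\widetilde{h}_{x_{t}}(1)+o(1)$ uniformly for $x_{t}\in [D^{-1},D]$, and for $x_{t}\rightarrow 0$ Theorem~\ref{E} together with $U^{\ast }(x)\underline{n}(\zeta >t)\rightarrow 0$ (Lemma~\ref{L3}) gives $th_{x}(t)\rightarrow 0=\widetilde{h}_{x_{t}}(1)+o(1)$. Suppose now $\alpha \overline{\rho }=1$. Then Theorem~\ref{H}, combined with (\ref{y}), the relation $\underline{n}^{d}(\zeta >t)\backsim p\,\underline{n}(\zeta >t)$ and (\ref{equivtails}), shows that uniformly for $x_{t}\in [D^{-1},D]$ one has $th_{x}(t)\rightarrow p\,\overline{\rho }\,g^{\ast }(x_{t})/(\Gamma (\rho )\Gamma (\overline{\rho })f(0))$, and Theorem~\ref{I} again handles $x_{t}\rightarrow 0$; it remains to identify this limit with $p\,\widetilde{h}_{x_{t}}(1)$. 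For this one uses that $\alpha \overline{\rho }=1$ forces the limiting stable process $Y$ to be spectrally positive, hence to creep downwards, so that its downgoing ladder height process reduces to a pure drift; the duality identity (\ref{W}) applied to $Y$ (whose ladder time drift $a$ vanishes, $Y$ being regular downwards) then identifies $\overline{n}^{Y}(\epsilon _{t}\in dx,\zeta >t)\,dt$ with a fixed multiple of $\mathbb{P}_{0}(T_{-x}^{Y}\in dt)\,dx=\widetilde{h}_{x}(t)\,dt\,dx$, so $q_{1}^{\ast }(z)$ is a fixed multiple of $\widetilde{h}_{z}(1)$. Combining this with the relation $g^{\ast }(z)=\Gamma (\overline{\rho })q_{1}^{\ast }(z)$ from the proof of Theorem~\ref{R}, and with the classical value $f(0)=\overline{\rho }/\Gamma (\rho )$ of the density at $0$ of the spectrally positive $\alpha $-stable law (see the references in the Introduction), the constants cancel and the limit is $p\,\widetilde{h}_{x_{t}}(1)$. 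When $\Pi ((-\infty ,0))=0$ one has $h_{x}\equiv 0$ and, necessarily, $\alpha \overline{\rho }=1$ with $q=1$, $p=0$.

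For (\ref{1.6}) we add back the continuous part. If $\mathbb{E}H_{1}^{\ast }=\infty $ — in particular whenever $\alpha \overline{\rho }<1$ — then $q=0$ and, since $t\,\overline{n}(\zeta >t)/c(t)\in RV(\overline{\rho }-\eta )$ with $\overline{\rho }<\eta $, (\ref{8}) gives $t\mathbb{P}_{x}^{c}(T_{0}\in (t,t+\Delta ])/\Delta =o(1)$, so (\ref{1.6}) reduces to (\ref{z}) with $p=1$. If $\mathbb{E}H_{1}^{\ast }<\infty $ — possible only when $\alpha \overline{\rho }=1$, with $Y$ spectrally positive — then Lemma~\ref{L3} and Remark~\ref{EH} give $t\,\overline{n}(\zeta >t)/c(t)\rightarrow k_{3}\overline{\rho }/(f(0)\,\mathbb{E}H_{1}^{\ast })$, so by (\ref{8})
\[
\frac{t\mathbb{P}_{x}^{c}(T_{0}\in (t,t+\Delta ])}{\Delta }\longrightarrow \frac{d^{\ast }}{\mathbb{E}H_{1}^{\ast }}\cdot \frac{k_{3}\overline{\rho }}{f(0)}\,g^{\ast }(x_{t})=q\,\widetilde{h}_{x_{t}}(1),
\]
using $q=d^{\ast }/\mathbb{E}H_{1}^{\ast }$ and the identification $k_{3}\overline{\rho }\,g^{\ast }(z)/f(0)=\widetilde{h}_{z}(1)$ of the previous paragraph. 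Adding the discontinuous contribution, which is $p\,\widetilde{h}_{x_{t}}(1)$ by (\ref{z}) when $\Pi ((-\infty ,0))>0$ and is $0=p\,\widetilde{h}_{x_{t}}(1)$ when $\Pi ((-\infty ,0))=0$, we obtain $t\mathbb{P}_{x}(T_{0}\in (t,t+\Delta ])/\Delta \rightarrow (p+q)\widetilde{h}_{x_{t}}(1)=\widetilde{h}_{x_{t}}(1)$, uniformly in the relevant range of $x$, which is (\ref{1.6}).

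The main obstacle is the constant bookkeeping in the regime $\alpha \overline{\rho }=1$: one must check that the limiting constant produced by Theorem~\ref{H}, and — when $\mathbb{E}H_{1}^{\ast }<\infty $ — also the one produced by Theorem~\ref{S} for the continuous part, recombine exactly into $\widetilde{h}_{x_{t}}(1)$. This rests on the spectral positivity of $Y$ when $\alpha \overline{\rho }=1$ (which turns the relevant excursion entrance law $q_{1}^{\ast }$ into a fixed multiple of $\widetilde{h}_{z}(1)$ via (\ref{W}) and the pure-drift downgoing ladder), on the relation $g^{\ast }=\Gamma (\overline{\rho })q_{1}^{\ast }$ already proved in Theorem~\ref{R}, and on the explicit value of $f(0)$ for the spectrally positive stable law, which makes the multiplicative constants cancel. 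Everything else is a routine assembling of the uniform estimates of Theorems~\ref{E}, \ref{G}, \ref{H}, \ref{I} and~\ref{S} over the three ranges of $x_{t}$.
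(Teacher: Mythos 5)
Your overall assembly (Theorems \ref{E}, \ref{G}, \ref{I}, \ref{H} for the discontinuous part over the ranges $x_{t}\rightarrow 0$ and $x_{t}\in[D^{-1},D]$, plus Theorem \ref{S} and Remark \ref{EH} for the continuous part, with the observation that $q\,\widetilde{h}+p\,\widetilde{h}=\widetilde{h}$) matches the paper's. Where you diverge is at the crux: identifying the limiting constant in the case $\alpha\overline{\rho}=1$. The paper does \emph{not} compute it. It quotes Proposition 14 of \cite{rad} for the existence of a constant $k_{7}$ with $g^{\ast}(x)=k_{7}\widetilde{h}_{x}(1)$, writes the two contributions as $d^{\ast}\Delta k_{6}k_{7}\widetilde{h}_{x}(1)/L(t)$ and $\Delta k_{6}k_{7}\widetilde{h}_{x}(1)$, and then pins down $k_{6}k_{7}=p$ indirectly: the integrated version of the resulting local estimate must be consistent with the stable functional limit theorem, and any other value of $k_{6}k_{7}$ would contradict it. This argument is normalization-free by design.

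Your explicit route -- spectral positivity of $Y$ when $\alpha\overline{\rho}=1$, pure-drift downgoing ladder, the duality \eqref{W} giving $q_{1}^{\ast}(z)=\widetilde{h}_{z}(1)/d^{\ast}_{Y}$, the relation $g^{\ast}=\Gamma(\overline{\rho})q_{1}^{\ast}$ from the proof of Theorem \ref{R}, and $f(0)=\overline{\rho}/\Gamma(\rho)$ -- is attractive and, if completed, would make the corollary self-contained. But as written it has a genuine gap exactly where the difficulty lies: you say the duality identifies $\overline{n}^{Y}(\epsilon_{t}\in dx,\zeta>t)\,dt$ with ``a fixed multiple'' of $\widetilde{h}_{x}(t)\,dt\,dx$ and then assert that ``the constants cancel.'' That multiple is $1/d^{\ast}_{Y}$, where $d^{\ast}_{Y}$ is the drift of the downgoing ladder height process of $Y$ under the normalization inherited from \eqref{norm} through the convergence of ladder exponents in Lemma \ref{L3}; for your chain of identities to close you need precisely $d^{\ast}_{Y}\,\Gamma(\rho)f(0)=\overline{\rho}$, i.e.\ $d^{\ast}_{Y}=1$ given your value of $f(0)$. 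You never establish this, and it is not automatic: $d^{\ast}_{Y}$ depends on the normalization of the local time at the minimum of $X$, while $f(0)$ depends on the choice of the norming function $c$, and these are fixed by two \emph{a priori} unrelated conventions. (Note also that your value of $f(0)$ presumes a specific standardization of the limit law.) Until $d^{\ast}_{Y}$ is computed under the paper's conventions, the cancellation is asserted rather than proved; this is exactly the bookkeeping the paper's soft argument via the functional limit theorem is built to avoid, and you could either carry out that computation or simply substitute the paper's consistency argument at this point. Two smaller remarks: for \eqref{1.6} in the regime $\alpha\overline{\rho}=1$ with $\mathbb{E}H_{1}^{\ast}=\infty$, the index of regular variation of $t\overline{n}(\zeta>t)/c(t)$ is $0$, so your ``$\overline{\rho}<\eta$'' reason does not apply there and you must instead use $L(t)\rightarrow\infty$ via \eqref{eq:53}; and your big-jump bound for $th_{x}(t)$ when $x_{t}\rightarrow\infty$ needs care on the event that $X_{t}$ has returned close to $0$, where $\overline{\Pi}^{\ast}$ may blow up.
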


\begin{proof}
We have proved (\ref{z}) for the case $\alpha \overline{\rho }<1$ in Theorem %
\ref{G}, and in Proposition 14 of \cite{rad} it was shown that when $\alpha 
\overline{\rho }=1$ there is a constant $k_{7}$ such that ${g}^{\ast }(x)=k_{7}%
\tilde{h}_{x}(1),$ so in this case we need to check that $k_{6}k_{7}=p.$ But
we have, from Theorems \ref{S} and \ref{H},%
\begin{eqnarray*}
t\mathbb{P}_{x}^{c}(T &\in &(t,t+\Delta ])\backsim \frac{d^{\ast }\Delta
k_{6}k_{7}\tilde{h}_{x}(1)}{L(t)}, \\
t\mathbb{P}_{x}^{d}(T &\in &(t,t+\Delta ])\backsim \Delta k_{6}k_{7}\tilde{h}%
_{x}(1).
\end{eqnarray*}%
If $p=1,$ i.e. $d^{\ast }=0$ or $d^{\ast }>0$ and $L(\infty )=\infty ,$ this
gives $t\mathbb{P}_{x}(T\in (t,t+\Delta ])\backsim \Delta k_{6}k_{7}\tilde{h}%
_{x}(1),$ and this is easily seen to contradict the standard stable
functional limit theorem unless $k_{6}k_{7}=1.$ If $p=d^{\ast }/(d^{\ast
}+L(\infty ))<1$ we get $t\mathbb{P}_{x}(T\in (t,t+\Delta ])\backsim
p^{-1}\Delta k_{6}k_{7}\tilde{h}_{x}(1)$ and the same argument gives $%
k_{6}k_{7}=p,$ and the results follow.
\end{proof}

\end{document}